\numberwithin{equation}{section}
\newtheorem{definition}{Definition}[section]
\newtheorem{lemma}[definition]{Lemma}
\newtheorem{theorem}[definition]{Theorem}
\newtheorem{proposition}[definition]{Proposition}
\newtheorem{corollary}[definition]{Corollary}
\newtheorem{remarkth}[definition]{Remark}
\newtheorem{example}[definition]{Example}
\newenvironment{remark}{\begin{remarkth}\upshape}{\hfill$\diamond$\end{remarkth}}
\begin{document}

\title{Poisson and near-symplectic structures \\ on generalized wrinkled fibrations in dimension 6}

\author{P. Su\'arez-Serrato}

\address{Instituto de Matem\'aticas - Universidad Nacional Aut\'onoma de M\'exico\\Circuito Exterior, Ciudad Universitaria\\Coyoac\'an, 04510\\Mexico City\\Mexico}

\email{pablo@im.unam.mx}

\author{J. Torres Orozco}

\address{CIMAT, Guanajuato, Mexico }

\email{jonatan@cimat.mx}

\author{R. Vera}

\address{Department of Mathematics - The Pennsylvania State University, State College, PA, 16802, USA}
\email{rvera.math@gmail.com, rxv15@psu.edu}

\begin{abstract}
 We show that generalized broken fibrations in arbitrary dimensions admit rank-2 Poisson structures compatible with the fibration structure. After extending the notion of wrinkled fibration to dimension 6 we prove that these wrinkled fibrations also admit compatible rank-2 Poisson structures. In the cases with indefinite singularities we can provide these wrinkled fibrations in dimension 6 with near-symplectic structures. 
\end{abstract}

\subjclass[2010]{57R17, 53D17.}
\keywords{singular Poisson, near-symplectic, broken Lefschetz fibrations, wrinkled, singularity theory, stable maps, fold, cusp, swallowtail, butterfly}
\maketitle

\section{Introduction}
Since the seminal work of Donaldson, establishing a correspondence between Lefschetz pencils and symplectic 4-manifolds\cite{D99}, Lefschetz fibrations and its generalizations have played a significant role in symplectic geometry. These are maps to the 2-sphere with a finite number of isolated singular points where the rank of the derivative is zero. In 2005, Auroux, Donaldson, and Katzarkov generalized this approach, introducing what is now known as a {\em broken Lefschetz fibration} or {\em bLf} \cite{ADK05}. There is an additional component in the singularity set of bLfs, a 1-submanifold of indefinite folds. It was shown that a reciprocal geometric structure to bLfs is a near-symplectic form. The latter are closed 2-forms that are non-degenerate outside a collection of circles where they vanish, and they are known to exist on any 4-manifold with $b_{2}^{+} >0$.  Recently, near-symplectic structures and generalized broken Lefschetz fibrations have been studied in higher dimensions \cite{V16}. 
\medskip

From a singularity theory point of view bLfs are not stable. By a stable map it is understood one such that any nearby map in the space of smooth mappings can be perturbed to the original map after a change of coordinates in the domain and codomain. BLfs can be deformed to stable maps. Lekili showed that the unstable Lefschetz singularities of a bLf can be be substituted by cusps, leading to a stable map with only folds and cusps as elements of its critical set \cite{L09}. These mappings are known as {\em wrinkled fibrations}. Furthermore he showed that the near-symplectic structure is preserved under these deformations. 
\medskip 

Poisson geometry has newly entered the picture, particularly in dimension 4. A singular Poisson bivector of rank 2 that vanishes on the singularity set of a bLf and whose symplectic foliation matches the fibres of the map can be given \cite{GSV15}. This in turn implies that on any homotopy class of maps from a 4-manifold to $S^2$ there is such a singular Poisson structure. Similar structures also appear on wrinkled fibrations, where the local models of the Poisson bivectors and induced symplectic forms have been explicitly constructed \cite{ST16}. 
\medskip 

In this work we start by defining a generalization of wrinkled fibrations on dimension 6 based on singularity theory. We then construct Poisson and near-symplectic structures that match the singularities of the fibration and give their local models. Before presenting these constructions, in section \ref{sec:genBLF} we briefly recall the notion of a generalized broken Lefschetz fibration, which serves as a reference for the definition of generalized wrinkled fibrations. Our first observation appears after combining the definition of generalized bLf \cite{V16} together with the results of \cite{Dam89} and \cite{GSV15} on Poisson structures. 

 \begin{theorem}\label{gbLf-poisson}
 Let $M$ and $X$ be closed oriented smooth manifolds with $\dim(M) = 2n, \dim(X) = 2n-2$, and $f\colon M \rightarrow X$ a generalized broken Lefschetz fibration. Then there is a complete singular Poisson structure of rank 2 whose associated bivector vanishes on the singularity set of $f$. If none of its symplectic leaves are, or contain,  $2$--spheres, then this Poisson structure is integrable.
\end{theorem}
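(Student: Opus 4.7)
\emph{Proof plan.} The strategy is to build the bivector piecewise from the local normal forms of a generalized broken Lefschetz fibration recorded in \cite{V16}, combining the Poisson constructions of \cite{GSV15} in dimension $4$ with Damon's result \cite{Dam89} for holomorphic germs. Write $Z \subset M$ for the critical set of $f$. On $M \setminus Z$ the map $f$ is a submersion, so $\ker df$ is an integrable rank-$2$ distribution tangent to the fibres; the orientations of $M$ and $X$ orient this bundle, so that $\Lambda^2 \ker df$ is trivial on $M \setminus Z$ and admits a nowhere-vanishing section. Any such section is a Poisson bivector, because in rank $2$ the Jacobi identity $[\pi,\pi]=0$ is equivalent to the integrability of the characteristic distribution, and here that distribution is $\ker df$.

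Next I would produce local bivectors at the two strata of $Z$. Near an isolated Lefschetz-type point, \cite{V16} provides complex coordinates in which $f$ reads $(z_1, z_2, w_1, \ldots, w_{n-2}) \mapsto (z_1^2 + z_2^2, w_1, \ldots, w_{n-2})$. Applying the construction of Damon \cite{Dam89} to the holomorphic Morse germ $z_1^2 + z_2^2$ and extending trivially in the $w$-directions, as done in dimension $4$ in \cite{GSV15}, yields a Poisson bivector $\pi_L$ that vanishes exactly at the Lefschetz point and whose symplectic leaves coincide with the nearby fibres. Near a connected component of the indefinite fold locus, \cite{V16} gives a normal form that is the product of a $4$-dimensional indefinite fold with a submersion in the remaining directions; the fold bivector of \cite{GSV15}, extended trivially along the parameter directions, furnishes a bivector $\pi_F$ vanishing precisely on the fold submanifold with fibres of $f$ as leaves.

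For the global construction, I would choose a cover of $M$ by charts of three types -- regular, Lefschetz, fold -- with associated local bivectors $\pi_\alpha$, each chosen as a \emph{positive} section of $\Lambda^2 \ker df$ relative to a fixed global orientation of this bundle on $M \setminus Z$. Given a subordinate partition of unity $\{\rho_\alpha\}$, set $\pi := \sum_\alpha \rho_\alpha \pi_\alpha$. On overlaps the $\pi_\alpha$ are positive multiples of the same section of $\Lambda^2 \ker df$, so $\pi$ is again a nonnegative section of this line bundle, hence rank-$2$ and Poisson by the argument of Step~1, and it vanishes precisely on $Z$. Completeness is automatic since $M$ is closed and all Hamiltonian vector fields are therefore complete.

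For the last assertion, the characteristic foliation of $\pi$ is $2$-dimensional away from $Z$ and its leaves are connected components of fibres of $f$. By the Crainic--Fernandes integrability criterion, a regular rank-$2$ Poisson manifold is integrable when the monodromy groups of all leaves are discrete; for surface leaves this is automatic unless the leaf is a $2$-sphere, which is excluded by hypothesis. The local models at $Z$ are built from integrable pieces, so they extend the integration of the regular part. The main technical obstacle is the orientation and sign bookkeeping of Step~3, needed to guarantee that the convex combination $\pi$ does not develop unwanted zeros outside $Z$; this reduces to applying the sign conventions of \cite{GSV15} within the higher-dimensional normal forms of \cite{V16} and in the holomorphic model of \cite{Dam89}.
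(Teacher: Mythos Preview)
Your argument is correct, but it is considerably more elaborate than the paper's. The paper does not patch local bivectors at all: it invokes Theorem~\ref{T:Const-Poisson} (from \cite{GSV15}, attributed to Flaschka--Ratiu via \cite{Dam89}), which gives a \emph{global} Poisson bracket on $M$ by the single formula
\[
\{g,h\}\,\mu \;=\; k\, dg \wedge dh \wedge f^*\Omega,
\]
where $\mu$ and $\Omega$ are the chosen volume forms on $M$ and $X$. This formula makes sense everywhere on $M$ at once, automatically vanishes exactly where $f^*\Omega$ drops rank (the singular set of $f$), and has the fibres of $f$ as symplectic leaves; no partition of unity or orientation bookkeeping is required. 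Completeness then follows from compactness of $M$, and integrability from the Crainic--Fernandes criterion for leaves with trivial $\pi_2$, exactly as you argue.

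Your local-to-global route reproduces this same bivector up to a positive rescaling, but you have to work to ensure the convex combination $\sum_\alpha \rho_\alpha \pi_\alpha$ has no spurious zeros, and your remark that ``local models at $Z$ are built from integrable pieces, so they extend the integration of the regular part'' is not really an argument---integrability is global and does not follow from local integrability. Fortunately that sentence is unnecessary: the Crainic--Fernandes criterion you cite already applies globally once all leaves have vanishing $\pi_2$. (Minor point: the reference \cite{Dam89} is Damianou, not Damon.) The advantage of the paper's approach is economy; the advantage of yours is that it makes the local structure near each singularity type explicit from the outset, which is useful if one wants the concrete formulae the paper later computes anyway.
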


The proof of this theorem is a direct application of Theorem \ref{T:Const-Poisson} and the definition of completeness. The integrability condition is verified in the relevant cases, as explained in \ref{integrability}.
\medskip

In section \ref{S:Symp-Struct-Sing} we focus on the Poisson structure on the total space of a generalized wrinkled fibration in dimension 6. We give the general steps for buidling Poisson bivectors around all types of singularities of corank 1, which can be applied on any given dimension. This idea allows us to show the following. 

\begin{theorem}\label{wrinkled-poisson}
Let $M$ be a closed, orientable, smooth $6$--manifold equipped with a generalized wrinkled fibration $f\colon M\rightarrow X$ over a smooth 4-manifold $X$. Then there exists a complete Poisson structure whose symplectic leaves correspond to the fibres of the given fibration structure, and the singularities of both the fibration and the Poisson structures coincide. Moreover, for each singularity, the Poisson bivector and induced symplectic form on the leaves are given by the following equations: 

\begin{tabular}{lclll}
{\em Folds}: & \qquad & Poisson bivectors (\ref{biv:fold}), \eqref{biv:fold-def1}, \eqref{biv:fold-def2} & and & symplectic forms \eqref{FoldForm}, \eqref{FoldFormDefinite1}, \eqref{FoldFormDefinite2}
\nonumber
\\
{\em Cusps}:  & \qquad & Poisson bivector (\ref{biv:cusp}), \eqref{biv:cusp-def1}, \eqref{biv:cusp-def2} & and & symplectic forms (\ref{CuspForm}), \eqref{CuspFormDefinite}
\nonumber
\\
{\em Swallowtail}: & \qquad & Poisson bivector (\ref{biv:swallowtail}), \eqref{biv:swallowtail-def1},   \eqref{biv:swallowtail-def2} & and & symplectic forms (\ref{SwallowtailForm}), \eqref{SwallowtailFormDefinite}
\nonumber
\\
{\em Butterfly}:  & \qquad & Poisson bivector (\ref{biv:butterfly}),  \eqref{biv:butterfly-def1},  \eqref{biv:butterfly-def2} & and & symplectic forms (\ref{ButterflyForm}), \eqref{ButterflyFormDefinite}
\end{tabular}

\noindent If none of its symplectic leaves are, or contain,  $2$--spheres, then this Poisson structure is integrable.
\end{theorem}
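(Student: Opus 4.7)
The plan is to construct the Poisson bivector locally in the normal-form charts of each singularity of the generalized wrinkled fibration $f$, glue by a partition of unity, and verify completeness and integrability at the end. At a regular point, $f$ is a submersion, so in adapted coordinates $(x_1,\dots,x_6)\mapsto(x_3,\dots,x_6)$ one takes $\pi_0 = \partial_{x_1}\wedge\partial_{x_2}$, whose symplectic leaves are precisely the fibres. At each singular point one applies the corresponding Thom--Boardman normal form $\Sigma^{1,0}$ (fold), $\Sigma^{1,1,0}$ (cusp), $\Sigma^{1,1,1,0}$ (swallowtail), or $\Sigma^{1,1,1,1,0}$ (butterfly), in its definite and indefinite variants. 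For each of these charts the bivector is taken of the form $\pi = g\,\partial_{x_1}\wedge\partial_{x_2}$, with $g$ built from a Jacobian-type function of the normal form, following Damianou's recipe \cite{Dam89} as applied in dimension $4$ by \cite{GSV15,ST16}. The function $g$ is arranged to vanish exactly on the critical set of $f$ so the rank-drop of $\pi$ and that of $df$ coincide. A direct computation of the Schouten bracket shows that any bivector of the form $g\,\partial_{x_1}\wedge\partial_{x_2}$ automatically satisfies $[\pi,\pi]_{SN}=0$, independently of $g$, so the Jacobi identity is free; the induced leafwise symplectic form away from the vanishing locus is $g^{-1}\,dx_1\wedge dx_2$, which in each chart reproduces the forms quoted in the statement.

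For the global construction, cover $M$ by charts of the types above with a subordinate partition of unity $\{\rho_i\}$. All the local bivectors $\pi_i$ share the same leafwise $2$-plane distribution, namely the distribution tangent to the (regular) fibres of $f$, so on overlaps $\pi_i$ and $\pi_j$ differ by multiplication by a smooth function. The sum $\pi = \sum_i \rho_i \pi_i$ therefore remains, in each chart, a bivector of the form $g\,\partial_{x_1}\wedge\partial_{x_2}$ and hence Poisson, with leaves given by the fibres and with singularity locus matching that of $f$. Completeness follows as in Theorem \ref{gbLf-poisson}: Hamiltonian vector fields are tangent to the leaves, which are closed in the compact manifold $M$. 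Integrability under the hypothesis that no leaf is or contains a $2$-sphere is obtained from the Crainic--Fernandes criterion, exactly as in \ref{integrability}.

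The main obstacle lies in the first step: for each of the four normal forms one must exhibit an explicit $g$ whose zero set equals the singular locus of $f$ in that chart and whose reciprocal produces the claimed symplectic form on the regular leaves, with the definite and indefinite variants treated separately. The swallowtail and butterfly cases are new in this dimension; their critical loci carry finer stratifications than their $4$-dimensional analogues, and one must carefully arrange the Jacobian data so that the rank-drop of $\pi$ splits consistently along the fold lines emanating from each higher-codimension stratum.
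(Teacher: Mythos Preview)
Your local ansatz is the problem. In the Thom--Boardman normal-form charts the fibres of $f$ are the level sets of $(t_1,t_2,t_3,C_4(t,x))$ inside the $(x_1,x_2,x_3)$-slice; they are curved surfaces, not the coordinate plane $\{x_3=\text{const}\}$. A bivector tangent to those leaves therefore cannot have the form $g\,\partial_{x_1}\wedge\partial_{x_2}$ in those coordinates. Indeed, the fold bivector \eqref{biv:fold} already has three components,
\[
\pi = k\left[ 2x_3\,\partial_{x_2}\wedge\partial_{x_1} - 2x_2\,\partial_{x_3}\wedge\partial_{x_1} - 2x_1\,\partial_{x_3}\wedge\partial_{x_2}\right],
\]
and the same is true for the other singularities. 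Your ``Schouten bracket is free'' step and the claim that the leafwise form is $g^{-1}\,dx_1\wedge dx_2$ both rest on this incorrect ansatz, so the computation of the explicit forms \eqref{FoldForm}--\eqref{ButterflyFormDefinite} does not go through as written. The partition-of-unity gluing is likewise built on the same premise.

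The paper avoids all of this by working globally from the start: the bracket is defined by the intrinsic Flaschka--Ratiu/Damianou formula $\{g,h\}\,\mu = k\,dg\wedge dh\wedge f^{*}\Omega$ (Theorem~\ref{T:Const-Poisson}), which is automatically Poisson and has the fibres of $f$ as symplectic leaves, with no patching required. The coordinate functions of $f$ serve as Casimirs, and the local bivectors and leafwise symplectic forms are obtained by expanding this determinant in each normal-form chart (Proposition~\ref{bivector-criterion} and the computations in Section~\ref{SS:Poisson-Sym-Wrinkled} and the appendix). Your treatment of completeness and of integrability via Crainic--Fernandes is fine and matches the paper; what you are missing is this global construction, which is what actually produces the multi-component bivectors referenced in the statement.
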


The existence of a Poisson structure with the stated properties follows from Theorem \ref{T:Const-Poisson}, previously shown by the first and third named authors together with Garc{\'i}a-Naranjo \cite{GSV15}. The proof of this theorem follows from an application of Theorem \ref{T:Const-Poisson} and the definition of completeness.\medskip 

These results allow us to present in section \ref{morita-examples} countably many examples of Poisson structures on the same underlying smooth manifold that are Morita inequivalent. In our examples the leaves of the symplectic foliations change topology, as the fibrations involved undergo deformations. 
\medskip 

In section \ref{S:Symp-Struct-Sing}, the local models for the bivectors are shown to hold true. Then in section \ref{S:symp-forms} we prove that the local models for the symplectic forms on the leaves are also the claimed ones.
\medskip 

Finally, in section \ref{sec:near-sympletic} we turn to near-symplectic geometry. Explicit models of near-symplectic forms have appeared in previous work \cite{Ho204, ADK05, L09, V16}. Here we show a further construction of local near-symplectic forms that follows a general scheme for all the singularities of a generalized wrinkled fibration. Assuming the global conditions for the existence of a near-symplectic structure on a $2n$--manifold with a generalized bLf are met and constructing the local forms around the new singularities we obtain:

\begin{theorem}\label{thm:near-symplectic}
Let $M$ be a closed oriented 6-manifold, $(X, \omega_X)$ a closed symplectic 4-manifold, and $f\colon M \rightarrow X$ a generalized wrinkled fibration. Denote by $Z$ the singularity set of $f$, a 3-submanifold of $M$. Assume that there is a class $\alpha \in H^2(M)$, such that it pairs positively with every component of every fibre, and $\alpha|_{Z} = [\omega_X|_{Z}]$.  Then there exist a near-symplectic form $\omega$ on $M$ with singular locus $Z$ such that it restricts to a symplectic form on the smooth fibres of the fibration.
\end{theorem}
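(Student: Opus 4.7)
The plan is to construct $\omega$ by patching explicit local near-symplectic models around each singular stratum with a Thurston-type global form built on the submersion locus, and then to verify non-degeneracy off $Z$. This is the higher-dimensional wrinkled analogue of the Auroux--Donaldson--Katzarkov construction \cite{ADK05} and of the generalized broken Lefschetz version in \cite{V16}.

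First I would fix an open cover $\{U_Z, V\}$ of $M$, where $U_Z$ is a tubular neighborhood of the singularity set $Z$ chosen so small that each connected component of $Z$ sits inside a chart in which $f$ agrees with one of the four standard local models (fold, cusp, swallowtail, butterfly), and $V$ is the complement of a slightly smaller closed neighborhood of $Z$, on which $f$ is a submersion. On $U_Z$ I would directly use the closed 2-forms whose restrictions to the leaves are the symplectic forms listed in Theorem \ref{wrinkled-poisson}, namely \eqref{FoldForm}, \eqref{CuspForm}, \eqref{SwallowtailForm}, \eqref{ButterflyForm} and their definite analogues. By construction each of these is closed, restricts to a symplectic form on the smooth fibres, and degenerates precisely along $Z$ with the correct transverse behaviour to be a local near-symplectic form $\omega_{\mathrm{loc}}$.

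On $V$, I would run the standard Thurston/Gompf trick in our setting: since $\alpha$ pairs positively with every component of every fibre, there is a closed 2-form $\eta$ on a neighborhood of every fibre representing $\alpha$ and restricting to a positive-area form on each fibre component; then for $K \gg 0$ the form $\omega_V := \eta + K f^{*}\omega_X$ is symplectic on $V$. The cohomological hypothesis $\alpha|_Z = [\omega_X|_Z]$ is what makes the two pictures compatible: on $Z$ the restriction of $\omega_V$ represents $\alpha|_Z$, while the restriction of $\omega_{\mathrm{loc}}$ represents $[\omega_X|_Z]$ by the explicit local models, so they differ by an exact form $d\beta$ on $Z$. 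Extending $\beta$ to $U_Z$ and modifying $\omega_{\mathrm{loc}}$ by $d(\chi\beta)$ for an appropriate cut-off $\chi$ makes the two representatives cohomologous on the overlap $U_Z \cap V$. I would then fix a partition of unity $\{\rho_Z,\rho_V\}$ subordinate to the cover and define $\omega := \omega_V + \rho_Z(\omega_{\mathrm{loc}} - \omega_V)$; once the bracketed difference is exact, $\omega$ is closed, and equals $\omega_{\mathrm{loc}}$ on a neighborhood of $Z$, so the near-symplectic behaviour on $Z$ is inherited from the local models.

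The main obstacle will be the last part: ensuring that the interpolation on the overlap does not introduce new zeros of $\omega^{3}$. Along fibre directions non-degeneracy is guaranteed by the local models and by $\eta$; the delicate point is the directions transverse to the fibres, where one must check that the differential of the cut-off term does not kill the dominant contribution $K f^{*}\omega_X$. This is handled in the usual way: first shrink $U_Z$ so $|d\chi|$ is controlled, then take $K$ large enough that $K f^{*}\omega_X$ dominates the exact correction uniformly on the overlap. Since away from $Z$ the form $\omega_V$ is symplectic and the modification is a small closed perturbation, non-degeneracy on $M \setminus Z$ follows, completing the construction.
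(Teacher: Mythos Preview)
Your global patching scheme is broadly the right architecture and matches the paper's outline (local models near $Z$, Thurston--Gompf away from $Z$, cohomological matching on the overlap). The genuine gap is in the local step: the forms \eqref{FoldForm}, \eqref{CuspForm}, \eqref{SwallowtailForm}, \eqref{ButterflyForm} are \emph{not} local near-symplectic forms on $M$. They are the leafwise symplectic (area) forms on the $2$-dimensional symplectic leaves of the rank-$2$ Poisson structure of Theorem~\ref{wrinkled-poisson}. They are not closed $2$-forms on the ambient $6$-manifold, and even where nonzero they have rank~$2$, so their cube vanishes identically; they cannot satisfy $\omega^3>0$ on $M\setminus Z$ nor the transversality condition of Definition~\ref{near-symplectic}. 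In short, you have conflated the Poisson/leaf-symplectic data with the near-symplectic local models, and these are different objects. (Also, the definite models have no role here: near-symplectic geometry forces the indefinite case.)

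What the paper actually does in Step~1 is build a genuine $2$-form on a neighborhood of $Z$ of maximal rank off $Z$: start with $\omega_0 = f^*\omega_X + \ast\bigl[(f^*\omega_X)^2\bigr]$, where $\ast$ is the Hodge star for an auxiliary metric. This is non-degenerate off $Z$, positive on fibres, and has $4$-dimensional kernel with $\operatorname{rank}D_K=3$ on $Z$, but it fails to be closed except at folds. One then adds an explicit correction $\eta$ (computed singularity by singularity) and applies a rescaling $R_\varepsilon$ on the self-dual part over the kernel $K$, obtaining $\omega = R_\varepsilon(\omega_0) + \varepsilon\,\eta$; closedness is now forced, and positivity on fibres survives for $\varepsilon$ small. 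Producing and checking these explicit $(\eta, R_\varepsilon)$ for cusps, swallowtails, and butterflies is the substantive new content of the theorem; your proposal bypasses it by invoking the wrong forms, so as written the argument would not go through.
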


The proof of this theorem appears in section \ref{sec:proof-near-symplectic}.  The global construction of the near-symplectic structure on the total space follows as in dimension 4 \cite{ADK05, L09} with the modifications for higher dimensions introduced by the third named author \cite{V16}.  The most substantial difference concerns the construction of the local forms around the new singularities.  

Notice that our Theorem \ref{wrinkled-poisson} presents extensive possibilities for Poisson structures supported on wrinkled fibrations. Whereas our Theorem \ref{thm:near-symplectic} is limited in the singularities used by the restrictions imposed by near-symplectic structures. In particular, there will be no analogue of the results of Auroux-Donaldson-Katzarkov (for bLf's) or of Lekili (for wrinkled fibrations) in dimension 4, which give general conditions for all these fibrations to admit compatible near-symplectic structures. 

 Here we examine near-symplectic forms and singular Poisson bivectors in relation to extensions of Lefschetz fibrations using maps coming from singularity theory. The recent work of Cavalcanti and Klaasse \cite{CK16} also considered Lefschetz fibrations in singular symplectic and Poisson geometry. In contrast to our work, Cavalcanti and Klaasse have connected achiral Lefschetz fibrations to log-symplectic and folded symplectic structures. An even dimensional manifold $M$ is said to be {\em log-symplectic} if it is equipped with a Poisson bivector $\pi$ such that the Pfaffian $\pi^n$ is transverse to the zero section in $\Lambda^{2n} TM$, $M$ is called {\em folded symplectic} if it comes with a closed 2-form $\omega$ such that $\omega^n$ vanishes transversally in $\Lambda^{2n}T^{*}M$.  The geometric structures studied in this work differ from the ones considered by Cavalcanti and Klaasse. It is not yet clear what is the precise relationship between broken Lefschetz fibrations and log-symplectic manifolds.
\\
\\
{\it Acknowledgements:} We warmly thank Yank{\i}  Lekili for answering detailed questions about his paper. His explanations allowed us to complete our computations for the near-symplectic forms. We also thank Alan Weinstein for commenting on this paper. PSS thanks PAPIIT-UNAM and CONACyT-M{\'e}xico for supporting research activities, to the organizers of the meeting {\it 'Gone fishing 2016'} in Boulder, and IPAM in UCLA where some of this work was done.

\section{Preliminaries}

\subsection{Wrinkled and Broken Lefschetz Fibrations}\label{sec:genBLF}
We start by recalling the definition of a broken Lefschetz fibration. Regular fibres are 2-dimensional, smooth and convex and singular fibres present an isolated nodal singularity.  

\begin{definition}
\label{D:BLF}
On a smooth, closed 4-manifold $X$, a {\em broken Lefschetz fibration} or {\em BLF} is a smooth map $f\colon X \rightarrow S^2$ that is a submersion outside the singularity set.  Moreover, the allowed singularities are of the following type:
\begin{enumerate}
\item {\em indefinite fold singularities}, also called {\em broken}, contained in the smooth embedded 1-dimensional submanifold $\Gamma \subset X \setminus  C$, which are locally modelled by the real charts
$$ \mathbb{R}^{4} \rightarrow \mathbb{R}^{2} ,  \quad \quad  (t,x_1,x_2,x_3) \mapsto (t, - x_{1}^{2} + x_{2}^{2} + x_{3}^{2}),$$
\item {\em Lefschetz singularities}:  finitely many points  $$C= \{ p_1, \dots , p_k\} \subset X,$$  which are locally modeled by complex charts
$$ \mathbb{C}^{2} \rightarrow \mathbb{C}  ,  \quad \quad (z_1, z_2) \mapsto z_{1}^{2} + z_{2}^{2}.$$
\end{enumerate}
\end{definition}

We recollect a few concepts of singularity theory before defining a generalized bLf.  Let $f\colon M^n \rightarrow X^q$ be a smooth map between two smooth manifolds with $\dim(M) \geq \dim(X)$ and differential map $df\colon TM \rightarrow TX$. A point $p\in M$ is {\em regular} if the rank of $df_{p}$ is maximal. In this case $f$ is a submersion at $p$. If $\textnormal{Rank}(df_p) < \dim(X)$, then a point $p\in M$ is called a {\em singularity of $f$}. Let $k = \dim(X) - \textnormal{Rank}(df_p)$ denote the corank of $f$. The set $\Sigma_k = \lbrace p\in M \mid \textnormal{corank}(df_p) = k \geq 1 \rbrace$ is known as the {\it singularity set} or singular locus of $f$.  For generic maps, $\Sigma_k$ are submanifolds of $M$. As we can see from the definition, there can be different singularity sets depending on the corank of $f$. In this work we will focus on singularities of corank 1. The elements of the set $\Sigma_{1}$ satisfying $T_p \Sigma_f + \ker(f) = T_p M$ are called {\it fold} singularities of $f$.  A mapping $f\colon M \rightarrow X$ is then known as a {\it submersion with folds}, if it is a submersion outside the set of fold singularities.  In particular, a submersion with folds restricts to an immersion on its fold locus (see Lemma 4.3 p.87 \cite{GG}).  Submersions with folds are related to stable maps. By a stable $f$ we mean that any nearby map $\tilde{f} \in C^{\infty}(M,X)$ is equivalent to $f$ after a smooth change of coordinates in the domain and range. 

Folds are locally modelled by real coordinate charts $\mathbb{R}^n \rightarrow \mathbb{R}^q$ with $n>q$ and coordinates
$$\left( x_1, \dots , x_n \right) \rightarrow \left( x_1, \dots, x_{q-1}, \pm x^2_{q} \pm x^2_{q+1} \pm \dots \pm x^2_{n}\right) $$
As we can see from the above parametrization, when $q=1$, submersions with folds correspond precisely to Morse functions on $M$.  It is well known that Morse functions are dense in the set of smooth mappings from any $n$-dimensional manifold $M$ to $\mathbb{R}$. There is an equivalent statement for maps with a 2-dimensional target space.  Assumming that $f$ is generic then $\Sigma_1$ is a submanifold, and the restriction of $f$ at $\Sigma_1$ gives a smooth map between manifolds that can also have generic singularities. When the target map is of dimension 2, there is one extra type of generic singularity called cusp. {\it Cusps} are points $p\in \Sigma_1$ such that $T_p \Sigma_1(f) = \ker(df_p)$, and they are parametrized by real charts $\mathbb{R}^n \rightarrow \mathbb{R}^2$  with coordinates
$$\left( x_1, \dots , x_n \right) \rightarrow \left( x_1, x_2^3 + x_1\cdot x_2 \pm  x^2_{3} \pm \dots \pm x^2_{n}\right) $$

\noindent Folds and cusps are the singularities of a wrinkled fibration. 

\begin{definition}\label{D:wrinkled}
A purely wrinkled fibration is a submersion $f$ on a closed 4--manifold $X$ to a closed surface having two types of singularities:
\begin{enumerate}
\item {\em indefinite fold singularities} contained in the smooth embedded 1-dimensional submanifold $\Gamma \subset X$, which are locally modelled by the real charts
$$ \mathbb{R}^{4} \rightarrow \mathbb{R}^{2} ,  \quad \quad  (t,x_1,x_2,x_3) \mapsto (t, - x_{1}^{2} + x_{2}^{2} + x_{3}^{2}),$$

\item {\em cusps}, finitely many points contained in the set  $B= \{ p_1, \dots , p_k\} \subset X,$  which are locally modeled by real charts
$$ \mathbb{R}^{4} \rightarrow \mathbb{R}^2  ,  \quad \quad (t,x_1,x_2,x_3) \mapsto (t,  x_{1}^{3} + 3 t x_1  + x_{2}^{2} - x_{3}^{2}).$$
\end{enumerate}
\end{definition}

\noindent The following theorem shows that generic maps from any $n$--dimensional manifold to a 2--dimensional base have folds and cusps. 
\begin{theorem}\cite{GG}
A generic smooth map $M^n \rightarrow N^2$ has folds and cusps singularities. 
\end{theorem}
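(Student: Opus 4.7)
The plan is to combine Thom's jet transversality theorem with the Thom--Boardman stratification of the jet bundle $J^k(M,N)$. First I would define, for each non-increasing sequence $I=(i_1,i_2,\ldots,i_k)$, the Boardman stratum $\Sigma^I\subset J^k(M,N)$ consisting of $k$-jets whose successive corank invariants match $I$; for $f\in C^\infty(M,N)$, set $\Sigma^I(f)=(j^k f)^{-1}(\Sigma^I)$. By Thom's theorem, the set of $f$ whose jet extensions $j^k f$ are simultaneously transverse to every $\Sigma^I$ (for $k\le 3$) is residual in $C^\infty(M,N)$ with respect to the Whitney topology, so it suffices to work with such a generic $f$.

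Now I would perform the dimension count for $q=\dim N=2$. Boardman's codimension formula $\mathrm{codim}\,\Sigma^i=i(n-2+i)$ gives $\mathrm{codim}\,\Sigma^2=2n$, which exceeds $\dim M=n$ for every $n\ge 1$, so $\Sigma^2(f)$ is generically empty. Consequently the singular set coincides with $\Sigma^1(f)$, a smooth $1$-dimensional submanifold of $M$. Iterating, $\Sigma^{1,1}(f)$ has codimension $1$ in $\Sigma^1(f)$ and is therefore discrete (finite when $M$ is compact), while $\Sigma^{1,1,1}(f)$ has codimension $1$ inside the $0$-dimensional $\Sigma^{1,1}(f)$ and is generically empty. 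Hence the singularities of $f$ split into the open fold locus $\Sigma^{1,0}(f)$ and the isolated cusp locus $\Sigma^{1,1,0}(f)$.

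Finally, I would invoke Morin's normal form theorem to supply the announced local coordinate expressions. Fold points are characterized by $T_p\Sigma^1(f)+\ker df_p=T_pM$ and admit the quadratic model $(x_1,\pm x_2^2\pm\cdots\pm x_n^2)$; cusp points satisfy $T_p\Sigma^1(f)=\ker df_p$ together with a second-order non-degeneracy condition, and admit the cubic model $(x_1,x_2^3+x_1 x_2\pm x_3^2\pm\cdots\pm x_n^2)$. The main obstacle is precisely this last step: the transversality and dimension counts are essentially automatic from Thom's theorem, but producing these explicit normal forms requires the unfolding machinery of singularity theory---the Malgrange preparation theorem together with successive changes of coordinate---for which I would refer the reader to the detailed treatment in \cite{GG}.
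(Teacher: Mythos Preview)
The paper does not prove this statement; it is quoted as a known result from \cite{GG} (Golubitsky--Guillemin) and used as background for motivating the definition of wrinkled fibrations. So there is no ``paper's own proof'' to compare against.

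That said, your outline is the standard argument and is correct in its architecture: Thom jet transversality makes the Boardman strata $\Sigma^I$ generically transverse to $j^k f$, the dimension count with $q=2$ kills $\Sigma^2$ and $\Sigma^{1,1,1}$, leaving only $\Sigma^{1,0}$ (folds) and $\Sigma^{1,1,0}$ (cusps), and Morin's normal forms supply the local models. This is precisely the content of the relevant sections of \cite{GG}, so your sketch and the cited reference agree. Your honest flag that the normal-form step requires the Malgrange preparation theorem and is the real technical weight of the result is appropriate; that is indeed where the work lies, and referring to \cite{GG} for those details is exactly what the paper does.
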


In this context, wrinkled fibrations are generic maps defined on a smooth 4-manifold with image on the 2-sphere, and broken Lefschetz fibrations are submersions with folds and Lefschetz singularities. The latter are natural in the symplectic setting. As it was shown by Donaldson and Gompf, there is a correspondence between symplectic 4-manifolds and Lefschetz fibrations. Yet, Lefschetz singularities are not stable from the point of view of singularity theory.  Lekili showed that Lefschetz singularities can be transformed into cusps yielding to a wrinkled fibration. As a consequence, we can modify a bLf into a submersion with folds and cusps, which are stable and dense.

We proceed now to higher dimensions. To start, consider the definition of broken Lefschetz fibrations in higher dimensions \cite{V16}.

\begin{definition}\label{genBLF}
Let $M,X$ be smooth manifolds of dimensions $2n$ and $2n-2$. By a {\em generalized broken Lefschetz fibration} we mean a submersion $f\colon M \rightarrow X$ with two types of singularities:
\\
\\
1. {\em Indefinite fold singularities}, locally modeled by:
\begin{align}
\mathbb{R}^{2n} &\rightarrow \mathbb{R}^{2n-2}
\nonumber
\\
(t_1, \dots, t_{2n-3}, x_1,x_2,x_3)& \mapsto (t_1, \dots, t_{2n-3},  -x_1^2 + x_2^2 + x_3^2)
\nonumber
\end{align}
The fold locus is an embedded codimension 3 submanifold. We denote it by $Z$.  Singular fibres have again at most one singularity on each fibre,  but this time crossing $Z$ changes the genus of the regular fibre by one. Throughout this work we assume that the singular fibres do not intersect each other. 
\\
\\
2. {\em Lefschetz-type singularities}, locally modelled by:
\begin{align}
\mathbb{C}^n &\rightarrow \mathbb{C}^{n-1}
\nonumber
\\
(z_1, \dots , z_n) &\rightarrow (z_1, \dots z_{n-2}, z_{n-1}^2 + z_{n}^2)
\nonumber
\end{align}

These singularities are contained in codimension 4 submanifolds cross a Lefschetz singular point. We denote the set of Lefschetz-type singularities by $C$. Each singular fibre presents at most one singularity on each fibre.  On a piece of the fibre, this can be depicted as a local cone that collapses at the origin where $z_{n-1}^2 + z_{n}^2 = 0$.  Nearby fibres are smooth.  In the local description on a piece of a fibre, the cone opens up again and it is convex. 
\end{definition}

\subsection{Generalized Wrinkled Fibrations}
Stable maps of $M^n \rightarrow X^q$ are dense in $C^{\infty}(M,X)$ if and only if the pair $(n,q)$ satisfies certain conditions depending on the dimension $q$ of the target manifold $X$ and the difference $(n-q)$. We refer the reader to \cite{GG, M71} for a detailed account. In particular, in the case of $M^6 \rightarrow X^4$ we have the following characterization. 

\begin{theorem}\cite{M71, GG}
A generic smooth map $M^6 \rightarrow N^4$ has folds, cusps, swallowtails, and butterflies singularities. 
\end{theorem}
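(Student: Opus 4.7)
The plan is to invoke Mather's classification machinery together with Thom transversality. The pair $(n,q)=(6,4)$ lies inside Mather's range of \emph{nice dimensions}, so stable maps are dense in $C^\infty(M,N)$ by Mather's theorem; it therefore suffices to enumerate the local normal forms of stable germs $(\mathbb{R}^6,0)\to(\mathbb{R}^4,0)$ and verify that these four types are exactly the ones that appear for a generic $f$.

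First I would stratify the jet bundle by Thom--Boardman symbols and compute dimensions. The corank-$2$ stratum $\Sigma^2$ has expected codimension $2(n-q+2)=8$ in the source, which exceeds $n=6$, so by the Thom transversality theorem a generic $f$ misses $\Sigma^2$ entirely. Hence all singularities of a generic $f$ have corank $1$, and we are reduced to the nested strata
\begin{equation*}
\Sigma^1 \supset \Sigma^{1,1} \supset \Sigma^{1,1,1} \supset \Sigma^{1,1,1,1} \supset \Sigma^{1,1,1,1,1}.
\end{equation*}
A direct codimension computation shows $\operatorname{codim}\Sigma^{1^k}=k(n-q+1)+\cdots$ grows so that $\dim\Sigma^1=3$, $\dim\Sigma^{1,1}=2$, $\dim\Sigma^{1,1,1}=1$, $\dim\Sigma^{1,1,1,1}=0$, and $\Sigma^{1^5}$ has negative expected dimension, so is generically empty. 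This gives the four candidate singular strata.

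Next I would identify each stratum with one of the named normal forms. On $\Sigma^{1,0}=\Sigma^1\setminus\Sigma^{1,1}$, the standard argument using the splitting lemma plus Morse-type reduction in the kernel direction yields the fold normal form $(t_1,t_2,t_3,x_1,x_2,x_3)\mapsto(t_1,t_2,t_3,\pm x_1^2\pm x_2^2\pm x_3^2)$. Iterating the splitting-lemma / finite determinacy argument on successive Boardman strata, and invoking Mather's infinitesimal stability criterion, produces respectively the cusp ($A_2$), swallowtail ($A_3$) and butterfly ($A_4$) normal forms as the unique stable germs of local algebra $\mathbb{R}[x]/(x^{k+1})$ for $k=2,3,4$. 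This is essentially Mather's classification of stable $\mathcal{K}$-simple corank-$1$ map germs combined with the $\mathcal{A}$-finite determinacy theorem.

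The main obstacle is the last step: one needs to check that at each Thom--Boardman stratum the only $\mathcal{A}$-stable singularity type, up to coordinate change, is the $A_k$ singularity, and that the nice-dimension hypothesis guarantees this classification is complete (no moduli, no exceptional strata in $(6,4)$). This is where one genuinely uses Mather's theorem that $(6,4)$ is a nice pair; outside the nice dimensions one would have to contend with topological vs.\ smooth stability and potentially continuous moduli. Since the classification is standard and thoroughly developed in \cite{GG, M71}, one can cite it directly rather than reproduce the normal-form reductions, but I would emphasize the dimension count above to make transparent \emph{why} exactly these four types and no more appear for $M^6\to N^4$.
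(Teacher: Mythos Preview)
The paper does not prove this theorem; it is simply cited from Mather and Golubitsky--Guillemin and used as a black box to motivate the definition of generalized wrinkled fibration in dimension~6. Your sketch is essentially the standard argument one extracts from those references: rule out corank~$\geq 2$ and the deeper Morin stratum $\Sigma^{1^5}$ by Thom transversality and a codimension count, then invoke Mather's classification of stable germs in the nice dimensions to identify the surviving Thom--Boardman strata $\Sigma^{1^k,0}$, $k=1,2,3,4$, with the $A_k$ normal forms (fold, cusp, swallowtail, butterfly). There is nothing in the paper to compare against; you have supplied a proof outline where the authors simply cite one.

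One small remark: your line ``$\operatorname{codim}\Sigma^{1^k}=k(n-q+1)+\cdots$'' is left deliberately vague. For Morin singularities the passage from $\Sigma^{1^k}$ to $\Sigma^{1^{k+1}}$ increases the codimension by exactly~$1$, so the dimensions $3,2,1,0$ you list are correct; it would be cleaner to state this increment directly rather than gesture at a formula with an ellipsis. Also, your final paragraph already acknowledges the real content lies in Mather's theorem that $(6,4)$ is a nice pair and in the $\mathcal{A}$-classification of corank-one germs---that is exactly right, and is why the paper treats the statement as a citation rather than something to reprove.
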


\noindent This suggests the following definition. 

\begin{definition}\label{genWLF-dim-6}
On a smooth 6-manifold $M$ a {\em generalized wrinkled fibration} $f\colon M�\rightarrow X$ is a submersion to a smooth closed 4-manifold $X$ with the following four indefinite singularities each locally modelled by real charts $\mathbb{R}^6 \rightarrow \mathbb{R}^4$

1. {\em folds}
$$ (t_1, t_2, t_3, x_1,x_2,x_3) \mapsto (t_1, t_2, t_3,  -x_1^2 + x_2^2 + x_3^2)$$

2. {\em cusps} 
$$ (t_1, t_2, t_3, x_1,x_2,x_3) \mapsto (t_1, t_2, t_3,  x_1^3  - 3 t_1 \cdot x_1 + x_2^2 -  x_3^2) $$

3. {\em swallowtails} 
$$(t_1, t_2, t_3, x_1, x_2, x_3) \mapsto ( t_1, t_2, t_3,    x_1^4 + t_1 x_1^2 + t_2 x_1 + x_2^2  -  x_3^2)$$

4. {\em butterflies} 

$$(t_1, t_2, t_3, x_1, x_2, x_3) \mapsto ( t_1, t_2, t_3,    x_1^5 + t_1 x_1^3 + t_2 x_1^2 + t_3 x_1  + x_2^2 - x_3^2)$$
\end{definition}

\subsection{Poisson Manifolds}\label{ss:Poisson} This section contains basic facts about Poisson geometry that we will use throughout the paper. We refer the interested readers to \cite{V94, DZ05, LGPV13} for further details.

\subsubsection{Poisson Structures}
\begin{definition}
\label{D:Poisson}
 A {\em Poisson bracket} (or a {\em Poisson structure}) on a smooth manifold $M$ is a bilinear operation
 $\{\cdot , \cdot \}$ on the set $C^\infty(M)$ of real valued smooth functions on $M$ that satisfies
 \begin{enumerate}
\item[(i)] $( C^\infty(M) , \{\cdot , \cdot \})$ is a Lie algebra.
\item[(ii)]  $\{gh, k\}=g\{h, k\}+ h\{g, k\}$ for any $g,h,k\in  C^\infty(M)$.
\end{enumerate}
\end{definition}
A manifold $M$ with such a Poisson bracket is called a {\em Poisson manifold}. If $(M, \omega)$ is a symplectic manifold, we may use the symplectic form $\omega$ to produce a Poisson structure. The bracket of $M$ is defined by
\begin{equation*}
\{g,h\}=\omega(X_g,X_h).
\end{equation*}
Hamiltonian vector fields $X_h$ are defined by ${\bf i}_{X_h}\omega =dh$.

Thus, using property (ii) from Definition \ref{D:Poisson} we may define Hamiltonian vector fields for Poisson manifolds.  Given a function $h\in C^\infty(M)$  we assign it the {\em Hamiltonian vector field} $X_h$, defined via
\begin{equation*}
X_h(\cdot )=\{\cdot , h \}.
\end{equation*}

It follows from  (ii) that a Poisson bracket $\{g,h\}$ depends only on the first derivatives of $g$ and $h$.
Hence the Poisson bracket may be considered as defining a bivector field $\pi$ defined by
\begin{equation}\label{eq:bracket-bivector}
\{g,h\}=\pi(dg,dh).
\end{equation}

\noindent Let $\pi$ be a Poisson bivector, for coordinates $(x^1, \dots , x^n)$, we give a local expression
\begin{equation*}
\pi(x)=\frac{1}{2}\sum_{i,j=1}^n\pi^{ij}(x)\frac{\partial}{\partial x^i}\wedge \frac{\partial}{\partial x^j}.
\end{equation*}
Here $\pi^{ij}(x)=\{x^i,x^j\}=-\{x^j,x^i\}$.
\newline

A Poisson bracket satisfies the Jacobi identity, this is expressed by a partial differential equation  for the components of the Poisson bivector \cite{LGPV13}.  Maps that preserve Poisson brackets are called Poisson maps. There are also maps which are called {\em anti-Poisson}:

\begin{definition}[anti-Poisson maps]
A map $\varphi : P\to P'$ between two Poisson manifolds $P$ and $P'$ is called an {\em anti-Poisson} map if
\begin{equation*}
\{ f_1 \circ \varphi ,  f_2 \circ \varphi \}_{P'} = - \{ f_1 ,  f_2 \} \circ \varphi 
\end{equation*}
holds for any two smooth functions $f_1$ and $f_2$ on $P$.
\end{definition}

Let $\pi$ be a bivector on $M$, $q\in M$, and  $\alpha_q\in T_q^*M$. It is possible to define a bundle map:
$$\mathcal{B}\colon T^*M\to TM\,;\, \mathcal{B}_q(\alpha_q)(\cdot)=\pi_q(\cdot ,\alpha_q) $$

\noindent If $\pi$ is a Poisson bivector, we obtain that $X_h=\mathcal{B}(dh)$. We then define the {\em rank} of  $\pi$ at  $q\in M$ to be equal to the rank of $\mathcal{B}_q\colon T^*_qM\to T_qM$. This coincides with the rank of the matrix $\pi^{ij}(x)$.

The distribution defined by $\mathcal{B}_q$  on $ T_qM$ is called the {\em characteristic distribution} of $\pi$. We know by the {\em Symplectic Stratification Theorem} that the characteristic distribution of a Poisson bivector $\pi$ gives rise to a (possibly singular) foliation by symplectic leaves. This foliation is integrable in the sense of Stefan-Sussman (see Theorem 2.6 in \cite{V94}).

Call  $\Sigma_q$  the symplectic leaf of $M$ through the point $q$. As a set $\Sigma_q$ can be also considered as the collection of points that may be joined via piecewise smooth integral curves of Hamiltonian vector fields. Note that if $\omega_{\Sigma_q}$ is the symplectic form on  $\Sigma_q$, $T_q\Sigma_q$ is exactly the characteristic distribution of $\pi$ through $p$. Therefore, given $u_q, v_q \in T_q\Sigma_q$
there exist $\alpha_q, \beta_q\in T^*_qM$ such that it image $\mathcal{B}_q$ is $u_q$ and $v_q$, respectively. Using this we can describe the symplectic form $\omega_{\Sigma_q}$:
\begin{equation}
\label{E:Symp-form-gen}
\omega_{\Sigma_q}(q)(u_q, v_q)=\pi_q(\alpha_q, \beta_q)=\langle \alpha_q,  v_q \rangle=-\langle \beta_q,  u_q \rangle.
\end{equation}

The rank varies as the dimensions of the symplectic leaves do.  When the rank of the characteristic distribution of a bivector is less than or equal to two the following holds \cite{GSV15}:
\begin{proposition}
\label{Prop:rank2}
\begin{enumerate}
\item[]
\item If $\pi$ is a bivector field on $M$ whose characteristic distribution is integrable and has rank less than or equal to two
at each point, then $\pi$ is Poisson.
\item Let $\pi$ be a Poisson structure on $M$ whose rank at each point is less than or equal to two. Then $\pi_1:=k\pi$ is also a Poisson
structure where $k\in C^\infty(M)$ is an arbitrary non-vanishing function.
\end{enumerate}
\end{proposition}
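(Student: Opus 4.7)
The plan is to prove the two parts separately, both resting on the elementary observation that any skew-symmetric bilinear form has even rank, so under the standing hypothesis each point of $M$ has characteristic rank exactly $0$ or $2$.

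For (i), I would verify that $\pi$ is Poisson by checking the Jacobi identity for the induced bracket pointwise, using the fact that $\pi$ is Poisson if and only if the Jacobiator $J(f,g,h) := \{f,\{g,h\}\} + \{g,\{h,f\}\} + \{h,\{f,g\}\}$ vanishes for all $f,g,h\in C^\infty(M)$. At a point $p$ where $\pi(p)=0$, every Hamiltonian vector field $X_f=\mathcal{B}(df)$ vanishes at $p$, so each term of the form $X_f(\{g,h\})(p)$ is zero and $J(f,g,h)(p)=0$ trivially. At a point $p$ where the rank equals $2$, the integrability hypothesis provides (in the Stefan-Sussmann sense) a smooth leaf $L$ through $p$ of dimension $2$ with $T_pL=\mathcal{B}(T^*_pM)$. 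The restriction $\pi|_L$ is a nowhere-degenerate bivector on a $2$-manifold and thus corresponds to a non-vanishing $2$-form $\omega_L$, which is automatically closed for dimensional reasons; hence $(L,\omega_L)$ is symplectic and $\pi|_L$ is Poisson. Since $X_f$ always takes values in the characteristic distribution, it is tangent to $L$, so $\{g,h\}|_L$ depends only on $g|_L, h|_L$. Iterating this observation, $J(f,g,h)(p)$ coincides with the Jacobiator of $\pi|_L$ applied to the restrictions of $f,g,h$ at $p$, and therefore vanishes.

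For (ii), since $k$ is nowhere zero, the characteristic distribution of $\pi_1=k\pi$ coincides with that of $\pi$ and is in particular integrable (it integrates to the symplectic foliation of $\pi$), while the pointwise rank remains at most $2$. Part (i) then immediately yields that $k\pi$ is Poisson. Alternatively, one may give a direct Schouten-Nijenhuis computation: $[k\pi,k\pi]=k^{2}[\pi,\pi]+2k\,\mathcal{B}(dk)\wedge\pi$, where the first term vanishes because $\pi$ is Poisson and the second vanishes pointwise because $\mathcal{B}(dk)$ lies in the (at most) $2$-dimensional image of $\mathcal{B}$ on which $\pi$ is itself supported, so the trivector $\mathcal{B}(dk)\wedge\pi$ is zero for rank reasons.

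The main obstacle I expect is the argument on the singular stratum in (i): one must be confident that the Stefan-Sussmann notion of integrability is what is being assumed and that the leaf dimension at $p$ actually matches the pointwise rank of $\mathcal{B}$, so that the leaf through $p$ is genuinely $2$-dimensional when $\pi(p)\neq 0$. Once the foliation picture is in place, the reduction of $J(f,g,h)(p)$ to a computation intrinsic to the $2$-dimensional leaf through $p$ is the crux, and everything else follows either from triviality at the zero stratum or from the automatic closedness of $2$-forms in dimension $2$.
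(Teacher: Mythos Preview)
The paper does not actually prove Proposition~\ref{Prop:rank2}; it is quoted from \cite{GSV15} and stated without proof. There is therefore no argument in the paper to compare against. Your proposed proof is correct: the pointwise reduction of the Jacobiator to a leafwise computation in part~(i) is the standard and cleanest way to handle this, and both arguments you give for part~(ii) are valid. The Schouten--Nijenhuis alternative is perhaps the more efficient of the two, since the vanishing of $\mathcal{B}(dk)\wedge\pi$ follows immediately from the decomposability of a rank-$2$ bivector and does not require appealing to integrability at all. Your caveat about Stefan--Sussmann integrability is well placed but not an obstacle: the leaf through a point $p$ with $\pi(p)\neq 0$ is $2$-dimensional precisely because its tangent space at $p$ is $\mathrm{Im}\,\mathcal{B}_p$.
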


\noindent We will describe the bivectors locally using certain Casimir functions.

\begin{definition}
\label{D:Casimir}
Let $M$ be a Poisson manifold. A function $h\in C^\infty(M)$ is called a {\em Casimir} if $\{h,g\}=0$ for every $g\in C^\infty(M)$. Equivalently $\mathcal{B}(dh)=0$.
\end{definition}

\begin{theorem}[\cite{GSV15}]
\label{T:Const-Poisson}
Let $M$ be an orientable $n$-manifold, $N$ an orientable $n-2$ manifold, and $f:M\to N$ a smooth map.
Let $\mu$ and $\Omega$ be orientations of $M$ and $N$ respectively. The bracket on $M$ defined by
\begin{equation}
\label{E:Def-Intrinsic}
\{g,h\}\mu=k\,dg\wedge dh \wedge f^*\Omega
\end{equation}
where $k$ is any non-vanishing function on $M$ is Poisson. Moreover, its symplectic leaves are
\begin{enumerate}
\item[(i)] the 2-dimensional leaves $f^{-1}(s)$ where $s\in N$ is
a regular value of $f$, \item[(ii)] the 2-dimensional leaves
$f^{-1}(s)\setminus \{\mbox{Critical Points of $f$}\}$ where $s\in
N$ is a singular value of $f$. \item[(iii)] the 0-dimensional
leaves corresponding to each critical point.
\end{enumerate}
\end{theorem}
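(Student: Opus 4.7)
The plan is to construct the bivector $\pi$ from the defining formula, compute it in adapted local coordinates to pin down its characteristic distribution, and then invoke Proposition \ref{Prop:rank2} so that the Jacobi identity follows for free. Since $\mu$ is a nowhere-vanishing volume form on $M$, the equation $\{g,h\}\mu = k\, dg \wedge dh \wedge f^*\Omega$ defines a bracket on $C^\infty(M)$ unambiguously. Bilinearity, antisymmetry and the Leibniz rule follow directly from the corresponding properties of the exterior derivative and the wedge product, so by \eqref{eq:bracket-bivector} the bracket is represented by a bivector field $\pi \in \Gamma(\Lambda^2 TM)$.

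Next I would work in submersion coordinates $(y^1, \ldots, y^{n-2}, x^1, x^2)$ on $M$ with $f(y,x) = y$ around a regular point of $f$. Only the $dx^1 \wedge dx^2$ component of $dg \wedge dh$ survives after wedging with $f^*\Omega$, and a short computation should give $\pi = c(y,x)\,\partial_{x^1} \wedge \partial_{x^2}$ for some nowhere-vanishing function $c$. At a critical point of $f$ the pullback $f^*\Omega$ vanishes, so $\pi$ vanishes there. Hence the characteristic distribution of $\pi$ equals the vertical tangent space $\ker df_p$ at regular points, has rank $2$ there, and collapses to $0$ at critical points.

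With this local picture in hand the remainder becomes structural. The characteristic distribution is tangent to the fibres of $f$ where it has rank $2$ and is trivial where its rank drops to $0$, so its maximal integral submanifolds are precisely the sets described in (i)--(iii). Because the rank is at most $2$ everywhere and the distribution is integrable by this explicit description, item (i) of Proposition \ref{Prop:rank2} promotes $\pi$ from a mere bivector to a genuine Poisson tensor. To record the symplectic form on a regular leaf I would feed the local expression of $\pi$ into \eqref{E:Symp-form-gen}, obtaining a non-degenerate $2$-form proportional to $dx^1 \wedge dx^2$ restricted to the fibre; at a critical point every Hamiltonian vector field $X_h = \mathcal{B}(dh)$ vanishes, so the leaf is reduced to the point itself.

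The step I expect to be the main obstacle is the integrability of the characteristic distribution across the critical set, where its rank drops discontinuously from $2$ to $0$. I would handle this by checking directly that the decomposition in (i)--(iii) defines a Stefan--Sussmann-type singular foliation whose tangent space at every point agrees with the characteristic distribution of $\pi$; once this is verified pointwise, Proposition \ref{Prop:rank2}(i) closes the argument without ever requiring the Jacobi identity to be checked from the defining formula, which would otherwise demand a long coordinate calculation involving the triple wedge $dg \wedge dh \wedge dl \wedge f^*\Omega$.
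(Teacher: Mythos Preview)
The paper does not prove Theorem~\ref{T:Const-Poisson}; it is quoted from \cite{GSV15} (and the defining formula is traced back to Flaschka--Ratiu via \cite{Dam89}), so there is no in-paper argument to compare against. Your outline is a correct reconstruction of the standard proof: the Leibniz rule is immediate from the wedge-product formula, the local submersion computation gives $\pi = c\,\partial_{x^1}\wedge\partial_{x^2}$ with $c\neq 0$ at regular points, and $f^*\Omega$ (being the pullback of a top form through a map of deficient rank) vanishes at critical points, forcing $\pi=0$ there.

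Your worry about Stefan--Sussmann integrability across the critical set is legitimate but can be bypassed. On the open set of regular points the characteristic distribution is the honest rank-$2$ tangent bundle to the fibres of a submersion, so Proposition~\ref{Prop:rank2}(i) applies there without any singular-foliation subtlety. At a critical point $p$ one has $\pi(p)=0$, and since the Schouten bracket satisfies $[\pi,\pi]^{ijk}=2\bigl(\pi^{il}\partial_l\pi^{jk}+\pi^{jl}\partial_l\pi^{ki}+\pi^{kl}\partial_l\pi^{ij}\bigr)$, it vanishes pointwise wherever $\pi$ does. Hence the Jacobi identity holds on all of $M$ without ever invoking integrability of a singular distribution, and the leaf description in (i)--(iii) then follows from the Symplectic Stratification Theorem applied to the now-established Poisson structure. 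This shortcut is what makes the Flaschka--Ratiu construction so clean, and it removes precisely the step you flagged as the main obstacle.
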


Formula \eqref{E:Def-Intrinsic} appeared in \cite{Dam89}
(attributed to H. Flaschka and T. Ratiu).

\subsubsection{Completeness}
We recall the concept of a complete Poisson manifold
\begin{definition}
A Poisson manifold $M$ is said to be complete if every Hamiltonian
vector field on $M$ is complete.
\end{definition}
\noindent Notice that $M$ is complete if and only if every symplectic leaf is bounded in the
sense that its closure is compact. We can also talk about complete Poisson maps. 

\begin{definition}
A Poisson map $J: Q \to P$ between the Poisson manifolds $Q$ and $P$ is {\em complete} if the hamiltonian vector field $X_{J^{\ast} f}$ is complete whenever $X_{f}$ is complete, $f$ in $C^{\infty}(P)$.
\end{definition}

\subsection{Integrability}\label{integrability}

\subsubsection{Lie Algebroids}

A {\bf Lie algebroid} $(A,B,\rho, [\, , \, ] )$ is a vector bundle $A$ over a manifold $B$ together with a bundle map $\rho : A \to TB$, called the {\bf anchor}, and a Lie bracket $[\, , \, ]$ on the real vector space $\Gamma(A)$ of sections of $A$ such that for every $X,Y$ in $\Gamma(A)$ and smooth real function $f$ of $B$:
$$[X,fY]=f[X,Y]+L_{\rho_{\ast}X} fX$$

Here $\rho_{\ast}$ is the induced map on sections, and $L$ the Lie derivative. 

\subsubsection{Lie Groupoid}

A {\bf groupoid} is small category where all the morphisms are invertible. This consists of a set of morphisms $G$ and a set of objects $B$. There exist surjective maps $s,t:G\to B$ called the {\bf source} and {\bf target} maps, respectively. Let 
$$G^{(2)}:= \{ (u,v)\in G\times G \, : \, s(u)=t(v) \}.$$ 
There exists a {\bf multiplication} map;
$$ G^{(2)}\to G \, ; \,(u,v)\mapsto uv  $$
an {\bf inverse} map;
$$  G\to G\,;\, u,v)\mapsto u\mapsto (u^{-1})$$
and an {\bf identity} bisection:
$$\epsilon : B \to G $$

These comply with the following axioms, for every $u,v,w$ in $G$, and $b$ in $B$. 

$\begin{array}{ccccccccc}
s(uv)&=&s(v),& t(uv)&=&t(u), & \epsilon(b)v&=&v, \\
u\epsilon(b)&=&u, & (uv)w&=&u(vw), & s(u^{-1})&=&t(u), \\
t(u^{-1})&=&s(u), & uu^{-1}&=&\epsilon(t(u)), & uu^{-1}&=&\epsilon(s(u)).
\end{array}$
 
The vector bundle ${\rm ker} (ds)|_{\epsilon(B)}$ has a natural structure of a Lie algebroid ove $B$ with anchor $dt$, its Lie bracket is induced by the multiplication.

Denote by ${\rm Lie}(G)$ the Lie algebroid of the Lie groupoid $G$. Notice that not every Lie algebroid occurs as ${\rm Lie}(G)$ for some $G$. Those which do are called {\bf integrable}.

\subsubsection{Algebroids on cotangent bundles to Poisson manifolds}

The cotangent bundle of a Poisson manifold $M$ can be given a Lie algebroid structure. The Poisson bivector field induces an anchor map, for every $x$ in $M$ and $\sigma$ in $T^{\ast}_{x}M$: 
$$\pi^{\#}(x)(\sigma)=\pi(x) (\sigma, \cdot)$$

The Lie bracket in this structure was introduced by Koszul for $1$--forms:
$$[df, dg]:=d \{ f , g \}$$

When this Lie algebroid is integrable, its associated Poisson manifold is said to be integrable. 
 
Crainic and Fernandes found general obstructions for the integrability of Lie
algebroids. They proved that Poisson manifolds whose symplectic leaves have trivial second homotopy groups are integrable \cite{CF03}.

\begin{definition}
A {\em symplectic realization} of a Poisson manifold $P$ is a Poisson map from a symplectic manifold to $P$.
\end{definition}

\subsection{Morita equivalences}

We will now briefly recall the notion of Morita equivalence for integrable Poisson manifolds (see \cite{BW05, X91, X92}).

\begin{definition}[complete full dual pair]
Let $S$ be a symplectic manifold. A pair of Poisson maps $P_{1} \leftarrow S \to P_{2}$ is called a {\em dual pair} if the $J_1$- and $J_2$- fibres are the symplectic orthogonal of each other. Such a pair is called {\em full} if $J_1$ and $J_2$ are surjective submersions. If both $J_1$ and $J_2$ are complete, it is called {\em complete}.
\end{definition}

\begin{definition}[Morita equivalence for integrable Poisson manifolds]
A pair of integrable Poisson manifolds $P_1$ and $P_2$ are called {\em Morita equivalent} if there exists a symplectic manifold $S$ with a complete Poisson map $J_1 : S\to P_1$ and a complete anti-Poisson map  $J_2 : S\to P_2$ so that $P_{1} \leftarrow S \to \overline{P_{2}}$ is a complete full dual pair for which the $J_1$- and $J_2$- fibres are simply connected. 
\end{definition}

\noindent For the readers' convenience, we include the next well known statement:

\begin{lemma}[Morita equivalences and fundamental groups of leaves]\label{morita-ineq}
For Morita equivalent Poisson manifolds, corresponding symplectic leaves have isomorphic fundamental groups.
\end{lemma}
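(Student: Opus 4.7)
The plan is to use the standard correspondence between symplectic leaves in a Morita equivalence and then extract isomorphisms of fundamental groups from the long exact sequences of the two induced fibrations.

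First I would set up the leaf correspondence. Given a leaf $L_1\subset P_1$, I claim there is a uniquely determined leaf $L_2\subset P_2$ such that
\begin{equation*}
J_1^{-1}(L_1)=J_2^{-1}(L_2),
\end{equation*}
and this assignment $L_1\leftrightarrow L_2$ is bijective. The reason is that in a full dual pair the $J_1$- and $J_2$-fibres are symplectic orthogonal, so the characteristic distribution of $J_1^*C^\infty(P_1)$ (whose leaves are the $J_2$-fibres) is tangent to the $J_1$-saturation of any leaf; hence $J_1$-preimages of symplectic leaves of $P_1$ are unions of $J_2$-fibres, and symmetrically. Combined with connectedness of leaves, this gives the desired identification. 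I would invoke this as the standard Weinstein--Xu correspondence rather than reprove it.

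Next I would upgrade $J_i$ restricted to $J_i^{-1}(L_i)$ to a locally trivial fibration. Since $J_i$ is a surjective submersion and is complete (so that Hamiltonian flows on $P_i$ lift to $S$), an Ehresmann-type argument using flows of lifts of vector fields tangent to $L_i$ shows that $J_i\colon J_i^{-1}(L_i)\to L_i$ is a locally trivial fibration with typical fibre $J_i^{-1}(p_i)$, which by assumption is simply connected.

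Finally I would apply the long exact sequence of homotopy groups to both fibrations. From $J_1\colon J_1^{-1}(L_1)\to L_1$ with simply connected fibre one gets
\begin{equation*}
\pi_1(J_1^{-1}(L_1))\;\cong\;\pi_1(L_1),
\end{equation*}
and analogously $\pi_1(J_2^{-1}(L_2))\cong \pi_1(L_2)$. Combined with the set-theoretic equality $J_1^{-1}(L_1)=J_2^{-1}(L_2)$ from the first step, this produces the asserted isomorphism $\pi_1(L_1)\cong \pi_1(L_2)$.

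The main technical point I expect to dwell on is the fibration step: verifying that completeness of the Poisson maps $J_i$ really suffices to conclude local triviality of $J_i\colon J_i^{-1}(L_i)\to L_i$, since $L_i$ is typically only an immersed (possibly non-embedded) submanifold. The rest is formal once the leaf correspondence and fibration structure are in hand.
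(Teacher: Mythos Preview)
Your proposal is correct and follows essentially the same approach as the paper: identify $N=J_1^{-1}(L_1)=J_2^{-1}(L_2)$ via the leaf correspondence, then use that the restricted maps $J_i\colon N\to L_i$ have simply connected fibres to conclude $\pi_1(L_1)\cong\pi_1(N)\cong\pi_1(L_2)$. The paper's proof is very terse and simply asserts the induced isomorphism of fundamental groups, whereas you correctly make explicit the two ingredients it suppresses---that completeness is needed to get a genuine (locally trivial) fibration over each leaf, and that the long exact sequence of that fibration is what produces the $\pi_1$-isomorphism.
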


\begin{proof}
Let $P_1$ and $P_2$ be two Morita equivalent Poisson manifolds and $P_{1} \leftarrow S \to \overline{P_{2}}$ be the associated complete full dual pair. Consider symplectic leaves $L_1$ in $P_1$ and $L_2$ in $P_2$ such that $N=J_1^{-1}(L_1)=J_2^{-1}(L_2)$. Then $J_{i}$ maps $N$ onto $L_{i}$, and the maps have simply connected fibres. So there exists an induced isomorphism of fundamental groups; $\pi_{1}(L_1)\cong \pi_{1}(N) \cong \pi_{1}(L_2)$.\end{proof}

\subsubsection{Morita inequivalent structures}\label{morita-examples}

\begin{example} The near-symplectic cobordisms described by Perutz \cite{P07} for 4-dimensional manifolds can be used to describe examples of Morita inequivalent Poisson structures. For example, suppose there is a near-symplectic cobordism where the number of connected components of the critical set in the base changes, then so does the topology of the fibres in the respective fibrations in the start and end of the cobordism. Assume that none of the fibres in these fibrations were or contained 2-spheres in the initial part of the cobordism, and that the genus of the fibration increases along the cobordism. Then, by lemma \ref{morita-ineq}, the associated Poisson structures on the boundaries of the cobordism are not Morita equivalent.
\end{example}

\begin{example}
The deformations of wrinkled fibrations introduced by Lekili \cite{L09} have been used to describe Poisson structures on the associated fibrations \cite{ST16}. In a similar way to the previous example assume that $M_0$ is a closed smooth oriented 4-manifold with a wrinkled fibration whose fibres do not contain 2-spheres. Then the associated Poisson structure $\Pi_0$ is integrable \cite{ST16}. Perform one of Lekili's deformations on $(M_0,\Pi_0)$ which increases the fibre genus, then the resulting manifold $(M_1, \Pi_1)$ is Poisson \cite{ST16}. Then Lemma \ref{morita-ineq} implies $(M_0,\Pi_0)$ and $(M_1, \Pi_1)$ are not Morita equivalent. Iterating this process exhibits a countable abundance of Morita inequivalent structures on the same underlying smooth $4$-manifold. 
\end{example}

\section{Local Poisson bivectors for the proof of Theorem \ref{wrinkled-poisson} .}\label{S:Symp-Struct-Sing}

We will now give explicit local descriptions for the Poisson structures and the corresponding symplectic forms in a neighbourhood singularities of  generalized wrinkled fibrations in dimension 6. All of the expressions that we will give depend abstractly on an arbitrary choice of a non-vanishing function $k$ in $C^\infty(M)$. See Proposition \ref{Prop:rank2}. Before proceeding we will describe the general strategy employed to find the local bivectors.
\medskip

\noindent {\bf Step 1:} Consider the coordinate functions $C_1, C_2, C_3, C_4$ that describe each fibration as Casimir functions for the Poisson structure that we want to find.
\medskip

\noindent {\bf Step 2:} Calculate the differentials $dC_i$, $i=1, 2, 3, 4$.
\medskip

\noindent {\bf Step 3:} We use formula \ref{E:Def-Intrinsic} to compute the
skew-symmetric matrix with entries:
\begin{equation*}
\pi^{ij}=\{x^i,x^j\}\mu=\,dx^i\wedge dx^j \wedge dC_1\wedge dC_2\wedge dC_3\wedge dC_4.
\end{equation*}

This matrix will then annihilate $dC_i$, $i=1, 2, 3, 4$. It will give the endomorphism $\mathcal{B}$ associated to a Poisson structure with $dC_i$, $i=1, 2, 3, 4$, as Casimirs.
The components of the bivector field will be given by:
\begin{equation*}
\{x^i, x^j\}=\det\left( \epsilon^i, \epsilon^j, dC_1, dC_2 , dC_3, dC_4\right)
\end{equation*}
Here $\epsilon^i$ is the $6\times1$ canonical basis column vector, whose $i$-th
component is $1$ and all others are zero.

\medskip

\noindent {\bf Step 4:} We then write the Poisson bivector using the skew-symmetric matrix entries.

\subsection{General criterion for constructing Poisson bivectors on singularities}
We extend the previous strategy to manifolds of dimension $2n$ when we have a singular submersion with singularities of corank 1.  The following construction will describe a procedure that can be used to compute local expressions of Poisson structures and their corresponding symplectic forms. We will implement this scheme to study the 6-dimensional case. An explicit computation of the local models in dimension 6 appears in appendix \ref{Appendix:Poisson}.

\begin{proposition}\label{bivector-criterion}
Let $q$ be a point that either has complex coordinates $q=(z_1, z_2, \dots z_n)$ or real coordinates $(t_1, t_2, \dots, t_{2n-4}, t_{2n-3}, x_1, x_2, x_3)$. Let $f$ be a smooth map given as either $f\colon \mathbb{C}^n \to \mathbb{C}^{n-1}$ or $f\colon \mathbb{R}^{2n}\to \mathbb{R}^{2n-2}$ such that 
$$f(q)=(z_1, \dots, z_{n-2}, f_o(z_{n-1}, z_{n}))$$ 
or 
$$f(q)=\left(t_1, t_2, \dots, t_{2n-4}, t_{2n-3}, f_o(t_1, \dots, t_{2n-3}, x_1, x_2, x_3)\right),$$ respectively. Here $f_o$ is a smooth map which depends only on the last coordinates $z_{n-1}, z_n$ or $x_1, x_2, x_3$. Then we can produce a Poisson structure associated to the local model given by  $f$. The Poisson bivector has the form:
\begin{equation*}
\pi= \left(\begin{array}{cccccccc}
0 &\cdots&0 & 0 & 0 & 0& 0\\
\vdots &\ddots &\vdots & \vdots & \vdots & \vdots& \vdots\\
0 & \cdots& 0 & 0& 0 & 0 & 0\\ 
0 &\cdots& 0&\pi^{11}&\pi^{12}&\pi^{13} & \pi^{14}\\ 
0 &\cdots & 0&\pi^{21}&\pi^{22}&\pi^{23}  & \pi^{24}\\    
0 &\cdots &0&\pi^{31}&\pi^{32}&\pi^{33} &\pi^{34} \\
0 &\cdots& 0&\pi^{41}&\pi^{42}&\pi^{43} & \pi^{44}
\end{array} 
\right)
\end{equation*}
 where $\pi^{ij}$ is the Poisson bivector of the map $f_o$. Then $\pi^{ii}=0$ and $\pi^{ij}=\pi^{ji}$. Therefore the Poisson bivector has the local form:
 \begin{equation*}
 \pi(x)=\sum_{i, j=1}^4\left[\pi^{ij}\frac{\partial }{\partial x^i}\wedge \frac{\partial }{\partial x^j}\right]
\end{equation*}

\end{proposition}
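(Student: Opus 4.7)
The plan is to apply Theorem \ref{T:Const-Poisson} directly to $f$ and read off the Poisson tensor from formula \eqref{E:Def-Intrinsic}. First I would fix standard volume forms on source and target; for the real case take
\[
\mu = dt_1 \wedge \cdots \wedge dt_{2n-3} \wedge dx_1 \wedge dx_2 \wedge dx_3
\]
together with a Euclidean volume $\Omega$ on $\mathbb{R}^{2n-2}$, and the analogous choices in the complex setting. Since the first $2n-3$ components of $f$ are the coordinate projections and the last one is $f_o$, the pullback is
\[
f^*\Omega = dt_1 \wedge \cdots \wedge dt_{2n-3} \wedge df_o.
\]

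Next, I would evaluate each bracket via $\{x^i, x^j\}\mu = k\, dx^i \wedge dx^j \wedge f^*\Omega$. Whenever one of the indices $i,j$ corresponds to a $t$-variable, the wedge $dx^i \wedge dx^j \wedge dt_1 \wedge \cdots \wedge dt_{2n-3}$ contains a repeated $dt$-factor and vanishes identically, so $\pi^{ij}=0$. This immediately accounts for all rows and columns of zeros in the matrix displayed in the statement: the coordinates $t_1, \ldots, t_{2n-3}$ (respectively $z_1, \ldots, z_{n-2}$ in the complex case) are Casimirs for $\pi$ precisely because they occur as components of $f$. The potentially nonzero block is therefore indexed exactly by the coordinates on which $f_o$ depends.

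On that block I would identify the restriction of \eqref{E:Def-Intrinsic} with the construction of Theorem \ref{T:Const-Poisson} applied to $f_o$ alone; up to the factor $k$ and the orientation convention this is exactly the Poisson bivector associated to $f_o$. Antisymmetry $\pi^{ij} = -\pi^{ji}$ and vanishing of the diagonal follow from the antisymmetry of the wedge product, so the local expression $\pi = \sum_{i<j} \pi^{ij}\, \partial_{x^i} \wedge \partial_{x^j}$ recovers the claimed form.

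No separate Jacobi verification is needed, since Theorem \ref{T:Const-Poisson} already guarantees that \eqref{E:Def-Intrinsic} defines a Poisson bracket; alternatively one may invoke Proposition \ref{Prop:rank2}, as the resulting $\pi$ has rank at most two with integrable characteristic distribution (the fibres of $f$ away from critical points). The main, and essentially only, technical point is the orientation and sign bookkeeping needed to match the nonzero entries in the large $2n \times 2n$ matrix with the indices of the coordinates appearing in $f_o$; this is routine, and the substantive content of the proposition is the structural observation that the projection components of $f$ automatically become Casimirs, confining the bivector to the advertised block.
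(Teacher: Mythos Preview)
Your proposal is correct and follows essentially the same approach as the paper: both apply Theorem~\ref{T:Const-Poisson} via formula~\eqref{E:Def-Intrinsic}, observe that the projection components of $f$ are Casimirs forcing the corresponding rows and columns of $\pi$ to vanish, and then identify the surviving block with the construction applied to $f_o$. The only difference is presentational: the paper carries out the computation by writing the $\pi^{ij}$ as explicit $2n\times 2n$ determinants (with the Casimir differentials and canonical basis vectors as columns) and reduces via the identity block, whereas you argue directly with wedge products and the observation that a repeated $dt$-factor kills the relevant top form; these are the same computation in different notation.
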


\begin{proof}
In the case when $f$ is a complex map we use the real and imaginary parts of each coordinate function as a Casimir function for the Poisson structure that we want to find. That is, we will have $2n-2$ Casimir functions: 
\begin{eqnarray*}
C_i &=&Re(z_i) \quad 1\leq i \leq n-2\\
C_{i+n-2} &= &Im(z_i)\quad 1\leq i \leq n-2\\
C_{2n-3} &= &Re (f_o(z_{n-1}, z_n))\\
C_{2n-2} &= &Im (f_o(z_{n-1}, z_n))
\end{eqnarray*}

Now we compute the differential matrix of the map. It gives a matrix with a $2\times 4$-block corresponding to the derivatives of the real and complex part of $f_o$ and ones on the principal diagonal.
\begin{equation}\label{array:differential-Casimir-N-dimC}
D=\left( \begin{array}{cccccc}
1 &\dots & 0 & 0& 0\\
\vdots &\ddots & \vdots & \vdots& \vdots\\
0 & \cdots & 1 & 0 & 0 \\ 
0 &\cdots &0& \frac{\partial C_{2n-3}}{\partial t_{2n-3}}&\frac{\partial C_{2n-2}}{\partial t_{2n-3}}\\ 
0 &\cdots &0 &\frac{\partial C_{2n-3}}{\partial x_1}&\frac{\partial C_{2n-2}}{\partial x_1}  \\    
0 &\cdots &0&\frac{\partial C_{2n-3}}{\partial x_2}&\frac{\partial C_{2n-2}}{\partial x_2}\\
0 &\cdots &0& \frac{\partial C_{2n-3}}{\partial x_3}&\frac{\partial C_{2n-2}}{\partial x_3}
\end{array} 
\right)
\end{equation}

According to the formula (\ref{E:Def-Intrinsic}) the coefficients of the bivector matrix are given by
\begin{equation*}
\pi^{ij}=Det\left[
\left( \begin{array}{cccccccc}
1 &0 & 0 & 0& 0& \epsilon^1_i &\epsilon^{1}_j\\
\vdots &\ddots & \vdots & \vdots& \vdots& \vdots& \vdots\\
0 & \cdots & 1 & 0 & 0 & \epsilon^{2n-4}_i &\epsilon^{2n-4}_j\\ 
0 &\cdots &0& \frac{\partial C_{2n-3}}{\partial t_{2n-3}}&\frac{\partial C_{2n-2}}{\partial t_{2n-3}}& \epsilon^{2n-3}_i &\epsilon^{2n-3}_j\\ 
0 &\cdots &0 &\frac{\partial C_{2n-3}}{\partial x_1}&\frac{\partial C_{2n-2}}{\partial x_1}  & \epsilon^{2n-2}_i &\epsilon^{2n-2}_j\\    
0 &\cdots &0&\frac{\partial C_{2n-3}}{\partial x_2}&\frac{\partial C_{2n-2}}{\partial x_2}& \epsilon^{2n-1}_i &\epsilon^{2n-1}_j\\
0 &\cdots &0& \frac{\partial C_{2n-3}}{\partial x_3}&\frac{\partial C_{2n-2}}{\partial x_3}&\epsilon^{2n}_i &\epsilon^{2n}_j
\end{array} 
\right)\right]
\end{equation*}

where $\epsilon^{k}_i$ and $\epsilon^{k}_j$ are canonical basis column vectors, whose $i-$th  and $j-$th component, respectively is $1$ and all others are zero. Note that it contains a identity matrix of dimension $(2n-4)\times (2n-4)$. Therefore the determinant is the same as of the following matrix
\begin{equation*}
\left( \begin{array}{cccccc}
0 & 0 & \epsilon^{2n-4}_i &\epsilon^{2n-4}_j\\ 
\frac{\partial C_{2n-3}}{\partial t_{2n-3}}&\frac{\partial C_{2n-2}}{\partial t_{2n-3}}& \epsilon^{2n-3}_i &\epsilon^{2n-3}_j\\ 
\frac{\partial C_{2n-3}}{\partial x_1}&\frac{\partial C_{2n-2}}{\partial x_1}  & \epsilon^{2n-2}_i &\epsilon^{2n-2}_j\\    
\frac{\partial C_{2n-3}}{\partial x_2}&\frac{\partial C_{2n-2}}{\partial x_2}& \epsilon^{2n-1}_i &\epsilon^{2n-1}_j\\
\frac{\partial C_{2n-3}}{\partial x_3}&\frac{\partial C_{2n-2}}{\partial x_3}&\epsilon^{2n}_i &\epsilon^{2n}_j
\end{array} 
\right)
\end{equation*}

which gives the coordinates of the Poisson bivector associated to $f_o$. 
\medskip

When $f$ is a real map, we take the coordinates functions as Casimir functions:
\begin{eqnarray*}
C_i &=&t_i \quad 1\leq i \leq 2n-3\\
C_{2n-2} &= &f_o(t_1, \dots, t_{2n-3}, x_1, x_2, x_3)
\end{eqnarray*}

The differential matrix of the map is
\begin{equation}\label{array:differential-Casimir-N-dimR}
\left( \begin{array}{cccccc}
1 &0 & 0 & 0& \frac{\partial C_{2n-2}}{\partial t_1}\\
\vdots &\ddots & \vdots & \vdots& \vdots\\
0 & \cdots & 1 & 0 & \frac{\partial C_{2n-2}}{\partial t_{2n-4}}\\ 
0 &\cdots &0& 1&\frac{\partial C_{2n-2}}{\partial t_{2n-3}}\\ 
0 &\cdots &0 &0&\frac{\partial C_{2n-2}}{\partial x_1} \\    
0 &\cdots &0&0&\frac{\partial C_{2n-2}}{\partial x_2}\\
0 &\cdots &0& 0 &\frac{\partial C_{2n-2}}{\partial x_3}
\end{array} 
\right)
\end{equation}

Then, the coefficients of the corresponding bivector matrix are given by
\begin{equation*}
\pi^{ij}=Det\left[
\left( \begin{array}{cccccccc}
1 &0 & 0 & 0& \frac{\partial C_{2n-2}}{\partial t_1}& \epsilon^1_i &\epsilon^{1}_j\\
\vdots &\ddots & \vdots & \vdots& \vdots& \vdots& \vdots\\
0 & \cdots & 1 & 0 & \frac{\partial C_{2n-2}}{\partial t_{2n-4}} & \epsilon^{2n-4}_i &\epsilon^{2n-4}_j\\ 
0 &\cdots &0& 1 &\frac{\partial C_{2n-2}}{\partial t_{2n-3}}& \epsilon^{2n-3}_i &\epsilon^{2n-3}_j\\ 
0 &\cdots &0 &0 &\frac{\partial C_{2n-2}}{\partial x_1}  & \epsilon^{2n-2}_i &\epsilon^{2n-2}_j\\    
0 &\cdots &0&0&\frac{\partial C_{2n-2}}{\partial x_2}& \epsilon^{2n-1}_i &\epsilon^{2n-1}_j\\
0 &\cdots &0& 0 &\frac{\partial C_{2n-2}}{\partial x_3}&\epsilon^{2n}_i &\epsilon^{2n}_j
\end{array} 
\right)\right]
\end{equation*}

We note that $\pi^{ij}=0$ for $1\leq i\leq 2n-4$ and $1\leq j\leq 2n-4$. The rest of the coefficients can be computed with the following
\begin{equation*}
Det\left[
\left( \begin{array}{ccccc}
1 &0 & \epsilon^{2n-3}_i &\epsilon^{2n-3}_j\\ 
0 &\frac{\partial C_{2n-2}}{\partial x_1}  & \epsilon^{2n-2}_i &\epsilon^{2n-2}_j\\    
0&\frac{\partial C_{2n-2}}{\partial x_2}& \epsilon^{2n-1}_i &\epsilon^{2n-1}_j\\
0 &\frac{\partial C_{2n-2}}{\partial x_3}&\epsilon^{2n}_i &\epsilon^{2n}_j
\end{array} 
\right)\right]
\end{equation*}

In fact, the only nonzero coefficients are:
\begin{eqnarray*}
\pi^{23}&=&\frac{\partial C_{2n-2}}{\partial x_3}\\
\pi^{24}&=&\frac{\partial C_{2n-2}}{\partial x_2}\\
\pi^{34}&=&\frac{\partial C_{2n-2}}{\partial x_3}
\end{eqnarray*}

The result follows.
\end{proof}
%
%

\subsection{Poisson structures on generalized wrinkled fibrations in dimension $6$.}\label{SS:Poisson-Sym-Wrinkled}
We apply the general criterion presented above to the case of wrinkled fibrations on 6-manifolds. Let $q\in M$ be a point, and $k:M\to X, k(t_1, t_2, t_3, x_1, x_2, x_3)$, be a non-vanishing smooth function. 

%
%
\subsubsection{Poisson bivector near a fold singularity.}\label{SSS:SingularFold}
\noindent {\bf Indefinite fold}
\\
The local coordinate model around a fold singularity is given by the map:
\begin{equation*}
(t_1, t_2, t_3, x_1, x_2, x_3) \mapsto (t_1, t_2, t_3, -x_1^2 + x_2^2 + x_3^2)
\end{equation*}
The resulting Poisson structure of a fold singularity is given by:
\begin{equation}\label{biv:fold}
\pi = k\left[ 2x_3\frac{\partial}{\partial x_2}\wedge\frac{\partial}{\partial x_1} - 2x_2 \frac{\partial}{\partial x_3}\wedge \frac{\partial}{\partial x_1} -2x_1\frac{\partial}{\partial x_3}\wedge \frac{\partial}{\partial x_2} \right] 
\end{equation}

\noindent {\bf Definite fold}
\\
In addition, we also compute the Poisson bivector for definite singularities for each wrinkled fibration. In this case, they are locally modeled by (\ref{eqn:fold-def1}) and (\ref{eqn:fold-def2}):
\begin{equation}\label{eqn:fold-def1}(t_1, t_2, t_3, x_1, x_2, x_3)\mapsto (t_1, t_2, t_3, x_1^2+x_2^2+x_3^2)
\end{equation}

\begin{equation}\label{eqn:fold-def2}(t_1, t_2, t_3, x_1, x_2, x_3)\mapsto (t_1, t_2, t_3, x_1^2+x_2^2+x_3^2)
\end{equation}
Following the general computations as above, the Poisson bivectors are, respectively:
\begin{equation}\label{biv:fold-def1}
\pi = k\left[ 2x_3\frac{\partial}{\partial x_2}\wedge\frac{\partial}{\partial x_1} - 2x_2 \frac{\partial}{\partial x_3}\wedge \frac{\partial}{\partial x_1} +2x_1\frac{\partial}{\partial x_3}\wedge \frac{\partial}{\partial x_2} \right] \end{equation}
and 

\begin{equation}\label{biv:fold-def2}
\pi = k\left[ -2x_3\frac{\partial}{\partial x_2}\wedge\frac{\partial}{\partial x_1} + 2x_2 \frac{\partial}{\partial x_3}\wedge \frac{\partial}{\partial x_1} +2x_1\frac{\partial}{\partial x_3}\wedge \frac{\partial}{\partial x_2} \right] \end{equation}

%
%
\subsubsection{Poisson bivector near a cusp singularity.}\label{SSS:SingularCusp}
\noindent {\bf Indefinite cusp}
\\
The local coordinate model around a cusp singularity is given by:
\begin{equation*}
(t_1, t_2, t_3, x_1, x_2, x_3) \mapsto (t_1, t_2, t_3, x_1^3 - 3 t_1x_1 + x_2^2 - x_3^2)
\end{equation*}
The Poisson bivector in the local coordinates of a cusp singularity is given by:
\begin{equation}\label{biv:cusp}
\pi = k\left[ -2x_3\frac{\partial}{\partial x_2}\wedge\frac{\partial}{\partial x_1} - 2x_2 \frac{\partial}{\partial x_3}\wedge \frac{\partial}{\partial x_1} +3(x_1^2-t_1)\frac{\partial}{\partial x_3}\wedge \frac{\partial}{\partial x_2} \right] \end{equation}

\noindent {\bf Definite cusp}
\\
For definite singularities in cusps, we obtain in each case (\ref{eqn:cusp-def1}) and (\ref{eqn:cusp-def2}):
\begin{equation}\label{eqn:cusp-def1}(t_1, t_2, t_3, x_1, x_2, x_3)\mapsto (t_1, t_2, t_3, x_1^3-3t_1x_1 +x_2^2+ x_3^2)
\end{equation}
\begin{equation}\label{eqn:cusp-def2}(t_1, t_2, t_3, x_1, x_2, x_3)\mapsto (t_1, t_2, t_3, x_1^3-3t_1x_1 -x_2^2- x_3^2)
\end{equation}
The corresponding bivectors are, respectively:
\begin{equation}\label{biv:cusp-def1}
\pi = k\left[ 2x_3\frac{\partial}{\partial x_2}\wedge\frac{\partial}{\partial x_1} - 2x_2 \frac{\partial}{\partial x_3}\wedge \frac{\partial}{\partial x_1} +3(x_1^2-t_1)\frac{\partial}{\partial x_3}\wedge \frac{\partial}{\partial x_2} \right] \end{equation}
and
\begin{equation}\label{biv:cusp-def2}
\pi = k\left[ -2x_3\frac{\partial}{\partial x_2}\wedge\frac{\partial}{\partial x_1} + 2x_2 \frac{\partial}{\partial x_3}\wedge \frac{\partial}{\partial x_1} +3(x_1^2-t_1)\frac{\partial}{\partial x_3}\wedge \frac{\partial}{\partial x_2} \right] \end{equation}

%
%
\subsubsection{Poisson bivector near a swallowtail singularity.}\label{SSS:SingularSwallowtail}
\noindent {\bf Indefinite swallowtail}
\\
The local coordinate model around a swallowtail singularity is given by the map:
\begin{equation*}
(t_1, t_2, t_3, x_1, x_2, x_3) \mapsto (t_1, t_2, t_3, x_1^4 + t_1 x_1^2 + t_2 x_1 + x_2^2 - x_3^2)
\end{equation*}
The Poisson bivector in the local coordinates of a swallowtail singularity is described by:
\begin{equation}\label{biv:swallowtail}
\pi = k\left[ -2x_3\frac{\partial}{\partial x_2}\wedge\frac{\partial}{\partial x_1} - 2x_2 \frac{\partial}{\partial x_3}\wedge \frac{\partial}{\partial x_1} +(4x_1^3+2t_1x_1+t_2)\frac{\partial}{\partial x_3}\wedge \frac{\partial}{\partial x_2} \right] 
\end{equation}

\noindent {\bf Definite swallowtail}
\\
For definite singularities:
\begin{equation}\label{eqn:swallowtail-def1}(t_1, t_2, t_3, x_1, x_2, x_3)\mapsto (t_1, t_2, t_3, x_1^4+t_1x_1^2+t_2x_1+x_2^2+x_3^2)
\end{equation}
\begin{equation}\label{eqn:swallowtail-def2}(t_1, t_2, t_3, x_1, x_2, x_3)\mapsto (t_1, t_2, t_3, x_1^4+t_1x_1^2+t_2x_1-x_2^2-x_3^2)
\end{equation}
The corresponding bivectors are, respectively:

\begin{equation}\label{biv:swallowtail-def1}
\pi = k\left[ 2x_3\frac{\partial}{\partial x_2}\wedge\frac{\partial}{\partial x_1} - 2x_2 \frac{\partial}{\partial x_3}\wedge \frac{\partial}{\partial x_1} +(4x_1^3+2t_1x_1+t_2)\frac{\partial}{\partial x_3}\wedge \frac{\partial}{\partial x_2} \right] 
\end{equation}
and 
\begin{equation}\label{biv:swallowtail-def2}
\pi = k\left[ -2x_3\frac{\partial}{\partial x_2}\wedge\frac{\partial}{\partial x_1} + 2x_2 \frac{\partial}{\partial x_3}\wedge \frac{\partial}{\partial x_1} +(4x_1^3+2t_1x_1+t_2)\frac{\partial}{\partial x_3}\wedge \frac{\partial}{\partial x_2} \right] 
\end{equation}

%
%
\subsubsection{Poisson bivector near a butterfly singularity.}\label{SSS:Singularbutterfly}
\noindent {\bf Indefinite butterfly}
\\
The local coordinate model around a buttterfly singularity is given by:
\begin{equation*}
(t_1, t_2, t_3, x_1, x_2, x_3) \mapsto (t_1, t_2, t_3, x_1^5 + t_1 x_1^3 + t_2 x_1^2 + t_3 x_1 + x_2^2 - x_3^2)
\end{equation*}
The Poisson bivector in the local coordinates of a butterfly singularity is described by:
\begin{equation}\label{biv:butterfly}
\pi = k\left[ -2x_3\frac{\partial}{\partial x_2}\wedge\frac{\partial}{\partial x_1} - 2x_2 \frac{\partial}{\partial x_3}\wedge \frac{\partial}{\partial x_1} +(5 x_1^4+3 t_1 x_1^2+2 t_2 x_1+t_3) \frac{\partial}{\partial x_3}\wedge \frac{\partial}{\partial x_2} \right] 
\end{equation}

\noindent {\bf Definite butterfly}
The singularity is modeled by the coordinates:
\begin{equation}\label{eqn:butterfly-def1}
(t_1, t_2, t_3, x_1, x_2, x_3) \mapsto (t_1, t_2, t_3, x_1^5 + t_1 x_1^3 + t_2 x_1^2 + t_3 x_1 + x_2^2 + x_3^2)\end{equation}

\begin{equation}\label{eqn:butterfly-def2}(t_1, t_2, t_3, x_1, x_2, x_3) \mapsto (t_1, t_2, t_3, x_1^5 + t_1 x_1^3 + t_2 x_1^2 + t_3 x_1 - x_2^2 - x_3^2)
\end{equation}
The corresponding bivectors are, respectively:
\begin{equation}\label{biv:butterfly-def1}
\pi = k\left[ 2x_3\frac{\partial}{\partial x_2}\wedge\frac{\partial}{\partial x_1} - 2x_2 \frac{\partial}{\partial x_3}\wedge \frac{\partial}{\partial x_1} +(5 x_1^4+3 t_1 x_1^2+2 t_2 x_1+t_3) \frac{\partial}{\partial x_3}\wedge \frac{\partial}{\partial x_2} \right] 
\end{equation}
and 
\begin{equation}\label{biv:butterfly-def2}
\pi = k\left[ -2x_3\frac{\partial}{\partial x_2}\wedge\frac{\partial}{\partial x_1} + 2x_2 \frac{\partial}{\partial x_3}\wedge \frac{\partial}{\partial x_1} +(5 x_1^4+3 t_1 x_1^2+2 t_2 x_1+t_3) \frac{\partial}{\partial x_3}\wedge \frac{\partial}{\partial x_2} \right] 
\end{equation}

%
%
%
\subsection{Poisson bivectors on higher dimensional type $2n$ generalized wrinkled fibrations.}

Lekili defined $4$ moves, these include all the possible $1$--parameter deformations of broken and wrinkled fibrations up to homotopy (see \cite{L09}). Lekili showed that any $1$-parameter family deformation of a purely wrinkled fibration is homotopic (relative endpoints) to one which realises a sequence of births, merges, flips, their inverses, and isotopies staying within the class of purely wrinkled fibrations. For higher dimensions, we will introduce a generalized form of these deformations. We will use them to give local expressions for the associated Poisson bivectors and symplectic forms near singularities described by the deformations.
\medskip

Consider the following maps $\mathbb{R} \times \mathbb {R}^{2n-1} \to \mathbb{R}^{2n-2}$, given by the equations below, and each depending on a real parameter $s$:

\begin{equation}\label{eqn:bs}
b_s(t_1, \dots, t_{2n-3}, x_1, x_2, x_3)=(t_1, \dots, t_{2n-3}, x_1^3-3x_1(t_{2n-3}^2-s)+x_2^2-x_3^2)
\end{equation}

\begin{equation}\label{eqn:ms}
m_s(t_1, \dots, t_{2n-3}, x_1, x_2, x_3)=(t_1, \dots, t_{2n-3}, x_1^3-3x_1(s-t_{2n-3}^2)+x_2^2-x_3^2)
\end{equation}
\begin{equation}\label{eqn:fs}
f_s(t_1, \dots, t_{2n-3}, x_1, x_2, x_3)=(t_1, \dots, t_{2n-3}, x_1^4-x_1^2s+x_1t_{2n-3}+x_2^2-x_3^2)
\end{equation}
\begin{equation}\label{eqn:ws}
w_s(t_1, \dots, t_{2n-3}, x_1, x_2, x_3)=(t_1, \dots, t_{2n-4}, t_{2n-3}^2-x_1^2+x_2^2-x_3^2+st_{2n-3}, 2t_{2n-3}x_1+2x_2x_3)
\end{equation}

We will also need a generalized winkled fibration for dimensions greater than $6$. 
\begin{definition}
Let $M$ be a smooth $2n$--manifold, and $X$ be a smooth closed $2n-2$--manifold. A type $2n$-wrinkled fibration is a smooth map $f:M\to X$ that is a sumbersion with the following four indefinite singularities each locally modelled by real charts $\mathbb{R}^{2n}\to\mathbb{R}^{2n-2}$
\begin{enumerate}

\item folds
$$(t_1, \dots, t_{2n-3}, x_1, x_2, x_3) \mapsto (t_1, \dots, t_{2n-3 }, -x_1+x_2^2+x_3^2)$$

 \item cusps 
$$(t_1, \dots, t_{2n-3}, x_1, x_2, x_3) \mapsto (t_1, \dots, t_{2n-3 }, x_1^3-3t_1\cdot x_1+x_2^2-x_3^2)$$

\item swallowtails
$$(t_1, \dots, t_{2n-3}, x_1, x_2, x_3) \mapsto (t_1, \dots, t_{2n-3 }, x_1^4+t_1x_1^2+t_2x_1+x_2^2-x_3^2)$$

\item butterflies
$$(t_1, \dots, t_{2n-3}, x_1, x_2, x_3) \mapsto (t_1, \dots, t_{2n-3 }, x_1^5+t_1x_1^3+t_2x_1^2+t_3x_1+x_2^2-x_3^2)$$
\end{enumerate}
\end{definition}

\begin{corollary}\label{Cor:localPoisson}
For a non-vanishing smooth function $k$ in $C^{\infty}(M)$ we have the following consequences:

\noindent {\em (1)} Let $X$ be a closed smooth oriented and connected $2n$-manifold, and $f : M \to X$ a generalized broken Lefschetz fibration. The Poisson structures in a neighborhood of the two type of singularities can be computed to obtain Poisson bivectors near the following singularities
\\
\noindent {\bf  Lefschetz-type singularity}

\begin{equation*}
\pi=k\left[(x_2^2+x_3^2)\frac{\partial}{\partial t_{2n-3}}\wedge \frac{\partial }{\partial x_1}+(x_1 x_2- t_{2n-3} x_3) \frac{\partial}{\partial t_{2n-3}}\wedge \frac{\partial}{\partial x_2}-(t_{2n-3} x_2+x_1 x_3) \frac{\partial}{\partial t_{2n-3}}\wedge \frac{\partial}{\partial x_3}\right.
\end{equation*}
\begin{equation*}
\quad \quad+\left.(t_{2n-3}x_2+x_1x_3)\frac{\partial}{\partial x_1}\wedge \frac{\partial}{\partial x_2}+(x_1x_2- t_{2n-3}x_3)\frac{\partial}{\partial x_1}\wedge \frac{\partial}{\partial x_3}+( t_{2n-3}^2+x_1^2)\frac{\partial}{\partial x_2}\wedge \frac{\partial}{\partial x_3}\right]
\end{equation*}

\noindent {\bf  Indefinite fold singularity}
\begin{equation*}
\pi=k\left[x_1\frac{\partial}{\partial x_2}\wedge\frac{\partial}{\partial x_3}+x_2\frac{\partial }{\partial x_1}\wedge\frac{\partial}{\partial x_3}-x_3\frac{\partial}{\partial x_1}\wedge\frac{\partial }{\partial x_2}\right]
\end{equation*}

\noindent {\em (2)} Let $M$ be a closed, orientable, smooth $2n$-manifold endowed with a type $2n$-wrinkled fibration $f$ to a closed $2n-2$ manifold $X$. Then a complete Poisson structure is given by the following bivectors near the corresponding singularities:
\medskip

\noindent {\bf Fold}
\begin{equation*}
\pi = k\left[ 2x_3\frac{\partial}{\partial x_2}\wedge\frac{\partial}{\partial x_1} - 2x_2 \frac{\partial}{\partial x_3}\wedge \frac{\partial}{\partial x_1} -2x_1\frac{\partial}{\partial x_3}\wedge \frac{\partial}{\partial x_2} \right] \end{equation*}

\noindent {\bf Cusp}
\begin{equation*}
\pi = k\left[ -2x_3\frac{\partial}{\partial x_2}\wedge\frac{\partial}{\partial x_1} - 2x_2 \frac{\partial}{\partial x_3}\wedge \frac{\partial}{\partial x_1} +3(x_1^2-t_{2n-5})\frac{\partial}{\partial x_3}\wedge \frac{\partial}{\partial x_2} \right] \end{equation*}

\noindent {\bf Swallowtail}
\begin{equation*}
\pi = k\left[ -2x_3\frac{\partial}{\partial x_2}\wedge\frac{\partial}{\partial x_1} - 2x_2 \frac{\partial}{\partial x_3}\wedge \frac{\partial}{\partial x_1} +3(4x_1^3+2t_{2n-5}x_1+t_{2n-4})\frac{\partial}{\partial x_3}\wedge \frac{\partial}{\partial x_2} \right] 
\end{equation*}

\noindent {\bf Butterfly}
\begin{equation*}
\pi = k\left[ -2x_3\frac{\partial}{\partial x_2}\wedge\frac{\partial}{\partial x_1} - 2x_2 \frac{\partial}{\partial x_3}\wedge \frac{\partial}{\partial x_1} +(5 x_1^4+3 t_{2n-5} x_1^2+2 t_{2n-4} x_1+t_{2n-3}) \frac{\partial}{\partial x_3}\wedge \frac{\partial}{\partial x_2} \right] 
\end{equation*}

\noindent The following equations depend on a real parameter $s$. Near a singularity locally modeled by the $b_s, m_s, f_s,$ and $w_s$ the corresponding Poisson bivectors are 

\noindent {\bf Map}  $b_s$ 
\begin{equation*}\pi_s = k\left[ 2x_3\frac{\partial}{\partial x_1}\wedge\frac{\partial}{\partial x_2} + 2x_2 \frac{\partial}{\partial x_1}\wedge \frac{\partial}{\partial x_3} - 3(s-t_{2n-3}^2+x_1^2)\frac{\partial}{\partial x_2}\wedge \frac{\partial}{\partial x_3} \right] \end{equation*}

\noindent {\bf Map}  $m_s$ 
\begin{equation*}\pi_s =k\left[ 2x_3\frac{\partial}{\partial x_1}\wedge\frac{\partial}{\partial x_2} + 2x_2\frac{\partial}{\partial x_1}\wedge \frac{\partial}{\partial x_3} - 3(s-t_{2n-3}^2-x_1^2)\frac{\partial}{\partial x_2}\wedge \frac{\partial}{\partial x_3} \right] \end{equation*}

\noindent {\bf Map}  $f_s$ 
\begin{equation*}\pi_s = k\left[2x_3\frac{\partial}{\partial x_1}\wedge\frac{\partial}{\partial x_2} + 2x_2\frac{\partial}{\partial x_1}\wedge \frac{\partial}{\partial x_3} - (t_{2n-3}-2sx_1+4x_1^3)\frac{\partial}{\partial x_2}\wedge \frac{\partial}{\partial x_3} \right] \end{equation*}

\noindent  {\bf Map} $w_s$  
\begin{equation*}\pi_s  = k \left[ (-2 s x_2 - 4 t_{2n-3} x_2 - 4 x_1 x_3)\frac{\partial}{\partial x_1}\wedge\frac{\partial}{\partial x_2} + (-4 x_1 x_2 + 2 s x_3 + 4 t_{2n-3}x_3)\frac{\partial}{\partial x_1}\wedge \frac{\partial}{\partial x_3}  \right.\end{equation*}
\begin{equation*}
\quad \quad +(4 x_2^2 + 4 x_3^2)\frac{\partial}{\partial x_1}\wedge \frac{\partial}{\partial t_{2n-3}} - (2 s t_{2n-3} + 4 t_{2n-3}^2 + 4 x_1^2) \frac{\partial}{\partial x_2}\wedge \frac{\partial}{\partial x_3}
\end{equation*}
\begin{equation*} \quad \quad \left. + 4(x_1 x_2 - t_{2n-3} x_3) \frac{\partial}{\partial x_2}\wedge \frac{\partial}{\partial t_{2n-3}} -4(t_{2n-3} x_2 + x_1 x_3)\frac{\partial}{\partial x_3}\wedge \frac{\partial}{\partial t_{2n-3}}  \right]\end{equation*}

\end{corollary}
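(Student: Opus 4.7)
The plan is to apply Proposition \ref{bivector-criterion}, or a mild extension thereof, to each of the local normal forms listed in the statement. For every item except the map $w_s$, the local model has the shape $(t_1, \dots, t_{2n-3}, x_1, x_2, x_3) \mapsto (t_1, \dots, t_{2n-3}, f_o)$, where $f_o$ depends only on a few of the $t_i$'s (playing the role of parameters) and on $x_1, x_2, x_3$. In that setting Proposition \ref{bivector-criterion} tells us that the only nonzero bivector coefficients are those among pairs from $\{x_1,x_2,x_3\}$, and each equals, up to sign and the overall factor $k$, the partial derivative of $f_o$ with respect to the remaining $x$-coordinate. Matching the stated formulas for fold, cusp, swallowtail, butterfly, and the deformation maps $b_s$, $m_s$, $f_s$ then reduces to reading off $\partial_{x_1}f_o$, $\partial_{x_2}f_o = \pm 2x_2$ and $\partial_{x_3}f_o = \pm 2x_3$ case by case, which is immediate.

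For the Lefschetz-type singularity we invoke the complex case of Proposition \ref{bivector-criterion}. Writing $z_{n-1} = t_{2n-3} + i x_1$ and $z_n = x_2 + i x_3$, I would take
\begin{equation*}
C_{2n-3} = \operatorname{Re}(z_{n-1}^2+z_n^2) = t_{2n-3}^2 - x_1^2 + x_2^2 - x_3^2, \quad C_{2n-2} = \operatorname{Im}(z_{n-1}^2+z_n^2) = 2t_{2n-3}x_1 + 2x_2 x_3.
\end{equation*}
The six pairs among $\{t_{2n-3}, x_1, x_2, x_3\}$ produce six $2\times 2$ Jacobian minors built from $dC_{2n-3}$ and $dC_{2n-2}$, and direct expansion reproduces the six coefficients in the stated Lefschetz-type bivector. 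An even simpler three-minor computation using $d(-x_1^2+x_2^2+x_3^2)$ recovers the indefinite fold formula under part (1).

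The map $w_s$ is the one case whose last \emph{two} components are both nontrivial fibrewise, and so it does not fall verbatim under the hypotheses of Proposition \ref{bivector-criterion}. I would extend the proposition by taking both Casimirs $C_{2n-3}, C_{2n-2}$ to depend on $(t_{2n-3}, x_1, x_2, x_3)$; the corresponding $4\times 4$ sub-Jacobian then has rank two, and each of the six $2\times 2$ minors formed from $dC_{2n-3}$ and $dC_{2n-2}$ contributes one nonzero Poisson coefficient. Expanding these six determinants is the computational heart of the proof and is where the terms such as $-2sx_2-4t_{2n-3}x_2-4x_1x_3$ and $-2st_{2n-3}-4t_{2n-3}^2-4x_1^2$ appear; this is the step I expect to be the main obstacle.

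Finally, Poisson-ness of every bivector above is automatic by Proposition \ref{Prop:rank2}(i), since each has rank at most two and is of the form \eqref{E:Def-Intrinsic}. Completeness follows as in the proof of Theorem \ref{wrinkled-poisson}: the Casimirs correspond locally to the coordinates of the base, the symplectic leaves sit inside fibres of $f$, and since $M$ is closed every such leaf has compact closure, so every Hamiltonian vector field is complete.
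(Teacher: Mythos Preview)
Your proposal is correct and follows essentially the same route as the paper. The paper's proof (deferred to Appendix A) carries out precisely the determinant computations you outline: for each local model it writes down the differential matrix of the Casimirs, applies the formula \eqref{E:Def-Intrinsic} from Theorem~\ref{T:Const-Poisson}, and reads off the $2\times2$ Jacobian minors as bivector coefficients; Proposition~\ref{bivector-criterion} is exactly the packaging of this procedure that you invoke. Your remark that $w_s$ requires the mild extension in which both $C_{2n-3}$ and $C_{2n-2}$ depend on $(t_{2n-3},x_1,x_2,x_3)$ is accurate and handled in the paper in the same spirit (this is the ``complex'' branch of Proposition~\ref{bivector-criterion}, just as for the Lefschetz-type singularity).
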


The proof of this last result is included in appendix A.

%
%
\section{Symplectic forms on leaves of generalized wrinkled fibrations}\label{S:symp-forms}

\subsection{General criterion for constructing symplectic forms on leaves near the singularities}

\begin{theorem}
Under the hypothesis of Proposition \ref{bivector-criterion}, the symplectic form induced by the Poisson structure $\pi$ on the symplectic leaf $\Sigma_q$ through $q\neq 0$ is completely determined by the Poisson structure of the  map $f_o$. That is, if $u_q, v_q$ are tangent vectors to the leaves, then:
\begin{equation*}
\omega_{\Sigma_q}(u_q, v_q)=\omega_o(\tilde u_q, \tilde v_q)
\end{equation*}

where $\omega_o$ is the symplectic structure of $f_o$, and $\tilde u_q, \tilde v_q$ are the tangent vectors $u_q$ and $v_q$ restricted to the last 4 coordinates.
\end{theorem}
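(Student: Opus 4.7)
The plan is to exploit the block structure of the Poisson bivector established in Proposition~\ref{bivector-criterion} in order to reduce the evaluation of $\omega_{\Sigma_q}$ to a four-dimensional calculation governed entirely by $f_o$.

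First, I would note that the anchor $\mathcal{B}_q : T_q^{\ast}M \to T_qM$ is given by left-multiplication by the matrix $\pi(q)$, whose entries vanish outside the $4\times 4$ block corresponding to the ``active'' coordinates of $f_o$. Consequently the image of $\mathcal{B}_q$ is contained in the four-dimensional subspace $V_q \subset T_qM$ spanned by the partial derivatives with respect to these coordinates (that is, by $\partial/\partial t_{2n-3}, \partial/\partial x_1, \partial/\partial x_2, \partial/\partial x_3$ in the real case, and by the real and imaginary parts of $\partial/\partial z_{n-1}, \partial/\partial z_n$ in the complex case). Since $T_q\Sigma_q$ equals the image of $\mathcal{B}_q$ by Stefan--Sussmann, we have $T_q\Sigma_q \subseteq V_q$. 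Writing $\tilde u_q, \tilde v_q$ for the images of $u_q, v_q \in T_q\Sigma_q$ under the canonical identification $V_q \cong \mathbb{R}^4$, this restriction is therefore well defined.

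Next, given $u_q, v_q \in T_q\Sigma_q$, I would select $\alpha_q, \beta_q \in T_q^{\ast}M$ with $\mathcal{B}_q(\alpha_q) = u_q$ and $\mathcal{B}_q(\beta_q) = v_q$ and invoke formula \eqref{E:Symp-form-gen}, obtaining
$$\omega_{\Sigma_q}(u_q, v_q) = \pi_q(\alpha_q, \beta_q).$$
Because the first $2n-4$ rows and columns of $\pi(q)$ vanish identically, the pairing $\pi_q(\alpha_q, \beta_q)$ depends only on the last four entries of $\alpha_q$ and $\beta_q$; the remaining entries may be chosen arbitrarily and in any case are annihilated by $\mathcal{B}_q$. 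Denoting by $\tilde\alpha_q, \tilde\beta_q$ the covectors in the four-dimensional ambient space of $f_o$ agreeing with these last four entries, and writing $\pi^o$ for the bivector of $f_o$ (which literally equals the $4\times 4$ block of $\pi$), we have $\pi_q(\alpha_q, \beta_q) = \pi^o_q(\tilde\alpha_q, \tilde\beta_q)$. Applying formula \eqref{E:Symp-form-gen} now to $f_o$ itself yields $\pi^o_q(\tilde\alpha_q, \tilde\beta_q) = \omega_o(\tilde u_q, \tilde v_q)$, and chaining the equalities gives the claim.

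The assumption $q \neq 0$ is used to guarantee that the symplectic leaf through $q$ is two-dimensional, so that the inversion procedure implicit in \eqref{E:Symp-form-gen} actually produces a nondegenerate form on $T_q\Sigma_q$. The main obstacle in the argument is essentially bookkeeping: one must verify that the truncation $\alpha_q \mapsto \tilde\alpha_q$ is simultaneously compatible with both anchor maps, but since the $4\times 4$ block of $\mathcal{B}_q$ coincides with the anchor map of $\pi^o$, and the kernel of $\mathcal{B}_q$ contains every covector supported in the first $2n-4$ slots, this compatibility is automatic from Proposition~\ref{bivector-criterion}.
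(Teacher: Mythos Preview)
Your argument is correct and reaches the same conclusion as the paper, but the route you take is noticeably more streamlined. The paper's proof proceeds by explicitly computing the kernel of the transpose of the Jacobian of the Casimir functions, treating the complex and real cases separately; it then writes out the resulting linear systems for the tangent vectors $u_q,v_q$ and for the covector $\alpha_q$ satisfying $\pi\cdot\alpha_q=u_q$, and finally evaluates $\langle\alpha_q,v_q\rangle$. Your approach bypasses this case analysis by observing directly that the block structure of $\pi$ forces $\mathrm{im}\,\mathcal{B}_q\subseteq V_q$ and that $\pi_q(\alpha_q,\beta_q)$ only sees the last four components of its arguments; this lets you invoke \eqref{E:Symp-form-gen} twice (once for $\pi$ and once for $\pi^o$) and chain the equalities without ever writing down the linear systems explicitly. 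The paper's version has the advantage of producing, as a by-product, the concrete formulae \eqref{eqn:kernel-generalized}--\eqref{eqn:symplectic-alpha-real} that feed into the explicit calculations of Corollaries~\ref{Cor:LocalSymplectic1} and~\ref{Cor:LocalSymplectic2}; your version is cleaner as a proof of the theorem itself but would still require those computations to be done separately if one wants the closed-form expressions for $\omega_{\Sigma_q}$.
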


\begin{proof}
First, we have to obtain vectors tangent to the leaves. That is, we want to find vectors such that they are annhilated simultaneously by the $2n-2$ Casimir functions. Then we transpose the matrix and compute its null space. 
\medskip

In the case when $f$ is a complex map, we used its real and imaginary parts of each coordinate function as Casimir functions. We obtained the matrix (\ref{array:differential-Casimir-N-dimC}) whose transpose matrix is:
\begin{equation*}
D^{T}=\left( \begin{array}{cccccccc}
1 &\cdots &0 &0 &0 &0 &0\\
\vdots &\ddots & \vdots & \vdots& \vdots& \vdots& \vdots\\
0 & \cdots & 1 & 0 & 0 &0 &0\\ 
0 &\cdots &0& \frac{\partial C_{2n-3}}{\partial t_{2n-3}}& \frac{\partial C_{2n-3}}{\partial x_1}& \frac{\partial C_{2n-3}}{\partial x_2}& \frac{\partial C_{2n-3}}{\partial x_3}\\ 
0 &\cdots &0 &\frac{\partial C_{2n-2}}{\partial t_{2n-3}}&\frac{\partial C_{2n-2}}{\partial x_1} & \frac{\partial C_{2n-2}}{\partial x_2}& \frac{\partial C_{2n-2}}{\partial x_3}
\end{array} 
\right)
\end{equation*}

Note that its left upper block is  an identity matrix of dimension $2n-4$.

Let 
\begin{eqnarray*}
\partial C_{2n-3}:&=& \left(0, \dots 0, \frac{\partial C_{2n-3}}{\partial t_{2n-3}}, \frac{\partial C_{2n-3}}{\partial x_1}, \frac{\partial C_{2n-3}}{\partial x_2}, \frac{\partial C_{2n-3}}{\partial x_3}\right)\\
\partial C_{2n-3}:&=&  \left(0, \dots 0, \frac{\partial C_{2n-2}}{\partial t_{2n-3}}, \frac{\partial C_{2n-2}}{\partial x_1}, \frac{\partial C_{2n-2}}{\partial x_2}, \frac{\partial C_{2n-2}}{\partial x_3}\right)
\end{eqnarray*}

Then a vector $a=(a_1, a_2, \dots, a_{2n})$ belongs to  $Ker(D^T)$  if and only if:
\begin{eqnarray*}
\langle \partial C_{2n-3}, a\rangle=0\\
\langle \partial C_{2n-2}, a\rangle=0
\end{eqnarray*}

Observe that the first $2n-4$ entries of $a$ equal zero. Then, $a\in Ker (D^T)$ if 
\begin{equation*}
a=(0, 0, \dots, 0, a_{2n-3}, a_{2n-2}, a_{2n-1}, a_{2n}),
\end{equation*}
 where the coefficients $a_{2n-3}, a_{2n-2}, a_{2n-1}, a_{2n}$ are determined by the equations:
\begin{equation}\label{eqn:kernel-generalized}
\left\{\begin{array}{lcl}
a_{2n-3} \frac{\partial C_{2n-3}}{\partial t_{2n-3}}+a_{2n-2} \frac{\partial C_{2n-3}}{\partial x_1}+ a_{2n-1}\frac{\partial C_{2n-3}}{\partial x_2}+ a_{2n}\frac{\partial C_{2n-3}}{\partial x_3}=0\\
a_{2n-3} \frac{\partial C_{2n-2}}{\partial t_{2n-3}}+a_{2n-2} \frac{\partial C_{2n-2}}{\partial x_1}+ a_{2n-1}\frac{\partial C_{2n-2}}{\partial x_2}+ a_{2n}\frac{\partial C_{2n-2}}{\partial x_3}=0
\end{array}\right.
\end{equation}

Since the rank of the matrix $D$ is $2n-2$, it has nullity $2$. Therefore there exist two vectors $u_q$ and $v_q$ that  generate all solutions to the previous system. We may assume they are orthogonal. Now, we have to find vectors $\alpha_q, \beta_q$ such that $\mathcal{B}_q(\alpha_q)=u_q$ and $\mathcal{B}_q(\beta_q)=v_q$.
\medskip

To compute the symplectic form it is enough to find $\alpha_q$. In order to compute $\beta_q$ we may proceed similarly. We know that $\alpha$ is the solution to the equation $\mathcal{B_q}(\alpha)(\cdot)=\pi(\cdot, \alpha)=u_q$. 

It is equivalent to consider the system $\pi\cdot\alpha_q=u_q$ and solve for $\alpha_q$. By the previous discussion and recalling the form of the Poisson matrix, if $u_q$, $\alpha_q$ and $v_q$ have coordinates:
\begin{eqnarray*}
u_q&=&(0, 0, \dots, u_{2n-3}, u_{2n-2}, u_{2n-1}, u_{2n})\\
v_q&=&(0, 0, \dots, u_{2n-3}, v_{2n-2}, v_{2n-1}, v_{2n})\\
\alpha_q&=&(\alpha_1, \alpha_2, \dots, \alpha_{2n})
\end{eqnarray*}

This system is reduced to:
\begin{equation}\label{eqn:symplectic-alpha}
\left\{ \begin{array}{lcl}
u_{2n-3}&=&\alpha_{2n-2}\pi^{12}+\alpha_{2n-1}\pi^{13}+\alpha_{2n}\pi^{14}\\
u_{2n-2}&=&-\alpha_{2n-3}\pi^{12}+\alpha_{2n-1}\pi^{23}+\alpha_{2n}\pi^{24}\\
u_{2n-1}&=&-\alpha_{2n-3}\pi^{13}-\alpha_{2n-2}\pi^{23}+\alpha_{2n}\pi^{34}\\
u_{2n}&=&-\alpha_{2n-3}\pi^{14}-\alpha_{2n-2}\pi^{24}-\alpha_{2n-1}\pi^{34}
\end{array}\right.
\end{equation}

Therefore the symplectic form will be given by
\begin{equation*}
\omega_{\Sigma_q}(q)=\langle\alpha_q, v_q\rangle,
\end{equation*}
here $\alpha_q$ is the solution to the system (\ref{eqn:symplectic-alpha}), and $v$ satisfies the system (\ref{eqn:kernel-generalized}). Note that we may choose $\alpha$ with the first $2n-4$ coordinates equal zero.
\medskip

When the map $f$ is real we obtained the matrix (\ref{array:differential-Casimir-N-dimR}). Its transpose is:

\begin{equation*}
D^T=\left( \begin{array}{cccccccc}
1 &\cdots &0 &0 &0 &0 &0\\
0 &\ddots & \vdots & \vdots& \vdots& \vdots& \vdots\\
0 & \cdots & 1 & 0 & 0 &0 &0\\ 
0 &\cdots &0& 1& 0&0&0\\
\frac{\partial C_{2n-2}}{\partial t_1}& \cdots& \frac{\partial C_{2n-2}}{t_{2n-4}}& \frac{\partial C_{2n-2}}{t_{2n-3}}&\frac{\partial C_{2n-2}}{x_1}& \frac{\partial C_{2n-2}}{x_2}& \frac{\partial C_{2n-2}}{x_3}
\end{array} 
\right)
\end{equation*}

Its left upper block is  an identity matrix of dimension $2n-3$. Then $a\in Ker (D^T)$ if $a=(0, 0, \dots, 0, 0, a_{2n-2}, a_{2n-1}, a_{2n})$, where the coefficients $a_{2n-2}, a_{2n-1}, a_{2n}$ are determined by the equation:
\begin{equation*}
a_{2n-2}\frac{\partial C_{2n-2}}{x_1}+ a_{2n-1}\frac{\partial C_{2n-2}}{x_2}+ a_{2n}\frac{\partial C_{2n-2}}{x_3}=0
\end{equation*}

We can give the explicit solutions, they are generated by the vectors:
\begin{equation}\label{eqn:uqvq-expression-real}
u=\{0, 0, \dots, 0, -\frac{\frac{\partial C_{2n-2}}{x_2}}{\frac{\partial C_{2n-2}}{x_1}}, 1, 0\},\\
v=\{0, 0, \dots, 0, -\frac{\frac{\partial C_{2n-2}}{x_3}}{\frac{\partial C_{2n-2}}{x_1}}, 0, 1\}
\end{equation}

Let $u_q=u$ and $v_q=proj_{u}(v)$, the orthogonal projection of $v$ over $u$. Then  $u_q$ and $v_q$ are orthogonal and generate all solutions to the previous system. As before, we know that $\alpha_q$ is the solution to the equation $\mathcal{B_q}(\alpha)(\cdot)=\pi(\cdot, \alpha)=u_q$. 

This is equivalent to solving the system $\pi\cdot\alpha_q=u_q$ for $\alpha_q$. If  $\alpha_q$  has coordinates:
\begin{equation*}
\alpha_q=(\alpha_1, \alpha_2, \dots, \alpha_{2n})
\end{equation*}

this system is reduced to
\begin{equation}\label{eqn:symplectic-alpha-real}
\left\{ \begin{array}{lcl}
-\frac{\frac{\partial C_{2n-2}}{x_2}}{\frac{\partial C_{2n-2}}{x_1}}&=&-\alpha_{2n-3}\pi^{12}+\alpha_{2n-1}\pi^{23}+\alpha_{2n}\pi^{24}\\
1&=&-\alpha_{2n-3}\pi^{13}-\alpha_{2n-2}\pi^{23}+\alpha_{2n}\pi^{34}\\
0&=&-\alpha_{2n-3}\pi^{14}-\alpha_{2n-2}\pi^{24}-\alpha_{2n-1}\pi^{34}
\end{array}\right.
\end{equation}

Therefore the symplectic form will be given by
\begin{equation*}
\omega_{\Sigma_q}(q)=\langle\alpha_q, v_q\rangle
\end{equation*}

where $\alpha_q$ is the solution to the system (\ref{eqn:symplectic-alpha-real}), and $v_q$  has the form (\ref{eqn:uqvq-expression-real}). Note that we may choose $\alpha$ with the first $2n-4$ coordinates equal zero.\end{proof}

\subsection{Symplectic forms on the leaves of generalized wrinkled fibrations in dimension $6$}
As a corollary of the previous theorem we obtain the following result in dimension 6. 

\begin{corollary}\label{Cor:LocalSymplectic1}.
Let $M$ be a closed, orientable, smooth $6$--manifold equipped with a generalized wrinkled fibration $f\colon M\rightarrow X$ on a smooth 4-manifold $X$. Let $(U, (t_1, t_2, t_3, x_1, x_2, x_3))$ be a coordinate neighbourhood of $q\in \textnormal{Crit}_f$, an element of the singularity set of $f$ . Then, there is a symplectic form on $U$ induced by $\pi$ on the symplectic leaf $\Sigma_q$ through $q$  near each of the singularities of the fibration with the following expressions:

\noindent {\bf Indefinite Fold}
\begin{equation}\label{FoldForm}
\omega_{\Sigma_q}=\frac{x_1^2}{2 k(q)(x_1^2+x_3^2)^{1/2}}\omega_{Area}(q)
\end{equation}

\noindent where $\omega_{Area}$ is the area form on $\Sigma_q$ induced by the euclidean metric on $B^6$.

\noindent {\bf Definite Folds}
\\
For the definite definite fold singularities described by the equations (\ref{eqn:fold-def1}) and (\ref{eqn:fold-def2}) we obtain the symplectic forms
\begin{equation}\label{FoldFormDefinite1}
\omega_{\Sigma_q}=-\frac{x_1^2}{2 (x_1^2+x_3^2})^{1/2}\omega_{Area}(q)
\end{equation}
and
\begin{equation}\label{FoldFormDefinite2}
\omega_{\Sigma_q}=\frac{x_1^2}{2 (x_1^2+x_3^2})^{1/2}\omega_{Area}(q)
\end{equation}
respectively.

\noindent {\bf Indefinite Cusp}
\begin{equation}\label{CuspForm}
\omega_{\Sigma_q}=\frac{3 x_2 \left(t_1-x_1^2\right)}{k(q)(9 \left(t_1-x_1^2\right)^2+4 x_3^2)^{1/2}}\omega_{Area}(q)
\end{equation}
\noindent where $\omega_{Area}$ is the area form on $\Sigma_q$ induced by the euclidean metric on $B^6$.
\\
\noindent {\bf Definite Cusps}
\\
The definite singularities modelled by the parametrizations (\ref{eqn:cusp-def1}) and (\ref{eqn:cusp-def2}) have the corresponding symplectic form which coincides in both cases :
\begin{equation}\label{CuspFormDefinite}
\omega_{\Sigma_q}=\frac{3 (t_1 - x_1^2) x2}{(9 (t_1 - x_1^2)^2 + 4 x_3^2)^{1/2}}\omega_{Area}(q)
\end{equation}

\noindent {\bf Indefinite Swallowtail}
\begin{equation}\label{SwallowtailForm}
\omega_{\Sigma_q}=-\frac{t_2 + 2 t_1 x_1 + 4 x_1^3}{k(q)((t_2 + 2 t_1 x_1 + 4 x_1^3)^2 + 4 x_3^2)^{1/2}}\omega_{Area}(q)
\end{equation}

\noindent here $\omega_{Area}$ is the area form on $\Sigma_q$ induced by
the euclidean metric on $B^6$.

\noindent {\bf Definite Swallowtail}

The definite swallowtails modelled by the parametrizations  (\ref{eqn:swallowtail-def1}) and (\ref{eqn:swallowtail-def1}) have the corresponding symplectic form which coincides in both cases:
\begin{equation}\label{SwallowtailFormDefinite}
\omega_{\Sigma_q}=-\frac{(t_2 + 2 t_1 x_1 + 4 x_1^3}{((t_2 + 2 t_1 x_1 + 4 x_1^3)^2 + 4 x_3^2)^{1/2}}\omega_{Area}(q)
\end{equation}

\noindent {\bf Indefinite Butterfly}
\begin{equation}\label{ButterflyForm}
\omega_{\Sigma_q}=-\frac{t_3 + x_1 (2 t_2 + 3 t_1 x_1 + 5 x_1^3)}{k(q) ((t_3 + x_1 (2 t_2 + 3 t_1 x_1 + 5 x_1^3))^2 + 4 x_3^2)^{1/2}}\omega_{Area}(q)
\end{equation}

\noindent where $\omega_{Area}$ is the area form on $\Sigma_q$ induced by
the euclidean metric on $B^6$.

\noindent {\bf Definite Butterfly}

\begin{equation}\label{ButterflyFormDefinite}
\omega_{\Sigma_q}=-\frac{(t_3 + x_1 (2 t_2 + 3 t_1 x_1 + 5 x_1^3)}{(t_3 + 
    x_1 (2 t_2 + 3 t_1 x_1 + 5 x_1^3))^2 + 4 x_3^2)^{1/2}}\omega_{Area}(q)
\end{equation}

\end{corollary}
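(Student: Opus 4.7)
The plan is to apply the preceding general-criterion theorem case by case to each of the eight local models listed in Definition \ref{genWLF-dim-6} together with their definite counterparts from Section \ref{SS:Poisson-Sym-Wrinkled}. In every case the map has the form $(t_1,t_2,t_3,x_1,x_2,x_3)\mapsto(t_1,t_2,t_3,f_o)$, so the first three Casimirs $C_1=t_1,\,C_2=t_2,\,C_3=t_3$ are coordinate projections and the entire computation reduces to data attached to the single nontrivial Casimir $C_4=f_o$. Thus for each singularity one only has to plug the corresponding polynomial $f_o$ and its partial derivatives into the recipe provided by the general criterion.

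First, at a generic point $q$ with $\partial_{x_1} f_o(q)\neq 0$, I would read off the two generators of $T_q\Sigma_q$ from \eqref{eqn:uqvq-expression-real}:
\[
u=\Bigl(0,0,0,-\tfrac{\partial_{x_2} f_o}{\partial_{x_1} f_o},1,0\Bigr),\qquad v=\Bigl(0,0,0,-\tfrac{\partial_{x_3} f_o}{\partial_{x_1} f_o},0,1\Bigr),
\]
set $u_q=u$, and replace $v$ by its Gram--Schmidt orthogonalisation $v_q=v-\langle u,v\rangle|u|^{-2}u$, producing an orthogonal pair spanning the leaf at $q$. Second, I would solve the linear system \eqref{eqn:symplectic-alpha-real} for $\alpha_q$, using the Poisson coefficients computed in \S\ref{SS:Poisson-Sym-Wrinkled}, namely \eqref{biv:fold}--\eqref{biv:butterfly-def2}. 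In the real case the only nonzero entries of $\pi^{ij}$ lie in the $3\times 3$ block indexed by $(x_1,x_2,x_3)$, so the system decouples into three linear equations with a one-parameter family of solutions; I then pick the canonical representative with vanishing first coordinates, as permitted by the general criterion.

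Third, for each singularity I would evaluate
\[
\omega_{\Sigma_q}(u_q,v_q)=\langle\alpha_q,v_q\rangle,\qquad \omega_{Area}(u_q,v_q)=|u_q|\,|v_q|,
\]
and take the quotient to read off the scalar prefactor multiplying $\omega_{Area}$. After the dust settles the answer takes the general shape
\[
\omega_{\Sigma_q}=\frac{\mathcal N(\partial f_o)}{k(q)\sqrt{(\partial_{x_1} f_o)^2+(\partial_{x_3} f_o)^2}}\,\omega_{Area}(q),
\]
where $\mathcal N(\partial f_o)$ is built from $\partial_{x_1}f_o$ and $\partial_{x_2}f_o$, and $\partial_{x_1}f_o$ equals $-2x_1,\,3(x_1^2-t_1),\,4x_1^3+2t_1x_1+t_2$ or $5x_1^4+3t_1x_1^2+2t_2x_1+t_3$ for the fold, cusp, swallowtail and butterfly respectively. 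Feeding these four derivatives into the formula produces \eqref{FoldForm}, \eqref{CuspForm}, \eqref{SwallowtailForm}, \eqref{ButterflyForm} for the indefinite cases; the definite cases \eqref{FoldFormDefinite1}--\eqref{ButterflyFormDefinite} are obtained identically, since flipping the sign of $x_2^2$ or $x_3^2$ in $f_o$ only changes the signs of $\partial_{x_2}f_o$ and $\partial_{x_3}f_o$, and this propagates to the final formula as at most an overall sign and (depending on $k$) a removal of the factor $k(q)$.

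The main obstacle is the algebraic bookkeeping: one must verify that the cross terms appearing in $|u_q|^2|v_q|^2$ after Gram--Schmidt collapse correctly so that the denominator is $\sqrt{(\partial_{x_1}f_o)^2+(\partial_{x_3}f_o)^2}$ rather than a messier combination involving $x_2$, and that the choice of $\alpha_q$ with vanishing first entries is compatible with the inner product evaluation against $v_q$. Once this is established for the fold, the cusp, swallowtail and butterfly follow by a direct substitution of the respective $\partial_{x_1}f_o$, and the six definite variants follow from the three indefinite ones by sign-tracking alone, so that no genuinely new computation is required beyond a single generic case.
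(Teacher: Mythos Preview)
Your proposal is correct and follows essentially the same route as the paper. The paper's proof (in Appendix~A) carries out precisely the recipe you describe: for each singularity it exhibits an orthogonal pair $u_q,v_q$ of tangent vectors to the leaf, solves $\mathcal{B}_q(\alpha_q)=u_q$ using the bivector matrices of \S\ref{SS:Poisson-Sym-Wrinkled}, and then evaluates $\langle\alpha_q,v_q\rangle$ to read off the scalar in front of $\omega_{Area}$. The only cosmetic difference is that the paper treats each of the four indefinite singularities by a separate explicit computation with slightly different choices of basis vectors (e.g.\ for the fold it takes $u_q\propto x_3\partial_{x_1}+x_1\partial_{x_3}$ rather than your $u$ from \eqref{eqn:uqvq-expression-real}), whereas you propose a single unified template into which one substitutes $\partial_{x_1}f_o$; but since both bases span the same leaf tangent space, the resulting scalar coefficient is the same, and the definite variants are handled in both treatments by sign-tracking exactly as you indicate.
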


A proof may be found in appendix A. 

%
%
\subsection{Symplectic forms on higher dimensional type $2n$ generalized wrinkled fibrations}

\begin{corollary}\label{Cor:LocalSymplectic2}
For a non-vanishing smooth function $k\in C^{\infty}(M)$ we have the following consequences:
\\
\noindent {\em (1)} Let $M$ be a closed smooth oriented and connected $2n$-manifold, and $f : M \to X$ a generalized broken Lefschetz fibration. The symplectic forms induced by the corresponding Poisson structures on the symplectic leaves $\Sigma_q$ through  a point $q=(t_1, \dots, t_{2n-3}, x_1, x_2, x_3)$ have the following local expressions: 
\medskip 

\noindent {\bf  Lefschetz-type singularity}
\\
Let $q\in B^{2n}\backslash \{0\}$. Near Lefschetz-type singularities the symplectic form is given by 
\begin{equation*}
\omega_{\Sigma_q}= \frac{1}{k(q)(t_{2n-3}^2+x_1^2+x_2^2+x_3^2)}\omega_{Area}(p)
\end{equation*}

\noindent {\bf  Indefinite fold singularity}
\\
Near indefinite fold singularities $Z$ the symplectic form is locally described by
\begin{equation*}
\omega_{\Sigma_q}=\frac{1}{k(q)\sqrt{x_1^2+x_2^2+x_3^2}}\omega_{Area}(q)
\end{equation*}
where $\omega_{Area}(q)$ is the area form on $\Sigma_q$ induced by the metric

\begin{equation*}
ds^2=dt_1^2+\dots +dt_{2n-3}^2+dx_1^2+dx_2^2+dx_3^2
\end{equation*}
on $Z\times B^{3}$.

\medskip

\noindent {\em (2)} Let $M$ be a closed, orientable, smooth $2n$-manifold endowed with a type $2n$-wrinkled fibration $f$ to a closed $2n\!-\!2$ manifold $X$. Let $q\in B^{2n}\backslash \{0\}$. Then the symplectic forms associated to the complete Poisson structure are given by the following expressions near the corresponding singularities:
\medskip

\noindent {\bf Fold}
\begin{equation*}
\omega_{\Sigma_q}=\frac{x_1^2}{2 k(q)(x_1^2+x_3^2)^{1/2}}\omega_{Area}(q)
\end{equation*} 

\noindent {\bf Cusp}
\begin{equation*}
\omega_{\Sigma_q}=\frac{3 x_2 \left(t_{2n-5}-x_1^2\right)}{k(q)(9 \left(t_1-x_1^2\right)^2+4 x_3^2)^{1/2}}\omega_{Area}(q)
\end{equation*}

\noindent {\bf Swallowtail}
\begin{equation*}
\omega_{\Sigma_q}=-\frac{t_{2n-4} + 2 t_{2n-5} x_1 + 4 x_1^3}{k(q)((t_{2n-4} + 2 t_{2n-5} x_1 + 4 x_1^3)^2 + 4 x_3^2)^{1/2}}\omega_{Area}(q)
\end{equation*}

\noindent {\bf Butterfly}
\begin{equation*}
\omega_{\Sigma_q}=-\frac{t_{2n-3} + x_1 (2 t_{2n-4} + 3 t_{2n-5} x_1 + 5 x_1^3)}{k(q) ((t_{2n-3} + x_1 (2 t_{2n-4} + 3 t_{2n-5} x_1 + 5 x_1^3))^2 + 4 x_3^2)^{1/2}}\omega_{Area}(q)
\end{equation*}

\medskip
 The following equations depend on a real parameter $s$. Near a singularity locally modeled by the maps $b_s$  \eqref{eqn:bs}, $m_s$  \eqref{eqn:ms}, $f_s  \eqref{eqn:fs},$ and $w_s$ \eqref{eqn:ws}  the corresponding symplectic forms are
\\
\noindent {\bf Map}  $b_s$ 
\begin{equation*}
\omega_{\Sigma_q}=\frac{(s - t_{2n-3}^2 + x_1^2)}{k(q)((s - t_{2n-3}^2 +
x_1^2)^2(9 (s - t_{2n-3}^2 + x_1^2)^2 + 4 (x_2^2 +
x_3^2)))^{1/2}}\omega_{Area}(q)
 \end{equation*}

\noindent {\bf Map}  $m_s$ 
\begin{equation*}
\omega_{\Sigma_q}=-\frac{(s - t_{2n-3}^2 - x_1^2)}{k(q)((s - t_{2n-3}^2 -
x_1^2)^2(9 (s - t_{2n-3}^2 - x_1^2)^2 + 4 (x_2^2 +
x_3^2)))^{1/2}}\omega_{Area}(q)
\end{equation*}

\noindent {\bf Map}  $f_s$ 
\begin{equation*}
\omega_{\Sigma_q}=\frac{(t_{2n-3} - 2 s x_1 + 4 x_1^3)}{k(q)((t_{2n-3} - 2
s x_1 + 4 x_1^3)^2 ((t_{2n-3} - 2 s x_1 + 4 x_1^3)^2 + 4 (x_2^2 +
x_3^2)))^{1/2}}\omega_{Area}(q)
\end{equation*}

\noindent {\bf Map}  $w_s$ 
\begin{eqnarray*}
\omega_{\Sigma_q}&=&\frac{1}{2\mu k(q)}\cdot \\
&&\frac{(t_{2n-3} x_2 +
   x_1 x_3)((s t_{2n-3} + 2 (t_{2n-3}^2 + x_1^2))^2 + (x_3(s + 2 t_{2n-3})-2 x_1 x_2)^2 + 4 (t_{2n-3} x_2 + x_1 x_3))}{((s t_{2n-3} + 2 (t_{2n-3}^2 + x_1^2))^2 + (x_3(s + 2 t_{2n-3})-2 x_1 x_2)^2 + 4 (t_{2n-3} x_2 + x_1 x_3)^2)^{1/2}}\omega_{Area}(q)
\end{eqnarray*}

\noindent here $\omega_{Area}$ is the area form on $\Sigma_q$ induced by
the euclidean metric on $B^{2n}$, and
\begin{equation*}
\begin{split}
\mu^2=&(t_{2n-3} x_2 + x_1 x_3)^2 (s^2 (t_{2n-3}^2 + x_2^2 + x_3^2) + 4 s t_{2n-3} (t_{2n-3}^2 + x_1^2 +x_2^2
+x_3^2) + 4 (t_{2n-3}^2 + x_1^2 + x_2^2 + x_3^2)^2)\\
&(s^2 (t_{2n-3}^2 + x_3^2) + 4 (t_{2n-3}^2 + x_1^2) (t_{2n-3}^2 + x_1^2 + x_2^2 + x_3^2) +
   4 s (t_{2n-3}^3 - x_1 x_2 x_3 + t_{2n-3} (x_1^2 + x_3^2))).
\end{split}
\end{equation*}

\noindent For all these cases $\omega_{Area}(q)$ is the area form induced by the euclidean metric on $B^{2n}$.
\end{corollary}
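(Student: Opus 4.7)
The plan is to exploit the reduction established in the preceding theorem: the induced symplectic form on a leaf depends only on the Poisson structure coming from the reduced map $f_o$ (the part depending on the last four coordinates $t_{2n-3}, x_1, x_2, x_3$ in the real case, or $z_{n-1}, z_n$ in the complex case). Since the first $2n-4$ entries of the tangent vectors $u_q, v_q$ and the dual vector $\alpha_q$ are forced to vanish, the entire computation collapses to a low-dimensional linear algebra problem driven by the local bivectors already tabulated in Corollary \ref{Cor:localPoisson}.

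First, for each singularity model listed (indefinite/definite folds, cusps, swallowtails, butterflies, the complex Lefschetz-type model, and the deformation families $b_s, m_s, f_s, w_s$), I would read off $f_o$ and form the transposed Jacobian $D^T$ whose kernel describes the tangent plane to the symplectic leaf at $q$. Using formula \eqref{eqn:uqvq-expression-real} I would write down the two generators $u_q, v_q$ of $\ker D^T$; after orthogonalising $v_q$ against $u_q$, these provide an explicit orthogonal frame of $T_q\Sigma_q$. The area form then evaluates on the pair $(u_q,v_q)$ by $\omega_{\mathrm{Area}}(u_q,v_q) = \|u_q\|\,\|v_q\|$ (up to sign), giving a universal normalisation against which the Poisson-induced form will be compared.

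Next, for each case I would solve the $3\times 4$ linear system \eqref{eqn:symplectic-alpha-real} (respectively \eqref{eqn:symplectic-alpha} for the complex Lefschetz model) for $\alpha_q$, plugging in the components $\pi^{ij}$ taken directly from the corresponding bivector in Corollary \ref{Cor:localPoisson}. Because the nonzero $\pi^{ij}$ only involve partials of $f_o$, the system is essentially triangular after the orthogonalisation, so $\alpha_q$ is obtained by one explicit substitution. Formula \eqref{E:Symp-form-gen} then yields
\begin{equation*}
\omega_{\Sigma_q}(u_q,v_q) = \langle \alpha_q, v_q\rangle,
\end{equation*}
and dividing by $\omega_{\mathrm{Area}}(u_q,v_q)$ gives the scalar coefficient recorded in each displayed formula. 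This step is a clean specialisation of the computations done in dimension $6$ in Corollary \ref{Cor:LocalSymplectic1}, where only the indices of the auxiliary coordinates $t_i$ shift by the constant $2n-8$ between the two settings.

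The main obstacle will be the algebraic bookkeeping rather than any conceptual point: the $w_s$-model is an honest complex map $\mathbb{R}^{2n}\to\mathbb{R}^{2n-2}$ with a nontrivial $2\times 4$ block of partial derivatives, so both $u_q, v_q$ and $\alpha_q$ acquire four nonzero components, producing the messy normalising factor $\mu^2$ in the statement. I would therefore handle $w_s$ last, treating the polynomial and trigonometric-like cancellations that give the symmetric form of $\mu^2$ as a symbolic computation (it is convenient to factor out $(t_{2n-3}x_2+x_1x_3)$ before simplifying). For the butterfly case the polynomial $5x_1^4 + 3t_1x_1^2 + 2t_2 x_1 + t_3$ should be abbreviated throughout the calculation so that the final expression mirrors the lower-degree cusp and swallowtail results. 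With these precautions the verification reduces, in each case, to a short determinant computation identical in structure to the dimension-$6$ proof placed in Appendix A.
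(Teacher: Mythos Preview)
Your proposal is correct and follows essentially the same approach as the paper. The paper's proof, sketched in Appendix~A, likewise reduces each case via the general criterion (the preceding theorem) to the last four coordinates, finds an orthogonal pair $u_q,v_q$ in the kernel of the transposed Jacobian, solves $\mathcal{B}_q(\alpha_q)=u_q$ using the bivector components from Corollary~\ref{Cor:localPoisson}, and then reads off $\omega_{\Sigma_q}(u_q,v_q)=\langle\alpha_q,v_q\rangle$ as a multiple of the area form; for the $2n$-dimensional statement this is just the $6$-dimensional computation with the $t$-indices shifted, exactly as you indicate.
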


We include a proof in appendix A.

\section{Near-symplectic Forms on Generalized Wrinkled Fibrations}\label{sec:near-sympletic}
\subsection{Near-symplectic Manifolds}
We follow the definition of near-symplectic forms in higher dimensions as in \cite{V16}. Let $M$ be an oriented manifold of dimension $2n$, and consider a 2-form $\omega \in \Omega^2(M)$ such that it is near-positive everywhere, that is  $\omega^n \geq  0$. Denote by $K_p = \lbrace v\in T_pM \, \mid \, \omega_p(v, \cdot) = 0 \rbrace$  the kernel of $\omega$ at a point $p\in M$. The collection of fibrewise kernels form the kernel of the 2-form  $K: = \ker (\omega) \subset TM$. If $\omega$ is symplectic, then $K_p = 0$. We relax the non-degeneracy condition by near-positive forms, we can consider non-trivial kernels $K_p$. There is an intrinsic gradient $\nabla_p \omega\colon K_p \rightarrow \Lambda^2 T_{p}^{*} M$. Restricting the gradient to bivectors in $K_p$ results in a linear map 
\begin{equation*}
D_K:= \nabla_p \omega|_{K} \colon K_p \rightarrow \Lambda^2 K^*.
\end{equation*}
The image $\textnormal{Im} (D_K)$ has dimension at most 3. Assuming that $K$ is 4-dimensional and Rank$(D_K)=3$ it has been shown in  \cite{V16} that the zero set of $\omega^{n-1}$ is a submanifold of $M$ of dimension $2n-3$. 

\begin{definition}\label{near-symplectic}
A 2-form $\omega \in \Omega^2 (M^{2n})$ is {\em near-symplectic}, if it is closed, $\omega^n \geq 0$, and at a point $p$ where $\omega^n = 0$, one has that the kernel $K$ is 4-dimensional and that the image Im$(D_K)$ has dimension 3.  
\\
The set $Z_{\omega} = \{ p\in M \mid \omega_{p}^{n-1} = 0 \}$ is called the {\em singular locus} of $\omega$ and it is a submanifold of codimension 3.
\end{definition}
\begin{remark}
In dimension 6, the definition of a near-symplectic form implies that $\omega \in \Omega^2(M)$ is closed and for every $p\in M$, either
\begin{itemize}
 \item[(i)] $\omega_p^3 > 0$ on $M\setminus Z_{\omega}$, or
 \item[(ii)] $\omega_p^{2} = 0$ on a 3-submanifold $Z_{\omega}$. 
\end{itemize}
\end{remark}

Locally, a Darboux-type theorem for near-symplectic forms tells us that we can find a coordinate neighbourhood $U$ around a point $p\in Z_{\omega} \subset (M, \omega)$ such that $\omega$ looks like the sum of a symplectic form of rank $2n-4$ and a 4-dimensional near-symplectic form. On $(U, (z,x))$ with coordinates $z = (z_0, \dots, z_{2n-3})$ on $Z_{\omega}$ and normal coordinates $x = (x_1, x_2, x_3)$, we can express $\omega$ locally as 
\begin{eqnarray*}
\omega &=& \omega_Z - 2 x_1 (dz_0 \wedge dx_1 + dx_2 \wedge dx_3 ) +   x_2 (dz_0 \wedge dx_2 - dx_1 \wedge dx_3 ) + x_3 (dz_0 \wedge dx_3 + dx_1 \wedge dx_2) 
\\
&=& \omega_Z  - 2x_1 (\beta_1) + x_2 (\beta_2) + x_3 (\beta_3)
\end{eqnarray*}
where $\omega_Z:= i^{*}\omega$ is a closed 2-form of maximal rank on $Z_{\omega}$.  On a 6-manifold, $\omega_Z$ would be of rank 2. The 2-forms $\beta_1, \beta_2$ and $\beta_3$ correspond to elements of a basis of the rank-3 bundle $\Lambda_{+}^2 \mathbb{R}^4$.

\subsection{Proof of Theorem \ref{thm:near-symplectic}} \label{sec:proof-near-symplectic}
\begin{theorem}
Let $M$ be a closed oriented 6-manifold, $(X, \omega_X)$ a closed symplectic 4-manifold, and $f\colon M \rightarrow X$ a generalized wrinkled fibration. Denote by $Z$ the singularity set of $f$, a 3-submanifold of $M$. Assume that there is a class $\alpha \in H^2(M)$, such that it pairs positively with every component of every fibre, and $\alpha|_{Z} = [\omega_X|_{Z}]$.  Then there exist a near-symplectic form $\omega$ on $M$ with singular locus $Z$ such that it restricts to a symplectic form on the smooth fibres of the fibration.
\end{theorem}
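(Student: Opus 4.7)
The plan is to adapt the four-dimensional construction of Auroux--Donaldson--Katzarkov and Lekili, combined with the higher-dimensional near-symplectic framework of \cite{V16}, and to supplement it with the new local models at swallowtail and butterfly singularities that are specific to generalized wrinkled fibrations in dimension $6$. The overall scheme has three ingredients: a global closed $2$--form that is symplectic off $Z$ and restricts symplectically to smooth fibres; local near-symplectic Darboux-type models around each type of singularity in $Z$; and a patching argument that uses the cohomological hypothesis $\alpha|_{Z} = [\omega_X|_{Z}]$.

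First, I would start from $f^{*}\omega_X$, which is closed on $M$ but degenerates along the vertical directions of $f$. Because $\alpha \in H^{2}(M)$ pairs positively with every component of every fibre, a Thurston-type argument (as in \cite{ADK05, L09, V16}) produces a closed $2$--form $\eta$ representing $\alpha$ whose restriction to each smooth fibre is symplectic. Then $\omega_{\mathrm{glob}} = f^{*}\omega_X + \varepsilon\,\eta$ is closed and satisfies $\omega_{\mathrm{glob}}^{3} > 0$ on $M \setminus Z$ for $\varepsilon > 0$ sufficiently small, and its restriction to any smooth fibre is a positive multiple of the symplectic form induced by $\eta$.

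Next, I would construct local near-symplectic models in a tubular neighbourhood of each stratum of $Z$, parametrised by the normal form of the corresponding singularity from Definition \ref{genWLF-dim-6}. For the fold stratum, the model from \cite{V16} applies directly: in coordinates $(t_1,t_2,t_3,x_1,x_2,x_3)$ one writes
\begin{equation*}
\omega_{\mathrm{loc}} = \omega_Z - 2 x_1 (dt_1 \wedge dx_1 + dx_2 \wedge dx_3) + x_2 (dt_1 \wedge dx_2 - dx_1 \wedge dx_3) + x_3 (dt_1 \wedge dx_3 + dx_1 \wedge dx_2),
\end{equation*}
where $\omega_Z$ is a symplectic form on the $3$--dimensional local piece of $Z$ obtained from $\omega_X|_{Z}$. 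For cusps, swallowtails and butterflies, the singular locus is still a smooth $3$--dimensional submanifold described by the standard Whitney normal forms, so one can take the analogous ansatz adapted to the corresponding normal coordinates, choosing the vertical contribution so that its zero set coincides exactly with $Z$ and so that the induced map $D_K : K_p \to \Lambda^{2} K^{*}_p$ has rank $3$. In each case the induced symplectic form on the smooth fibres can be checked against the ones in Corollary \ref{Cor:LocalSymplectic1}, so compatibility with the Poisson picture is automatic.

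Finally, the patching uses the hypothesis $\alpha|_{Z} = [\omega_X|_{Z}]$: on a tubular neighbourhood $\mathcal{N}$ of $Z$ the global form $\omega_{\mathrm{glob}}$ and the local model $\omega_{\mathrm{loc}}$ have cohomologous restrictions to $Z$, hence differ by an exact form, and a partition-of-unity interpolation along the normal direction yields a closed $2$--form agreeing with $\omega_{\mathrm{loc}}$ near $Z$ and with $\omega_{\mathrm{glob}}$ outside a slightly larger neighbourhood. For $\varepsilon$ small enough the interpolated form is still near-positive, is symplectic on each smooth fibre, and has singular locus exactly $Z$. The main obstacle I expect is verifying the rank condition $\mathrm{rank}\, D_K = 3$ of Definition \ref{near-symplectic} at swallowtail and butterfly points: these are higher-order singularities where the naive fold ansatz may degenerate further, so the local $2$--form must be designed carefully against the Whitney normal form, and a secondary subtlety is ensuring compatibility along the strata closure relations (swallowtails in the closure of cusps, butterflies in the closure of swallowtails) so that the local models glue consistently before the global interpolation is carried out.
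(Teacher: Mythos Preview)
Your overall architecture---local near-symplectic models near $Z$, a Thurston-type global $2$--form away from $Z$, and a cohomological patching using $\alpha|_Z=[\omega_X|_Z]$---matches the paper's scheme, which explicitly reduces to constructing the local forms (Step~1) and then invokes \cite{ADK05,L09,V16} verbatim for the remaining steps. So the global part of your proposal is fine.

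The gap is in the local models, and more precisely in which obstacle you flag. You say the difficulty is checking $\mathrm{rank}\,D_K=3$ at swallowtail and butterfly points; in fact the paper gets this essentially for free. Its local candidate is the explicit $2$--form
\[
\omega_0 \;=\; f^{*}\omega_X \;+\; \ast\bigl[f^{*}\omega_X^{2}\bigr]
\;=\; du\wedge ds + dt\wedge df_4 + \ast(du\wedge ds\wedge dt\wedge df_4),
\]
and one checks directly from this that $\omega_0^3\ge 0$, that $K_p=\langle\partial_t,\partial_x,\partial_y,\partial_z\rangle$ is $4$--dimensional on $Z$, and that $D_K$ has rank $3$ at every singular point (fold, cusp, swallowtail, butterfly alike). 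The actual obstacle is that $\omega_0$ is \emph{not closed} except at folds: $d\omega_0\neq 0$ at cusps, swallowtails, and butterflies. The paper's work consists of writing down, for each singularity type, an explicit correction $\eta$ so that $\omega_0+\eta$ is closed, and then introducing an anisotropic rescaling $R_\varepsilon$ on $\Lambda^2_+K^*$ together with a small parameter $\varepsilon$ so that $\omega=R_\varepsilon(\omega_0)+\varepsilon\,\eta$ remains positive on the fibres. Your ``analogous ansatz adapted to the normal coordinates'' does not address closedness at all, and invoking Corollary~\ref{Cor:LocalSymplectic1} does not help here: those are symplectic forms on individual leaves, not closed $2$--forms on the total space.

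Two smaller points. First, $\omega_Z$ cannot be a symplectic form on the $3$--dimensional manifold $Z$; it is a rank-$2$ closed $2$--form (the paper says ``maximal rank''). Second, your concern about gluing local models across strata closures (butterflies $\subset$ swallowtail closure, etc.) is legitimate but is not treated separately in the paper: the explicit coordinate formulae for $\omega$ at each singularity type are designed so that away from the deepest stratum they reduce to the adjacent model, so compatibility is built into the choice of $\eta$ and $R_\varepsilon$ rather than handled by a separate argument.
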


\begin{proof}
The global construction of a near-symplectic form on a generalized wrinkled fibration is similar to the 4-dimensional case. Constructing a near-symplectic form on the total space of a broken Lefschetz fibration involves four steps \cite{ADK05} that extend to the case of a wrinkled fibration \cite{L09}. These steps appear again in the higher dimensional situation with generalized BLFs \cite{V16}. We briefly recall them. Step 1 constructs a local near-symplectic that is positive on the fibres. Steps 2 and 3 extend the 2-form to the neighbourhood of the fibres and then to the whole manifold using the cohomological assumptions of the theorem. Finally, step 4 involves Thurston's argument to guarantee positivity on vertical and tangent subspaces. All these steps apply in the same way for generalized wrinkled fibrations. The only modification involves the local model of the 2-form around the new singularities.  Once this is done, there is no difference anymore in the global construction. Since this adjustment applies to step 1, we give the local near-symplectic forms for each singularity. 

\subsection{Constructing near-symplectic forms. General Scheme}
We begin by giving the general scheme to construct local near-symplectic forms around the critical set of $f$ without coordinates. The specific formul{\ae} in coordinates will be provided afterwards. To start, consider the following 2-form
\begin{equation}\label{omega0-general}
\omega_0 =  f^{*} \omega_X + \ast \left[ f^{*} \omega_X^2 \right] = du \wedge ds + dt \wedge df_4 + \ast( du \wedge ds \wedge dt \wedge df_4)
\end{equation}

where $\ast\colon \Omega^4M \rightarrow \Omega^2 M$ denotes the Hodge operator with respect to a Riemannian metric $g$ on $M$, and $df_4$ is the 1-form defined by the fourth component of the generalized wrinkled fibration which varies according to the parametrization of each singularity.  This 2-form is positive on the fibres and non-degenerate outside the singularity set by construction.  
The positivity on the fibres follows from $\ast \left[ f^{*} \omega_X^2 \right]$, since this 2-form is positive on the vertical subspaces, complementary to horizontal subspaces where the pullback $f^{*} \omega_X $ is positive. The non-degeneracy can be checked by looking at 
$$\omega_0^3 = (f^{*} \omega_X)^3 + 3(f^{*} \omega_X)^2 \wedge  \ast \left[f^{*} (\omega_X)\right]+ 3(f^{*} \omega_X) \wedge  \ast \left[f^{*} (\omega_X)^2 \right] + \ast \left[f^{*} (\omega_X \right]^3. $$
Since $(f^{*} \omega_X)^3 = 0$ and $\left(\ast \left[f^{*} \omega_X\right]\right)^2 = 0$, this 6-form reduces to $\omega_0^3 = 3 \beta \wedge \ast \beta $ with $\beta = f^{*} \omega_X^2$, which is positive outside the singularity set. To transition to the description of the 2-forms in coordinates, first we notice that all our near-symplectic forms can be expressed in the following way:
\begin{equation*}
\omega_0=\omega_1+f\omega_2+g\omega_3+h\omega_4
\end{equation*}
where $\omega_i \in \Omega^2(M)$, for $i=1, 2, 3, 4$, and $f,g,h \in C^{\infty}(M)$  are determined by each singularity. In all cases we have $\omega_1=du\wedge ds$, and up to an odd permutation and a minus sign, 
\begin{equation*}
\omega_2=dt\wedge dx+dy\wedge dz, \quad\omega_3=dt\wedge dy+dz\wedge dx,\quad \omega_4=dt\wedge dz+dx\wedge dy.
\end{equation*}
 Note that $\omega_1^2=0$, $\omega_2\wedge\omega_3=0, \, \omega_2\wedge\omega_4=0$, and $\omega_3\wedge\omega_4=0 $. Thus, we have
\begin{equation*}
\omega_0^2=f^2\omega_2^2+g^2\omega_3^2+h^2\omega_4^2
\end{equation*}
and
\begin{equation*}
\omega_0^3=f^2\omega_1\wedge\omega_2^2+g^2\omega_1\wedge\omega_3^2+h^2\omega_1\wedge\omega_4^2+f^3\omega_2^3+g^3\omega_3^3+h^3\omega_4^3.
\end{equation*}
This implies
\begin{eqnarray*}
\omega_0^3&=&f^2\omega_1\wedge\omega_2^2+g^2\omega_1\wedge\omega_3^2+h^2\omega_1\wedge\omega_4^2\\
&=&(f^2\omega_1+g^2\omega_1+h^2\omega_1)\wedge(2dt\wedge dx\wedge dy\wedge dz)\\
&=&(f^2+g^2+h^2)du\wedge ds\wedge dt\wedge dx\wedge dy\wedge dz.
\end{eqnarray*}
This form is clearly positive outside the singularity set. On each singularity we have that $\omega_0^3 = \omega_0^2 = 0$, since $df_4 =0$ at $\textnormal{Crit}_f$.  At each critical point $p\in M$ we find a 4-dimensional kernel  $K_p = \lbrace v\in T_pM \, \mid \, \omega_p(v, \cdot) = 0 \rbrace$ spanned by $\langle \partial_t, \partial_x, \partial_y, \partial_z \rangle$, and the rank of $D_K \colon K_p \rightarrow \Lambda^2 K^*$ is three.  With these properties $\omega_0$ is near-positive, i.e. $\omega_0^3 \geq 0$, and it satisfies the transversality condition. The only condition we lack now for this form to be near-symplectic is for it to be closed. 
\\
\\
The 2-form \eqref{omega0-general} is closed only around the fold singularities but not for the other three. Thus, we need to add a suitable 2-form $\eta$ so that $\omega_A = \omega_0 + \eta$ is closed. Fix $g$ on $K$, such that $\omega|_{K}$ is self-dual. We can then define a rescaling map $R_{\varepsilon}\colon \Omega^2K^{*} \rightarrow \Omega^2K^{*}$ and apply it to $\omega_0$. Finally, we add a small $\varepsilon$ to preserve the non-degeneracy. 
\begin{equation}\label{eq:general-ns-form}
\omega =  R_{\varepsilon}(\omega_0) + \varepsilon\cdot \eta.
\end{equation}
With a suitable choice of $\eta$, it can be checked that the near-positive properties of $\omega_0$ are preserved and it is closed. Thus, the 2-form \eqref{eq:general-ns-form} provides the desired near-symplectic form. 
\subsection{Folds}

Since this singularity is also present in generalized bLfs, the proof for this case follows exactly as in the proof of Theorem 1 in \cite{V16}. We will only recall a couple of useful facts that will be applied to the other singularities. The 4-dimensional kernel of the near-symplectic form is $K = \nu \oplus NZ$, where $\nu$ is the line bundle defined by $\nu = \ker(f^{*}\omega_X)$ and $NZ$ is the normal bundle of the singular locus $Z$. Using the same coordinates parametrizing the folds we can express the 4-dimensional tangent subspace as $K = \text{span}\langle \partial_t, \partial_x, \partial_y , \partial_z \rangle$. Let $f_4(u,s,t,x,y,z) = \frac{1}{2}(x^2 + y^2) - z^2$. The local model described in step 1 of \cite{V16} can be expressed as:  
\begin{align}
\omega =&  f^{*} \omega_X +  \ast \left[ (f^{*} (\omega_X^2) \right]
\nonumber 
\\
=&  du\wedge ds + x dt\wedge dx + y dt \wedge dy - 2 z dt \wedge dz 
\nonumber
\\
&+\ast ( x \, du\wedge ds \wedge dt \wedge dx + y\,  du\wedge ds \wedge dt \wedge dy - 2 z \, du \wedge ds \wedge dt \wedge dz)
\nonumber
\\
   =& du \wedge ds + x(dt \wedge dx + dy\wedge dz) + y (dt\wedge dy + dz\wedge dx) - 2z (dt \wedge dz + dx \wedge dy) 
\end{align}
This 2-form is already near-symplectic so it does not require any rescaling nor additional terms and $\eta = 0$.  
\subsection{Cusps}

A generalized wrinkled fibration has real and oriented coordinate charts around cusps with parametrization given by 
$$ f\colon (u,s,t, x,y,z) \mapsto (u,s,t,  x^3  - 3 t \cdot x + y^2 -  z^2). $$
Following Lekili's scheme \cite{L09}, we start with the 2-form
\begin{align}
\omega_0 =&  f^{*} \omega_X +  \ast \left[ (f^{*} (\omega_X^2) \right]
\nonumber 
\\
             = & du\wedge ds + 3(x^2-t)(dt\wedge dx + dy\wedge dz) + 2y(dt\wedge dy -dx\wedge dz) \\
             &  -2z (dt\wedge dz + dx\wedge dy). 
\nonumber
\end{align}

This 2-form $\omega_0$ is near-positive,  the kernel $K_p = \ker(\omega(p)) \subset T_p M$ is 4-dimensional spanned by $\langle \partial_t, \partial_x, \partial_y , \partial_z \rangle$, and the rank of $D_K$ at the singular points is 3.  This form is not closed though, as $d\omega_0 = 6x dx\wedge dy\wedge dz -3dt\wedge dy \wedge dz$. We modify it, and add the 2-form  $\eta =  - 6xy (dz\wedge dy) -3y (dt\wedge dx)$. To preserve the positivity on the fibres we introduce a scaling map. Locally we have splitting $T_pM = K_p \oplus \textnormal{Symp}_{Z_{p}}$. Equipping $M$ with a Riemannian metric $g$, we can restrict $g_K:= g|_{K}$ such that $\omega|_{K}$ is self-dual, and consider the Hodge-$\ast$ operator $\ast_{g_{K}}\colon \Omega^2 K^{*} \rightarrow \Omega^2 K^{*} $. Thus we can define a scaling map $R_{\varepsilon}\colon \Omega^{2}_{+}K^{*} \to \Omega^{2}_{+} K^{*}$  on basis elements of the space of self-dual forms on $K$:
\begin{align*}
 R_{\varepsilon}(dt\wedge dx + dy \wedge dz ) &= \varepsilon (dt \wedge dx + dy\wedge dx)  \\ 
 R_{\varepsilon}(dt \wedge dy + dz\wedge dx) &= dt \wedge dy + dz\wedge dx \\
 R_{\varepsilon}(dt\wedge dz + dx \wedge dy)  &= dt\wedge dz + dx \wedge dy  
\end{align*}
Applying $R_{\varepsilon}$, we now find the near symplectic form $\omega$ adapted to a neighborhood of the cusp singularity, as intended
$$\omega =    R_{\varepsilon}(\omega_0) + \varepsilon\cdot \eta =  du \wedge ds + R_{\varepsilon} (dt \wedge df_4 + \ast_{g_K}(dt \wedge df_4)) +  \varepsilon\cdot \eta. $$
Expanding the previous expression in coordinates we obtain 
\begin{eqnarray}\label{ns-form-cusp}
\omega  &=& du\wedge ds + 3\varepsilon(x^2-t)(dt\wedge dx + dy\wedge dz) + 2y dt\wedge dy + (2y - 6\varepsilon xy ) dz \wedge dx 
\nonumber
\\
& &- (2z + 3\varepsilon y) dt\wedge dz - 2z dx\wedge dy.
\end{eqnarray}
A basis for the tangent space of the fibre is given by the vectors: 
\begin{equation*}
v_1 =  \left(\frac{2z}{3(x^2-t)} \right) \frac{\partial}{\partial x} +  \frac{\partial}{\partial z},
\end{equation*}
\begin{equation*} v_2 =  \left(\frac{2y}{3(x^2-t)}\right) \frac{\partial}{\partial x} -  \frac{\partial}{\partial y}.
\end{equation*} By evaluating the 2-form $\omega$  on tangent vectors to the fibres we can see that for a sufficiently small $\varepsilon$ the form \eqref{ns-form-cusp} is positive on the fibres of $f$ (see appendix B).

\subsection{Swallowtails}
The coordinate charts around a swallowtail are given by 
$$f\colon (u,s,t, x,y,z) \mapsto ( u,s,t,  x^4 +  sx^2 + tx+ y ^2  -  z^2).$$
Define our initial form $\omega_0$ to be 
\begin{align*}
\omega_0 =&  f^{*} \omega_X + \ast \left[ (f^{*} (\omega_X^2) \right]
\\
             = & du\wedge ds + (4x^3 + 2sx + t) (dt \wedge dx + dx\wedge dy)+ 2y(dt\wedge dy -dx\wedge dz) \\
             &  -2z (dt\wedge dz + dx\wedge dy).              
\end{align*}

This form is non-degenerate outside the critical set and evaluates positively on the fibres of $f$.  At a critical point $p\in M$ we have a splitting $T_pM = K_p \oplus \textnormal{Symp}_Z$, where $K = \textnormal{span}\langle \partial_t, \partial_x, \partial_y, \partial_z \rangle$ and $\textnormal{Symp}_Z \subset TZ$ is the symplectic subspace given by $du\wedge ds$. 

However, this 2-form is not closed. Thus, we add the following extra terms to $\omega_0$,
\begin{eqnarray*}
\eta &=& -2z dt\wedge dy + (12x^2 -2s)y dz\wedge dx -y dt\wedge dz -(12 x^2 -2s) 2z dx\wedge dy 
\\
& &- x^2 dt \wedge ds - 2yz \, ds\wedge dx + 2xz \, ds \wedge dy
\end{eqnarray*}
and obtain  $\omega = \omega_0 + \eta$.  This 2-form is now closed. To preserve the non-degeneracy, we multiply $\omega_0$ by the function $R_{\varepsilon}$ and the 2-form $\eta$ by $\varepsilon$. 
\begin{eqnarray}\label{swalllowtail:form}
& & 
\\
\omega  &=& R_{\varepsilon} (\omega_0) +  \varepsilon \cdot \eta 
\nonumber
\\
&=& du\wedge ds +  \varepsilon(4x^3 + 2sx + t) (dt \wedge dx + dx\wedge dy)+ 2y(dt\wedge dy -dx\wedge dz) 
-2z (dt\wedge dz + dx\wedge dy)   
\nonumber 
\\
& &+ \varepsilon[-2z dt\wedge dy + (12x^2 -2s)y dz\wedge dx -y dt\wedge dz -(12 x^2 -2s) 2z dx\wedge dy 
\nonumber
\\
& &- x^2 dt \wedge ds - 2yz \, ds\wedge dx + 2xz \, ds \wedge dy]
\nonumber
\end{eqnarray}
This 2-form is closed, non-degenerate outside the singularity, and at the singular points it has a 4-dimensional kernel and $\textnormal{Rank}(D_K) = 3$. Thus, this is a near-symplectic form defined on a small neighbourhood around a swallowtail point.  

Using the basis of vectors tangent to the fibre given by
\begin{equation*}
 v_1 =  \left(\frac{2z}{4x^3 + 2sx +t} \right) \frac{\partial}{\partial x} +  \frac{\partial}{\partial z},
 \end{equation*}
 \begin{equation*}
  v_2 =  \left(\frac{2y}{4x^3 + 2sx +t)}\right) \frac{\partial}{\partial x} -  \frac{\partial}{\partial y},
  \end{equation*}  we can see that for a sufficiently small $\varepsilon$ the previous 2-form $\omega$ is positive on the fibres of $f$ (see appendix B).

\subsection{Butterflies}
The local model of generalized wrinkled fibration around a butterfly point is 
$$f\colon (u, s, t, x, y, z) \mapsto ( u,s,t,   x^5 + u x^3 + s x^2 + tx + y ^2  -  z^2).$$
Following the same scheme as for the other singularities, we begin with the 2-form 
\begin{align*}
\omega_0 =&  f^{*} \omega_X +   \ast \left[ (f^{*} (\omega_X^2) \right]
\\
             = & du\wedge ds + (5x^4 - 3ux^2 + 2sx - t) (dt \wedge dx + dx\wedge dy)+ 2y(dt\wedge dy -dx\wedge dz) \\
             &  -2z (dt\wedge dz +dx\wedge dy) - x^3 dt \wedge du + x^2 dt \wedge ds.
\end{align*}

This form has a 4-dimensional kernel $K$ at the singular points and the rank of $D_K$ is 3. It is also non-degenerate outside the critical set and evaluates positively on the fibres of $f$, but it is not closed.  By adding 
\begin{eqnarray*}
\eta &=&  (-10 x^3 +3 ux -s ) (4z dx \wedge dy + 2y dx \wedge dz) + 3x^2 ( 2 du \wedge dz - y du\wedge dz ) 
\\
& & + y \, dt \wedge dz + 2z  \, dt \wedge dy - 2xy \, ds \wedge dz - 4xz \, ds \wedge dy
\end{eqnarray*}
then the 2-form $ \omega_0 + \eta$ is closed.  To preserve the non-degeneracy, we add a scaling factor and obtain the local near-symplectic form around the butterfly point $\omega = R_{\varepsilon} (\omega_0) +  \varepsilon \cdot \eta$. In coordinates this is %
\begin{align}\label{butterfly:form}
\omega =& du\wedge ds + \varepsilon \cdot (5x^4 - 3ux^2 + 2sx - t) (dt \wedge dx + dx\wedge dy)+ 2y(dt\wedge dy -dx\wedge dz) 
\nonumber
\\
             &  -2z (dt\wedge dz + dx\wedge dy) - \varepsilon \cdot x^3 dt \wedge du + \varepsilon \cdot x^2 dt \wedge ds
\nonumber
\\
&+\varepsilon[(-10 x^3 +3 ux -s ) (4z dx \wedge dy + 2y dx \wedge dz) + 3x^2 ( 2 du \wedge dz - y du\wedge dz ) 
\nonumber
\\
&+ y \, dt \wedge dz + 2z  \, dt \wedge dy - 2xy \, ds \wedge dz - 4xz \, ds \wedge dy].
\nonumber
\end{align}

\noindent A basis of tangent vectors to the fibres is given by 
\begin{equation*}
v_1 = \left( 0, 0, 0, \frac{2z}{5x^4 -3ux^2 +2sx -t}, 0, 1\right)\quad {\rm and}
 \end{equation*}
 \begin{equation*}
 v_2 = \left( 0, 0 ,0 , \frac{2y}{5x^4 -3ux^2 +2sx -t} , -1, 0 \right).
 \end{equation*}  For a sufficiently small $\varepsilon$, the 2-form $\omega$ is positive on the fibres of $f$ (see appendix B). 

\end{proof}

\appendix
\section{Computations of local expressions }
Here we present the details concerning the calculations obtained in Corollaries \ref{Cor:localPoisson}, \ref{Cor:LocalSymplectic1}, and \ref{Cor:LocalSymplectic2}. This might allow for an easier verification of the results.

\subsection{Local expressions for the Poisson structures}\label{Appendix:Poisson}

\noindent {\bf Local expressions near a fold singularity}

The local coordinate model around a fold singularity is given by the map:
\begin{equation*}
(t_1, t_2, t_3, x_1, x_2, x_3) \mapsto (t_1, t_2, t_3, -x_1^2 + x_2^2 + x_3^2)
\end{equation*}

Considering each coordinate function as a Casimir function for the Poisson bivector that we want to find, we compute the differential matrix of the map
\begin{equation*}\left(
\begin{array}{cccc}
 1 & 0 & 0 & 0 \\
 0 & 1 & 0 & 0 \\
 0 & 0 & 1 & 0 \\
 0 & 0 & 0 & -2 x_1 \\
 0 & 0 & 0 & 2 x_2 \\
 0 & 0 & 0 & 2 x_3 \\
\end{array}
\right).
\end{equation*}

As we described, this gives a bivector matrix, which in this case is:

\begin{equation*}
\left(
\begin{array}{cccccc}
 0 & 0 & 0 & 0 & 0 & 0 \\
 0 & 0 & 0 & 0 & 0 & 0 \\
 0 & 0 & 0 & 0 & 0 & 0 \\
 0 & 0 & 0 & 0 & 2k x_3& -2 k x_2 \\
 0 & 0 & 0 & -2 k x_3 & 0 & -2 k x_1 \\
 0 & 0 & 0 & 2 k x_2 & 2 k x_1 & 0 \\
\end{array}
\right)
\end{equation*}

Therefore the Poisson structure in the local coordinates of a fold singularity is described by equation \ref{biv:fold}.
\medskip

We also compute the Poisson bivector for definite singularities for each wrinkled fibration. In this case, they are locally modeled by (\ref{eqn:fold-def1}) and (\ref{eqn:fold-def2}):
\begin{equation*}(t_1, t_2, t_3, x_1, x_2, x_3)\mapsto (t_1, t_2, t_3, x_1^2+x_2^2+x_3^2)
\end{equation*}

Following the same computations as above, the Poisson matrix is then:
\begin{equation*}
\left(
\begin{array}{cccccc}
 0 & 0 & 0 & 0 & 0 & 0 \\
 0 & 0 & 0 & 0 & 0 & 0 \\
 0 & 0 & 0 & 0 & 0 & 0 \\
 0 & 0 & 0 & 0 & 2 x_3 & -2 x_2 \\
 0 & 0 & 0 & -2 x_3 & 0 & 2 x_1\\
 0 & 0 & 0 & 2 x_2 & -2 x_1 & 0 \\
\end{array}
\right)
\end{equation*}

It follows that the Poisson bivector is given by \ref{biv:fold-def1}.
\medskip

For the case when the map is
\begin{equation*}(t_1, t_2, t_3, x_1, x_2, x_3)\mapsto (t_1, t_2, t_3, x_1^2+x_2^2+x_3^2).
\end{equation*}

the Poisson matrix is then:
\begin{equation*}
\left(
\begin{array}{cccccc}
 0 & 0 & 0 & 0 & 0 & 0 \\
 0 & 0 & 0 & 0 & 0 & 0 \\
 0 & 0 & 0 & 0 & 0 & 0 \\
 0 & 0 & 0 & 0 & -2 x_3 & 2 x_2 \\
 0 & 0 & 0 & 2 x_3 & 0 & -2 x_1\\
 0 & 0 & 0 & -2 x_2 & 2 x_1 & 0 \\
\end{array}
\right)
\end{equation*}

Hence the Poisson bivector is given by \ref{biv:fold-def2}.
\newline

\noindent{\bf Local expressions near a cusp singularity.}

The local coordinate model around a cusp singularity is given by:
\begin{equation*}
(t_1, t_2, t_3, x_1, x_2, x_3) \mapsto (t_1, t_2, t_3, x_1^3 - 3 t_1x_1 + x_2^2 - x_3^2)
\end{equation*}

The differential matrix of the map is
\begin{equation*}
\left(
\begin{array}{cccc}
 1 & 0 & 0 & -3 x_1 \\
 0 & 1 & 0 & 0 \\
 0 & 0 & 1 & 0 \\
 0 & 0 & 0 & 3 x_1^2-3 t_1 \\
 0 & 0 & 0 & 2 x_2 \\
 0 & 0 & 0 & -2 x_3 \\
\end{array}
\right)
\end{equation*}

The corresponding bivector matrix is
\begin{equation*}
\left(
\begin{array}{cccccc}
 0 & 0 & 0 & 0 & 0 & 0 \\
 0 & 0 & 0 & 0 & 0 & 0 \\
 0 & 0 & 0 & 0 & 0 & 0 \\
 0 & 0 & 0 & 0 & -2 k x_3 & -2k x_2 \\
 0 & 0 & 0 & 2 k x_3 & 0 & 3 k(x_1^2- t_1) \\
 0 & 0 & 0 & 2 k x_2 & 3 k (t_1-x_1^2) & 0 \\
\end{array}
\right).
\end{equation*}

Thus, the Poisson bivector in the local coordinates of a cusp singularity is given by \ref{biv:cusp}.
\newline

For definite singularities in cusps, we obtain in each case (\ref{eqn:cusp-def1}) and (\ref{eqn:cusp-def2}):
\begin{equation*}(t_1, t_2, t_3, x_1, x_2, x_3)\mapsto (t_1, t_2, t_3, x_1^3-3t_1x_1 +x_2^2+ x_3^2)
\end{equation*}

The Poisson matrix is:
\begin{equation*}
\left(
\begin{array}{cccccc}
 0 & 0 & 0 & 0 & 0 & 0 \\
 0 & 0 & 0 & 0 & 0 & 0 \\
 0 & 0 & 0 & 0 & 0 & 0 \\
 0 & 0 & 0 & 0 & 2 x_3 & -2 x_2 \\
 0 & 0 & 0 & -2 x_3 & 0 & 3 x_1^2-3 t_1 \\
 0 & 0 & 0 & 2 x_2 & 3 t_1-3 x_1^2 & 0 \\
\end{array}
\right).
\end{equation*}

Then the corresponding bivector is \ref{biv:cusp-def1}.
\medskip

For the case:
\begin{equation*}(t_1, t_2, t_3, x_1, x_2, x_3)\mapsto (t_1, t_2, t_3, x_1^3-3t_1x_1 -x_2^2- x_3^2)
\end{equation*}

The Poisson matrix is:
\begin{equation*}
\left(
\begin{array}{cccccc}
 0 & 0 & 0 & 0 & 0 & 0 \\
 0 & 0 & 0 & 0 & 0 & 0 \\
 0 & 0 & 0 & 0 & 0 & 0 \\
 0 & 0 & 0 & 0 & -2 x_3 & 2 x_2 \\
 0 & 0 & 0 & -2 x_3 & 0 & 3 x_1^2-3 t_1 \\
 0 & 0 & 0 & -2 x_2 & 3 t_1-3 x_1^2 & 0 \\
\end{array}
\right)
\end{equation*}

Therefore the Poisson bivector is given by the equation \ref{biv:cusp-def2}.
\newline

\noindent{\bf Local expressions near a swallowtail singularity}

The local coordinate model around a swallowtail singularity is given by the map:
\begin{equation*}
(t_1, t_2, t_3, x_1, x_2, x_3) \mapsto (t_1, t_2, t_3, x_1^4 + t_1 x_1^2 + t_2 x_1 + x_2^2 - x_3^2)
\end{equation*}

Its differential matrix is:
\begin{equation*}
\left(
\begin{array}{cccc}
 1 & 0 & 0 & x_1^2 \\
 0 & 1 & 0 & x_1 \\
 0 & 0 & 1 & 0 \\
 0 & 0 & 0 & 4 x_1^3+2 t_1 x_1+t_2\\
 0 & 0 & 0 & 2 x_2 \\
 0 & 0 & 0 & -2 x_3 \\
\end{array}
\right)
\end{equation*}

The corresponding matrix is:
\begin{equation*}
\left(
\begin{array}{cccccc}
 0 & 0 & 0 & 0 & 0 & 0 \\
 0 & 0 & 0 & 0 & 0 & 0 \\
 0 & 0 & 0 & 0 & 0 & 0 \\
 0 & 0 & 0 & 0 & -2k x_3 & -2 k x_2 \\
 0 & 0 & 0 & 2 k x_3 & 0 & k(4 x_1^3+2 t_1 x_1+t_2) \\
 0 & 0 & 0 & 2 k x_2 & k(-4 x_1^3-2 t_1 x_1-t_2) & 0 \\
\end{array}
\right)
\end{equation*}

It produces the Poisson bivector in the local coordinates of a swallowtail singularity described by equation \ref{biv:swallowtail}.
\newline

For the corresponding definite singularities:
\begin{equation*}(t_1, t_2, t_3, x_1, x_2, x_3)\mapsto (t_1, t_2, t_3, x_1^4+t_1x_1^2+t_2x_1+x_2^2+x_3^2)
\end{equation*}

The Poisson matrix is:
\begin{equation*}
\left(
\begin{array}{cccccc}
 0 & 0 & 0 & 0 & 0 & 0 \\
 0 & 0 & 0 & 0 & 0 & 0 \\
 0 & 0 & 0 & 0 & 0 & 0 \\
 0 & 0 & 0 & 0 & 2 x_3 & -2 x_2 \\
 0 & 0 & 0 & -2 x_3 & 0 & 4 x_1^3+2 t_1 x_1+t_2 \\
 0 & 0 & 0 & 2 x_2 & -4 x_1^3-2 t_1x_1-t_2 & 0 \\
\end{array}
\right)
\end{equation*}

The Poisson bivector is \ref{biv:swallowtail-def1}.
\medskip

In the case when the local form of the definite singularity is:
\begin{equation*}(t_1, t_2, t_3, x_1, x_2, x_3)\mapsto (t_1, t_2, t_3, x_1^4+t_1x_1^2+t_2x_1-x_2^2-x_3^2)
\end{equation*}

The Poisson matrix is:
\begin{equation*}
\left(
\begin{array}{cccccc}
 0 & 0 & 0 & 0 & 0 & 0 \\
 0 & 0 & 0 & 0 & 0 & 0 \\
 0 & 0 & 0 & 0 & 0 & 0 \\
 0 & 0 & 0 & 0 & -2 x_3 & 2 x_2 \\
 0 & 0 & 0 & 2 x_3 & 0 & 4 x_1^3+2 t_1 x_1+t_2 \\
 0 & 0 & 0 & -2 x_2 & -4 x_1^3-2 t_1x_1-t_2 & 0 \\
\end{array}
\right)
\end{equation*}

The corresponding bivector is \ref{biv:swallowtail-def2}.
\newline

\noindent{\bf Local expressions near a butterfly singularity}

The local coordinate model around a buttterfly singularity is given by:
\begin{equation*}
(t_1, t_2, t_3, x_1, x_2, x_3) \mapsto (t_1, t_2, t_3, x_1^5 + t_1 x_1^3 + t_2 x_1^2 + t_3 x_1 + x_2^2 - x_3^2)
\end{equation*}

The differential of the map is:
\begin{equation*}
\left(
\begin{array}{cccc}
 1 & 0 & 0 & x_1^3 \\
 0 & 1 & 0 & x_2^2 \\
 0 & 0 & 1 & x_1 \\
 0 & 0 & 0 & 5 x_1^4+3 t_1 x_1^2+2 t_2x_1+t_3 \\
 0 & 0 & 0 & 2 x_2 \\
 0 & 0 & 0 & -2 x_3 \\
\end{array}
\right)
\end{equation*}

The corresponding matrix is:
\begin{equation*}
\left(
\begin{array}{cccccc}
 0 & 0 & 0 & 0 & 0 & 0 \\
 0 & 0 & 0 & 0 & 0 & 0 \\
 0 & 0 & 0 & 0 & 0 & 0 \\
 0 & 0 & 0 & 0 & -2 k x_3 & -2 k x_2 \\
 0 & 0 & 0 & 2 k x_3 & 0 & k(5 x_1^4+3 t_1 x_1^2+2 t_2 x_1+t_3) \\
 0 & 0 & 0 & 2 k x_2 & k(-5 x_1^4-3 t_1 x_1^2-2 t_2x_1-t_3) & 0 \\
\end{array}
\right)
\end{equation*}

Then the Poisson bivector in the local coordinates of a butterfly singularity is described by \ref{biv:butterfly}.
\newline

For definite singularities:
\begin{equation*}
(t_1, t_2, t_3, x_1, x_2, x_3) \mapsto (t_1, t_2, t_3, x_1^5 + t_1 x_1^3 + t_2 x_1^2 + t_3 x_1 + x_2^2 + x_3^2)\end{equation*}

The Poisson matrix is:
 \begin{equation*}
\left(
\begin{array}{cccccc}
 0 & 0 & 0 & 0 & 0 & 0 \\
 0 & 0 & 0 & 0 & 0 & 0 \\
 0 & 0 & 0 & 0 & 0 & 0 \\
 0 & 0 & 0 & 0 & 2 x_3 & -2 x_2 \\
 0 & 0 & 0 & -2 x_3& 0 & 5 x_1^4+3 t_1 x_1^2+2 t_2 x_1+t_3 \\
 0 & 0 & 0 & 2 x_2 & -5 x_1^4-3 t_1 x_1^2-2 t_2 x_1-t_3 & 0 \\
\end{array}
\right)
\end{equation*}

Then the corresponding bivector is \ref{biv:butterfly-def1}.
\medskip

When the local form of the definite singularity is: 
\begin{equation*}(t_1, t_2, t_3, x_1, x_2, x_3) \mapsto (t_1, t_2, t_3, x_1^5 + t_1 x_1^3 + t_2 x_1^2 + t_3 x_1 - x_2^2 - x_3^2)
\end{equation*}

The Poisson matrix is:
\begin{equation*}
\left(
\begin{array}{cccccc}
 0 & 0 & 0 & 0 & 0 & 0 \\
 0 & 0 & 0 & 0 & 0 & 0 \\
 0 & 0 & 0 & 0 & 0 & 0 \\
 0 & 0 & 0 & 0 & -2 x_3 & 2 x_2 \\
 0 & 0 & 0 & 2 x_3& 0 & 5 x_1^4+3 t_1 x_1^2+2 t_2 x_1+t_3 \\
 0 & 0 & 0 & -2 x_2 & -5 x_1^4-3 t_1 x_1^2-2 t_2 x_1-t_3 & 0 \\
\end{array}
\right)
\end{equation*}
 
Then the bivector is \ref{biv:butterfly-def2}.
\newline
 
\subsection{Equations for the symplectic forms on the leaves near singularities}

\noindent{\bf Indefinite Fold}

As we described in the general proccedure, if $u_q, v_q$ are tangent vectors to the leaves there exist
co-vectors $\alpha_q, \beta_q\in T^*_qM$ such that $\mathcal{B}_q
(\alpha_q)=u_q$ and $\mathcal{B}_q (\beta_q)=v_q$, where the map
$\mathcal{B}_q$ is given by:
\begin{equation*}
\mathcal{B}_q(\alpha)(\cdot)=\pi_q(\cdot, \alpha)
\end{equation*}

Therefore, if we want to find two tangent vectors to the symplectic leaves we have to give vectors annihilated simultaneously by the differential of
four Casimir functions for the corresponding Poisson structure.

A straightforward calculation yields that the vectors,
\begin{equation*}
u_q=\frac{x_3\frac{\partial}{\partial x_1}+x_1\frac{\partial}{\partial x_3}}{(x_1^2 + x_3^2)^{1/2}}
\end{equation*}
\begin{equation*}
v_q=\frac{x_1^2x_2\frac{\partial}{\partial x_1}+x_1\frac{\partial}{\partial x_2}-x_1x_2x_3\frac{\partial}{\partial x_3}}{(x_1^2+x_3^2)^{1/2}}
\end{equation*}
are tangent to $\Sigma_q$ at $q$, and orthogonal with respect to
the euclidean metric $$ds^2=dt_1^2+dt_2^2+dt_3^2+dx_1^2+dx_2^2+dx_3^2$$ on $B^6$. Using the local expression of the Poisson structure for
a fold singularity given by equation (\ref{biv:fold}), one can check that $\mathcal{B}_q
(\alpha_q)=u_q$, for
\begin{equation*}
\alpha_q=\frac{x_3dx_1+x_1dx_2}{k(q)(x_1^2+x_3^2)^{1/2}}.
\end{equation*}

Similarly, $\mathcal{B}_q(\beta_q)=v_q$, for
\begin{equation*}
\beta_q= \frac{-x_1x_2x_3dx_2-x_1(x_1^2+x_3^2)}{2k(q) (x_1^2+x_3^2) }.
\end{equation*}

A direct calculation now implies that the symplectic form is given by \ref{FoldForm}:
\begin{equation*}
\omega_{\Sigma_q}(q)(u_q, v_q)=\langle \alpha_q,  v_q
\rangle=\frac{x_1^2}{2 k(q)(x_1^2+x_3^2)^{1/2}}
\end{equation*}
\newline

For definite singularities described by the equations (\ref{eqn:fold-def1}) and (\ref{eqn:fold-def2}) we obtain the symplectic forms
\begin{equation*}
\omega_{\Sigma_q}=-\frac{x_1^2}{2 (x_1^2+x_3^2)^{1/2}}\omega_{Area}(q)
\end{equation*}
and
\begin{equation*}
\omega_{\Sigma_q}=\frac{x_1^2}{2 (x_1^2+x_3^2)^{1/2}}\omega_{Area}(q)
\end{equation*}

respectively. This follows directly with the same computations of the previous case. Tangent vectors to the leaves $u_q$ and $v_q$ are slightly different, one component changes its sign. This creates a change of sign on one of the components of the corresponding vectors $\alpha_q$ and $\beta_q$.	
\newline

\noindent{\bf Indefinite Cusps}

In this case we find that the vectors,
\begin{equation*}
u_q=\frac{-2 x_3\frac{\partial}{\partial x_1}+3 \left(t_1-x_1^2\right)\frac{\partial}{\partial x_3}}{(9 \left(t_1-x_1^2\right)^2+4 x_3^2)^{1/2}}
\end{equation*}
\begin{equation*}
v_q=\frac{(2x_2-8x_2x_3^2)\frac{\partial}{\partial x_1}+3(t_1-x_1^2) (9(t_1 - x_1^2)^2 + 4 x_3^2)\frac{\partial}{\partial x_2}+12 (t_1 - x_1^2) x_2 x_3\frac{\partial}{\partial x_3}}{9 (t_1 - x_1^2)^2 + 4 x_3^2}
\end{equation*}
are tangent to $\Sigma_q$ at $q$, and orthogonal with respect to
the euclidean metric
$$ds^2=dt_1^2+dt_2^2+dt_3^2+dx_1^2+dx_2^2+dx_3^2$$

on $B^6$. Using the corresponding local expression of the bivector (\ref{biv:cusp}), we check that $\mathcal{B}_q(\alpha_q)=u_q$, for
\begin{equation*}
\alpha_q=\frac{3 (t_1 - x_1^2)dx_1+x_3dx_3}{2k(q)(9 t_1^2 - 18 t_1 x_1^2 + 9 x_1^4 + 4 x_3^2)^{1/2}}.
\end{equation*}

Similarly, $\mathcal{B}_q(\beta_q)=v_q$, for
\begin{equation*}
\beta_q= \frac{6 (t_1 - x_1^2) x_2 x_3dx_1-9 (t_1 - x_1^2)^2 x_2dx_3}{k(q) (9 (t_1 - x_1^2)^2 + 4 x_3^2) }.
\end{equation*}

Now a direct calculation gives that the symplectic form is \ref{CuspForm}.
\newline

For the definite singularities modelled by the equations (\ref{eqn:cusp-def1}) and (\ref{eqn:cusp-def2}), the corresponding symplectic forms on the leaves coincide with the previous one:
\begin{equation*}
\omega_{\Sigma_q}=\frac{3 (t_1 - x_1^2) x2}{(9 (t_1 - x_1^2)^2 + 4 x_3^2)^{1/2}}\omega_{Area}(q)
\end{equation*}

This last equality follows from very similar computations as in the previous case, up to a sign, as in the fold case.
\newline

\noindent{\bf Indefinite Swallowtail}

We find that the vectors,
\begin{equation*}
u_q=\frac{2x_3\frac{\partial}{\partial x_1}+(t_2 + 2 t_1 x_1 + 4 x_1^3)\frac{\partial}{\partial x_3}}{((t_2 + 2 t_1 x_1 + 4 x_1^3)^2 + 4 x_3^2)^{1/2}}
\end{equation*}
\begin{equation*}
v_q=\frac{(-2x_2(t_2 + 2 t_1 x_1 + 4 x_1^3)^2 + 4 x_3^2+8x_3^2)\frac{\partial}{\partial x_1}+(t_2 + 2 t_1 x_1 + 4 x_1^3)\frac{\partial}{\partial x_2}}{(t_2 + 2 t_1 x_1 + 4 x_1^3)^2 + 4 x_3^2}
\end{equation*}
\begin{equation*}
\quad \quad +\frac{4 (t_2 + 2 t_1 x_1 + 4 x_1^3) x_2 x_3\frac{\partial}{\partial x_3}}{(t_2 + 2 t_1 x_1 + 4 x_1^3)^2 + 4 x_3^2}
\end{equation*}
are tangent to $\Sigma_q$ at $q$, and orthogonal with respect to
the euclidean metric
$$ds^2=dt_1^2+dt_2^2+dt_3^2+dx_1^2+dx_2^2+dx_3^2$$
on $B^6$. Using the local expression of the Poisson structure for
a fold singularity given by equation (\ref{biv:fold}), one can check that $\mathcal{B}_q
(\alpha_q)=u_q$, for
\begin{equation*}
\alpha_q=\frac{(t_2 +2 t_1 x_1+4 x_1^3)dx_1-2x_3 dx_3}{2x_2k(q)((t_2 + 2 t_1 x_1 + 4 x_1^3)^2 + 4 x_3^2)^{1/2}}.
\end{equation*}

Similarly, $\mathcal{B}_q(\beta_q)=v_q$, for
\begin{equation*}
\beta_q= \frac{1}{k}\left(\frac{2 x_3(t_2 + 2 t_1 x_1 + 4 x_1^3) dx_1}{t_2^2 + 4 t_1 t_2 x_1 + 4 t_1^2 x_1^2 + 8 t_2 x_1^3 + 16 t_1 x_1^4 + 16 x_1^6 + 
 4 x_3^2}\right.
\end{equation*}
\begin{equation*}
\quad \quad +\left. \left(1-\frac{4x_3^2}{(t_2 + 2 t_1 x_1 + 4 x_1^3)^2 + 4 x_3^2}\right)dx_3\right)
\end{equation*}
A direct calculation now implies that the symplectic form is \ref{SwallowtailForm}.
\newline

For the definite singularities we obtain that the symplectic forms on the leaves coincide in both cases (\ref{eqn:swallowtail-def1}) and (\ref{eqn:swallowtail-def2}):
\begin{equation*}
\omega_{\Sigma_q}=-\frac{(t_2 + 2 t_1 x_1 + 4 x_1^3}{((t_2 + 2 t_1 x_1 + 4 x_1^3)^2 + 4 x_3^2)^{1/2}}\omega_{Area}(q)
\end{equation*}

Analogously to the last cases, we proceed changing the corresponding signs.
\newline

\noindent{\bf Butterfly singularity}

We find that the vectors,
\begin{equation*}
u_q=\frac{2x_3\frac{\partial}{\partial x_1}+(t_3 + x_1 (2 t_2 + 3 t_1 x_1 + 5 x_1^3))\frac{\partial}{\partial x_3}}{((t_3 + x_1 (2 t_2 + 3 t_1 x_1 + 5 x_1^3))^2 + 4 x_3^2)^{1/2}}
\end{equation*}
\begin{equation*}
v_q=\left(\frac{8x_2x_3^2}{(t_3 + x_1 (2 t_2 + 3 t_1 x_1 + 5 x_1^3))^2 + 4 x_3^2}-2x_2\right)\frac{\partial}{\partial x_1}
\end{equation*}
\begin{equation*}
\quad \quad +\left(t_3 + x_1 (2 t_2 + 3 t_1 x_1 + 5 x_1^3)\right)\frac{\partial}{\partial x_2}
\end{equation*}
\begin{equation*}
\quad \quad +\frac{4 (t_3 + x_1 (2 t_2 + 3 t_1 x_1 + 5 x_1^3)) x_2 x_3}{(t_3 + x_1 (2 t_2 + 3 t_1 \alpha_1 + 5 x_1^3))^2 + 4 x_3^2}\frac{\partial}{\partial x_3}
\end{equation*}
are tangent to $\Sigma_q$ at $q$, and orthogonal with respect to
the euclidean metric
$$ds^2=dt_1^2+dt_2^2+dt_3^2+dx_1^2+dx_2^2+dx_3^2$$
on $B^6$. Using the local expression of the Poisson structure for
a fold singularity given by equation (\ref{biv:fold}), one can check that $\mathcal{B}_q
(\alpha_q)=u_q$, for
\begin{equation*}
\alpha_q=\frac{(t_3 +2 t_2 x_1 +3 t_1 x_1^2 +5 x_1^4)dx_1-2x_3dx_3}{2x_2k(q)((t_3 + x_1 (2 t_2 + 3 t_1 x_1 + 5 x_1^3))^2 + 4 x_3^2)^{1/2}}
\end{equation*}

Similarly, $\mathcal{B}_q(\beta_q)=v_q$, for
\begin{equation*}
\beta_q= \frac{2x_3 (t_3 + x_1 (2 t_2 + 3 t_1 x_1 + 5 x_1^3)) }{(t_3 + x_1 (2 t_2 + 3 t_1 x_1 + 5 x_1^3))^2 + 4 x_3^2}dx_1
\end{equation*}
\begin{equation*}
\quad \quad +\left(1-\frac{4x_3^2}{(t_3 + x_1 (2 t_2 + 3 t_1 x_1 + 5 x_1^3))^2 + 4 x_3^2}\right)dx_3.
\end{equation*}

A direct calculation now implies that the symplectic form is given by \ref{ButterflyForm}:
\newline

We have that for the corresponding butterfly definite singularities the symplectic form is in both cases:
\begin{equation*}
\omega_{\Sigma_q}=-\frac{(t_3 + x_1 (2 t_2 + 3 t_1 x_1 + 5 x_1^3)}{(t_3 + 
    x_1 (2 t_2 + 3 t_1 x_1 + 5 x_1^3))^2 + 4 x_3^2)^{1/2}}\omega_{Area}(q)
\end{equation*}
\medskip

\section{Positivity on fibres of local near-symplectic forms}   

\noindent {\bf Cusp:}
\\
\noindent Near-symplectic form:
\begin{align}
\omega  &= du\wedge ds + 3\varepsilon(x^2-t)(dt\wedge dx + dy\wedge dz) + 2y dt\wedge dy + (2y - 6\varepsilon xy ) dz \wedge dx 
\nonumber
\\
&- (2z + 3\varepsilon y) dt\wedge dz - 2z dx\wedge dy.
\nonumber
\end{align}
\noindent Map:
$$ f\colon (u,s,t, x,y,z) \mapsto (u,s,t,  x^3  - 3 t \cdot x + y^2 -  z^2) $$
\noindent Differential:
$$
Df = \left(\begin{array}{cccccc}
	 1		         & 0               &   0             &       0        & 0            &       0                                                         \\
	      0             &  1		    &   0             &       0        & 0        &          0                                                      \\
	      0             &  0              & 1		       &       0        &  0           &         0                                                        \\
	      0             &   0             &    x            &      3(x^2 - t)       & 2y            &  -2z                                                               \\
\end{array}\right) 
$$
Basis of vectors tangent to the fibre:
\begin{align}
v_1 &= \left( 0, 0, 0, \frac{2z}{3(x^2 - t) }, 0, 1\right)
\nonumber
\\
v_2 &= \left( 0, 0 ,0 , \frac{2y}{3(x^2 - t) } , -1, 0 \right)
\nonumber
\end{align}

\noindent Relevant terms of the 2-form when evaluated on $v_1$ and $v_2$ :
$$\tilde{\omega} = 3\varepsilon(x^2-t)dy\wedge dz + (2y - 6\varepsilon xy ) dz \wedge dx - 2z dx\wedge dy.$$

\noindent Evaluating $\omega(v_1, v_2)$, which amounts to evaluate $\tilde{\omega}(v_1, v_2)$ we obtain:
\begin{align}
\omega(v_1, v_2)  =& \tilde{\omega}(v_1, v_2) = \frac{1}{3(x^2 -t)} \left( 3\varepsilon (x^2 -t)^2 + 4y^2 - 12 \varepsilon x y^2 + 4z^2 \right)
\nonumber
\\
=& \frac{1}{3(x^2 -t)} \left( 3\varepsilon (x^2 - t)^2 + 4y^2(1 - \varepsilon 3 x) + 4z^2 \right)
\end{align}

All terms are always positive except possibly $12\varepsilon xy^2$.  However, taking a sufficiently small neighbourhood with $|x|<1$, this term is at most $12\varepsilon y^2$.  By choosing $\varepsilon$ to be sufficiently small we can arrange that this term will be smaller that $4y^2$, hence the whole expression remains positive. 

\noindent {\bf Swallowtails:}
\\
\noindent Near-symplectic form:
\begin{align}
 \omega = & du\wedge ds + \varepsilon(4x^3 + 2sx +t) (dt \wedge dx + dy \wedge dz) + 2y (dt \wedge dy - dx \wedge dz) - 2z (dt \wedge dz + dx \wedge dy)
\nonumber
\\
&+ \varepsilon [ -2z dt\wedge dy + (12x^2 -2s) y dz\wedge dx - y dt \wedge dz - (12x^2 -2s)2z dx\wedge dy  
\nonumber
\\
&- x^2 dt \wedge ds -2yz ds \wedge dx + 2xz ds \wedge dy ]
\nonumber
\end{align}

\noindent Map:
$$f\colon (u,s,t,x,y,z) \mapsto (u, s, t, x^4 + sx^2 - tx + y^2 - z^2) $$

\noindent Differential:
$$
Df = \left(\begin{array}{cccccc}
	 1		         & 0               &   0             &       0        & 0            &       0                                                         \\
	      0             &  1		    &   0             &       0        & 0        &          0                                                      \\
	      0             &  0              & 1		       &       0        &  0           &         0                                                        \\
	      0             &   x^2             &    x            &      4x^3 + 2sx -t        & 2y            &  -2z                                                               \\
\end{array}\right) 
$$
Basis of vectors tangent to the fibre:
\begin{align}
v_1 &= \left( 0, 0, 0, \frac{2z}{4x^3 + 2xs -t}, 0, 1\right)
\nonumber
\\
v_2 &= \left( 0, 0 ,0 , \frac{2y}{4x^3 + 2xs -t} , -1, 0 \right)
\nonumber
\end{align}

\noindent Relevant terms of the 2-form when evaluated on $v_1$ and $v_2$ 
\begin{align}
\tilde{\omega} &=  \varepsilon(4x^3 + 2sx -t) (dt \wedge dx + dy \wedge dz) + 2y (dt \wedge dy - dx \wedge dz) - 2z (dt \wedge dz + dx \wedge dy)
\nonumber
\\
&+ \varepsilon [ (12x^2 -2s) y dz\wedge dx - (12x^2 -2s)2z dx\wedge dy  ]
\nonumber
\\
\nonumber
\\
&=  \varepsilon(4x^3 + 2sx -t) (dt \wedge dx + dy \wedge dz) + y[\varepsilon(12x^2 - 2s) + 2] (dt \wedge dy - dx \wedge dz) 
\nonumber
\\
&- 2z[\varepsilon(12x^2 -2s) + 1] (dt \wedge dz + dx \wedge dy)
\nonumber
\end{align}
\\
Evaluating $\omega(v_1, v_2)$, which amounts to evaluate $\tilde{\omega}(v_1, v_2)$, we obtain:

\begin{align}
\omega(v_1, v_2)  =& \tilde{\omega}(v_1, v_2) = 
\nonumber
\\
=& \frac{1}{4x^3 + 2sx - t} \left( \varepsilon (4x^3 + 2sx - t)^2 + 2y^2 [ \varepsilon(12x^2 - 2s) +2) + 4z^2(\varepsilon(12x^2 - 2s) + 1) \right)
\end{align}

By restricting $s$ to a sufficiently small interval around $0$ which can be scaled by $\varepsilon$, the previous expression remains positive. 

\noindent {\bf Butterfly:}
\\
\noindent Near-symplectic form:
\begin{align}
 \omega = & du\wedge ds + \varepsilon(5x^4 - 3ux^2 + 2sx - t) (dt \wedge dx + dy \wedge dz) + 2y (dt \wedge dy - dx \wedge dz) 
\nonumber
\\
&- 2z (dt \wedge dz + dx \wedge dy) - \varepsilon dt\wedge du + \varepsilon x^2 dt \wedge ds 
\nonumber
\\
&+\varepsilon[(-10 x^3 +3 ux -s ) (4z dx \wedge dy + 2y dx \wedge dz) + 3x^2 ( 2 du \wedge dz - y du\wedge dz ) 
\nonumber
\\
&+ y \, dt \wedge dz + 2z  \, dt \wedge dy - 2xy \, ds \wedge dz - 4xz \, ds \wedge dy]
\nonumber
\end{align}

\noindent Map:
$$f\colon (u,s,t,x,y,z) \mapsto (u, s, t, x^5 -ux^3 + sx^2 - tx + y^2 - z^2) $$

\noindent Differential:
$$
Df = \left(\begin{array}{cccccc}
	 1		         & 0               &   0             &       0        & 0            &       0                                                         \\
	      0             &  1		    &   0             &       0        & 0        &          0                                                      \\
	      0             &  0              & 1		       &       0        &  0           &         0                                                        \\
	      -x^3             &   x^2             &   - x            &    5x^4 -3ux^2 +2sx -t        & 2y            &  -2z                                                               \\
\end{array}\right) 
$$
Basis of vectors tangent to the fibre:
\begin{align}
v_1 &= \left( 0, 0, 0, \frac{2z}{5x^4 -3ux^2 +2sx -t}, 0, 1\right)
\nonumber
\\
v_2 &= \left( 0, 0 ,0 , \frac{2y}{5x^4 -3ux^2 +2sx -t} , -1, 0 \right)
\nonumber
\end{align}

\noindent Relevant terms of the 2-form when evaluated on $v_1$ and $v_2$ :
\begin{align}
\tilde{\omega} = \, &  \varepsilon(5x^4 - 3ux^2 + 2sx - t) (dt \wedge dx + dy \wedge dz) + 2y (dt \wedge dy - dx \wedge dz)
\nonumber\\
& \, - 2z (dt \wedge dz + dx \wedge dy) + \varepsilon[(-10 x^3 +3 ux -s ) (4z dx \wedge dy + 2y dx \wedge dz)]
\end{align}
\\
Let $W(x,t)=5x^4 - 3ux^2 + 2sx - t$. Evaluating $\omega(v_1, v_2)$, we obtain:
\begin{align}
\omega(v_1, v_2)  =& \tilde{\omega}(v_1, v_2) = 
\nonumber
\\
=& \frac{1}{W(x,t)} \left( \varepsilon (5x^4 - 3ux^2 + 2sx - t)^2 + 4y^2 + 4z^2 + \varepsilon(-10x^3 - 3ux -s)(4y^2 + 4z^2)   \right)
\nonumber
\\
=& \frac{1}{W(x,t)} [ \varepsilon (5x^4 - 3ux^2 + 2sx - t)^2 + 4y^2 (1+ \varepsilon(-10x^3 - 3ux -s))  
\nonumber
\\
&+ 4z^2 (1+ \varepsilon(-10x^3 - 3ux -s)) ]
\end{align}

All terms are always positive except possibly $\varepsilon(-10x^3 - 3ux -s)$. By restricting $u$ and $s$ around $0$ to a sufficiently small neighbourhood with $|x|<1$, and by choosing a sufficiently small $\varepsilon$, we can bound this term so that it is smaller than 1, so that the whole expression remains positive. 


\providecommand{\bysame}{\leavevmode\hbox to3em{\hrulefill}\thinspace}


\begin{thebibliography}{99}

\bibitem{ADK05} D. Auroux, S. K. Donaldson, L. Katzarkov, {\em  Singular Lefschetz pencils}, Geometry \& Topology Vol. 9 (2005) 1043 --1114.




 \bibitem{BW05} H. Bursztyn, A. Weinstein, {\em Poisson geometry and Morita equivalence,} Poisson geometry, deformation quantisation and group representations, 1--78, London Math. Soc. Lecture Note Ser., 323, Cambridge Univ. Press, Cambridge, 2005. 


\bibitem{CF03} M. Crainic, R.L. Fernandes, {\it
Integrability of Lie brackets,}
Ann. of Math. (2) 157 (2003), no. 2, 575--620.



\bibitem{CF04} \bysame , {\it
Integrability of Poisson brackets,}
J. Differential Geom. 66 (2004), no. 1, 71--137.

\bibitem{CK16} G. Cavalcanti, R. Klaasse, {\it Fibrations and log-symplectic structures},  ArXiv e-prints,  arXiv:1606.00156, (2016), 23 pp. 

\bibitem{Dam89} P. A. Damianou, {\em Nonlinear Poisson Brackets}. Ph.D. Dissertation, University of Arizona, 1989.

\bibitem{DamP12} P. A. Damianou, F. Petalidou, {\em Poisson Brackets with Prescribed Casimirs}. Canadian J. Math. Vol. 64 (5), (2012),
991--1018.

\bibitem{D99} S. K. Donaldson, {\em Lefschetz pencils on symplectic manifolds}, J. Differential Geom. Volume 53, Number 2 (1999), 205--236.

\bibitem{DZ05}  J.-P. Dufour, N.T. Zung, {\em Poisson structures and their normal forms}, Progress in Mathematics, 242. Birkh\"auser Verlag, Basel, 2005.

\bibitem{GSV15} L. Gac\'ia-Naranjo, P. Su\'arez-Serrato, R. Vera, {\em Poisson structures on smooth $4$--manifolds}, Lett. Math. Phys. Vol. 105, no. 11 (2015), 1533--1550.


\bibitem{Ho204} K. Honda, {\em Local properties of self-dual harmonic 2-forms on a 4-manifold},  J. reine angew. Math. 577 (2004), 105--116.

\bibitem{GG}  M. Golubitsky, V. Guillemin, {\it Stable mappings and their singularities}, Graduate Texts in Mathematics, Vol. 14. Springer-Verlag, New York-Heidelberg, 1973.


\bibitem{LGPV13} C. Laurent-Gengoux, A. Pichereau, P. Vanhaecke, {\em Poisson structures,} Grundlehren der Mathematischen Wissenschaften, 347. Springer, Heidelberg, 2013.

\bibitem{L09}Y. Lekili, {\em Wrinkled fibrations on near-symplectic manifolds}, Geometry \& Topology 13 (2009), 277--318.

\bibitem{M71} J. Mather, {\em Stability of $C^{\infty}$ mappings VI: The nice dimensions}, from: "Proceedings
of Liverpool Singularities Symposium I (1969-70)", Lecture Notes in Mathematics 192, Springer, Berlin (1971) 207--253.

\bibitem{P07} T. Perutz, {\em Zero-sets of near-symplectic forms}, Jour.  Symp. Geom.,  vol.4, no.3, (2007) 237--257.


\bibitem{ST16} P. Su\'arez-Serrato, J. Torres Orozco, {\em Poisson structures on wrinkled fibrations}, Bol. Soc. Mat. Mex. (3) 22 (2016), no. 1, 263--280.


\bibitem{V94} I. Vaisman, {\em Lectures on the Geometry of Poisson Manifolds}, Birkh\"auser, Basel, 1994.

\bibitem{V16} R. Vera, {\em Near-symplectic $2n$-manifolds}, Alg. Geom. Topol. 16 no.3 (2016), 1403--1426.

\bibitem{W83} A. Weinstein, {\em The local structure of Poisson manifolds,} J. Differential Geom. 18 (1983), no. 3, 523--557. 

\bibitem{W87} \bysame , {\em Symplectic groupoids and Poisson manifolds,} Bull. Amer. Math. Soc. (N.S.) 16 (1987), no. 1, 101--104.


\bibitem{X91} P. Xu, {\em Morita equivalence of Poisson manifolds,} Comm. Math. Phys. 142 (1991), no. 3, 493--509. 

\bibitem{X92} \bysame , {\em Morita equivalence and symplectic realizations of Poisson manifolds}, Ann. Sci. \'Ecole Norm. Sup. (4) 25 (1992), no. 3, 307--333.

\end{thebibliography}
\end{document}